\newcommand{\R}{\textnormal{I\kern-0.21emR}}
\newcommand{\N}{\textnormal{I\kern-0.21emN}}
\newcommand{\U}{\mathcal{U}_{L,M}}
\def\cV{{\mathcal V}}
\def\M{{\mathcal M}}
\def\Th{{\Theta}}
\renewcommand{\geq}{\geqslant}
\renewcommand{\leq}{\leqslant}
\def\e{{\varepsilon}}
\newcommand{\Jt}{\widetilde{J}}
\newcommand{\cJt}{\widetilde{\mathcal J}}
\newcommand{\Vt}{\widetilde{V}}
\newcommand{\cVt}{\widetilde{\mathcal{V}}}
\newcommand{\Tht}{\widetilde{\Theta}}
\renewcommand{\th}{\theta}
\DeclareMathOperator{\supp}{\mathop{\rm supp}}
\def\YYint#1#2#3{{\setbox0=\hbox{$#1{#2#3}{\iint}$}
    \vcenter{\hbox{$#2#3$}}\kern-.51\wd0}}
\newtheorem{theoremno}{Theorem}
\newtheorem{theorem}{\textbf{Theorem}}[section]
\newtheorem{remark}[theorem]{\textbf{Remark}}
\newtheorem{lemma}[theorem]{\textbf{Lemma}}
\newtheorem{proposition}[theorem]{\textbf{Proposition}}
\newtheorem{definition}[theorem]{\textbf{Definition}}
\numberwithin{equation}{section}
 \newcommandx{\unsure}[2][1=]{\todo[linecolor=red,backgroundcolor=red!25,bordercolor=red,#1]{#2}}
 \newcommandx{\change}[2][1=]{\todo[linecolor=blue,backgroundcolor=blue!25,bordercolor=blue,#1]{#2}}
 \newcommandx{\info}[2][1=]{\todo[linecolor=green,backgroundcolor=green!25,bordercolor=green,#1]{#2}}
 \newcommandx{\improvement}[2][1=]{\todo[linecolor=yellow,backgroundcolor=yellow!25,bordercolor=yellow,#1]{#2}}
  \newcommandx{\biblio}[2][1=]{\todo[linecolor=blue,backgroundcolor=magenta!25,bordercolor=blue,#1]{#2}}
\begin{document}
\title{The tragedy of the commons:
\\ A Mean-Field Game approach to the reversal of travelling waves}


\author{Ziad Kobeissi\footnote{Institut Louis Bachelier, Inria Paris, (\texttt{ziad.kobeissi@inria.fr})},    \\Idriss Mazari-Fouquer\footnote{CEREMADE, UMR CNRS 7534, Universit\'e Paris-Dauphine, Universit\'e PSL, Place du Mar\'echal De Lattre De Tassigny, 75775 Paris cedex 16, France, (\texttt{mazari@ceremade.dauphine.fr})},   \\Domenec Ruiz-Balet\footnote{Department of Mathematics, Imperial College, South Kensington, London, UK, (\texttt{d.ruiz-i-balet@imperial.ac.uk})}}

\maketitle

\begin{abstract}
The goal of this paper is to investigate an instance of the tragedy of the commons in spatially distributed harvesting games. The model we choose is that of a fishes' population that is governed by a parabolic bistable equation and that fishermen harvest. We assume that, when no fisherman is present, the fishes' population is invading (mathematically, there is an invading travelling front). Is it possible that fishermen, when acting selfishly, each in his or her own best interest, might lead to
a reversal of the travelling wave and, consequently, to an extinction of the global population? To answer this question, we model the behaviour of individual fishermen using a Mean Field Game approach, and we show that the answer is yes. We then show that, at least in some cases, if the fishermen coordinated instead of acting selfishly, each of them could make more benefit,  while still guaranteeing the survival of the population. Our study is illustrated by several numerical simulations.
\end{abstract}

\noindent\textbf{Keywords:} Mean Field Games,  Optimal Control, tragedy of the commons, Travelling Waves.

\medskip

\noindent\textbf{AMS classification (MSC2020): }35C07, 35K10, 35K57, 35Q89, 49N80 .
\paragraph{Acknowledgement:} This work was started during a stay of D. Ruiz-Balet in CEREMADE, Paris Dauphine Universit\'e PSL, and D. Ruiz-Balet gratefully acknowledges the support of IRL Short-Term Exchange program through the grant STEG2201DR, which made this stay possible. D. Ruiz-Balet was supported by the UK Engineering and Physical Sciences Research Council (EPSRC) grant EP/T024429/1. I. Mazari-Fouquer was supported by the French ANR Project ANR-18-CE40-0013 - SHAPO on Shape Optimization.

\section{Introduction}
\subsection{Scope of the paper}

The preservation of biodiversity is one of the major scientific, political and economical challenges of our present time, as the drive to ensure the survival of a population is often at odds with economic goals \cite{BBC2,costello2012status,davies2012extinction,hamilton2001outport,pikitch2012risks,pinsky2011unexpected}. 
 A typical example of such a situation is the management of fisheries \cite{BBC50,BBCwaste,costello2012status,worm2012future,Worm_2006}, which is the central theme of this paper. Indeed, it is by now well-known that overfishing plays a great role in the collapse of biodiversity in oceans, which in turn leads to dramatic ecological consequences: for instance, the water quality and the recovery potential of ecosystems have been shown to be intimately tied to biodiversity \cite{Worm_2006}. This is part of the drive for the COP15 to include the restoration and preservation of at least 30\% of the ocean biodiversity in their objectives' list.

Our goal, in this paper, is to present a mathematical analysis of the ubiquitous \emph{tragedy of the commons} from the point of view of spatially distributed harvesting games, using a travelling wave approach. To clarify our terms, first recall that the tragedy of the commons \cite{hardin2009tragedy}, a term coined by W.F. Lloyd \cite{Lloyd_1833}, is a general principle of economics that reads:
\begin{center}
\emph{The action of selfish players, each playing so as to maximise his/her outcome from a common resources, will eventually lead to the depletion of this
resources.}\end{center} Our context (see \eqref{Eq:MFG} for a precise model) is thus the following: we consider a population of fishes that evolves according to a standard reaction-diffusion equation of bistable type, and a large population of $N\gg 1$ fishermen, each acting so as to maximise his/her fishing output. Bistable equations typically admit an extinction state (0 for simplicity), an invasion state (1 for notational convenience) and an intermediate, unstable equilibrium state. Our goal is to show that, when acting in an uncoordinated fashion, the resulting action of fishermen will lead to the extinction of the fishes' population.  While there are many ways to quantify the extinction or survival of a species in reaction-diffusion models \cite{CantrellCosner}, we adopt the point of view of \emph{invading travelling waves}. Since their introduction in the seminal works of Fisher \cite{Fisher} and, independently, of Kolmogorov, Petrovskii and Piskounov \cite{KPP}, travelling waves have proved to be a very convenient framework for the qualitative understanding of population dynamics.

Roughly speaking, if we work in $\R$, the typical population dynamics equation is of the form
\[ \partial_t u-\partial^2_{xx}u=f(u)-mu.\] Here, $u$ is the population density, $f$ is a specific reaction term, and $m$ is the density of fishermen. The term $-mu$ corresponds to the harvesting of fishes. When $m\equiv 0$ (\emph{i.e.} in the absence of fishermen) a \emph{travelling front} at speed $c\in \R$ is a solution $u$ that writes 
\[ u(t,x)=U(x-ct)\text{ with }\begin{cases}-cU'-U''=f(U)&\text{ in }\R\,, 
\\ U(-\infty)=0\,, U(+\infty)=1.\end{cases}\]
If a travelling wave (TW) solution exists with a \emph{negative} speed $c<0$, it means that the population will typically survive in this scenario (``1 is invading 0'') and we thus dub it \emph{invading travelling wave}. If, on the other hand, there exists a TW solution with a \emph{positive} speed $c>0$, this means that the population will go extinct (``0 is extinguishing 1'') and we refer to it as an \emph{extinction travelling wave}. travelling waves have been the subject of such an intense research activity that it would be pointless to try and give an exhaustive bibliography. We refer to section~\ref{subsec:related_works} for more details. 
 
Depending on $f$, an infinity or a single travelling wave solution might exist. Since we want to exemplify the way fishing can lead to extinction, we focus in this article on the case of a bistable non-linearity (typically $f:[0;1]\ni u\mapsto u(u-\eta)(1-u)$); in this case, it is well known \cite{FifeMcLeod} that there exists a \emph{unique} travelling wave that can be invading or extinguishing. 
Our question then becomes:
\begin{center}{
Assume that in the absence of fishermen there only exists one  travelling wave, which in addition is invading. Is it possible that adding fishermen will generate an extinction travelling wave?}
\end{center}
This question was answered (among others) in \cite{Bressan} (see section~\ref{subsec:related_works}) but is not fully satisfactory for our needs; for indeed, it is not clear whether or not a strategy leading to the depletion of the fishes' population is optimal from a fisherman's perspective. Our real central question is then 
\begin{center}\emph{
Assume that in the absence of fishermen there only exists one travelling wave, which in addition is invading. Is it possible that adding fishermen that act in their own best interest will generate an extinction travelling wave?}
\end{center}
We provide, in a variety of situations, a \emph{positive} answer to this question.

Keeping in mind that we use reaction-diffusion equations and travelling waves to model population dynamics, we need to settle on a paradigm to implement the behaviour of the fishermen. In the present paper, we choose a Mean-Field Game (MFG) approach which has proved a very efficient tool in the modelling of agents motivated by their own self interest. 

Mean-Field Game models have, since their introduction by Lasry \& Lions
\cite{MR2269875,MR2271747,MR2295621}
and, independently by Huang, Caines \& Malhamé
\cite{MR2352434,MR2346927}
been at the center of various fields of applied mathematics and have been a major subject of investigation from the optimisation and PDE communities. While our paper is not the first one to tackle the interplay between MFG and travelling waves \cite{Burger_2017,Papanicolaou_2021,Porretta_2022,Qin_2019}, both our problem and approach are very different from theirs; we refer to section~\ref{subsec:related_works}. In particular, this article is, to the best of our knowledge, the first to blend travelling waves, Mean-Field Games and the tragedy of the commons. 

Our conclusions account for the fact that, when fishermen are only motivated by their own selfish interest, their cumulative action might lead to the extinction of the fishes' population. Let us highlight that we also cover an interesting aspect of this model: when fishermen \emph{coordinate}, not only does their cumulative action not extinguish the fishes' population, but all individual benefits are actually higher than when they compete. This further reinforces our point that our ``competition leads to extinction" type of results can be seen as illustrating the tragedy of the commons. Let us also note that although we mostly use, in the course of this article, fisheries-related vocabulary (\emph{e.g.} fishermen), the paradigm we exemplify remains valid in a variety of other harvesting settings (\emph{e.g.} logging, deforestation).

Finally, we want to emphasise that, while we describe and analyse a wide range of situations, several crucial questions remain open. Indeed, our results point at the \emph{existence} of extinction travelling waves. In spatial ecology, the existence of such fronts is usually complemented with a detailed analysis of their \emph{stability} \cite{FifeMcLeod}. In other words: is it true that for ``reasonable" initial conditions, the solutions of the equation ``look like" this front for large times? Answering this question would be an important step in the qualitative analysis of fishing problems, but is at the moment out of reach. We refer to the conclusion for some open problems.

\subsection{Mathematical set-up}

\paragraph{Bistable equations: setting and classical results in the absence of fishermen}
We begin with the basic definition:
\begin{definition}[Bistable non-linearity]\label{DE:BNL}
    A function $f$ is called \emph{bistable} if it satisfies:
\begin{enumerate}
\item $f\in \mathscr C^1([0,1];\R)$,
\item $f$ has exactly three roots in $[0;1]$: $0$, $1$ and some $\eta\in (0;1)$,
\item $f'(0)\,, f'(1)<0$ and $f'(\eta)>0$.
\end{enumerate}
\end{definition}Bistable nonlinearities are of particular importance to model the Allee effect \cite{Perthame}.
The most standard example of such a bistable non-linearity is the following: fix a parameter $\eta\in (0;1)$ and define 
\begin{equation*}\label{Eq:Al} f:[0;1] \ni u\mapsto u(u-\eta)(1-u).\end{equation*} While we will work under mild assumptions on $f$, non-linearities of the latter type are important to keep in mind.

Consider, for a bistable non-linearity $f$, the associated bistable equation
\begin{equation}\label{Eq:Main2} \partial_t u-\partial^2_{xx} u
    =f(u)\text{ in }[0;\infty)\times \R,\end{equation} where the initial condition is willingly omitted. We have the following definition:
    
\begin{definition}[Travelling wave solution]\label{DE:TW}
A travelling wave solution of \eqref{Eq:Main2} is a couple $(U,c)$, with $c\in \R$ and where $U$ satisfies
\begin{equation}\label{TW}\begin{cases}
-U''-cU'=f(U)&\text{ in }\R\,, 
\\ U(\infty)=0\,, U(+\infty)=1.\end{cases}\end{equation}
If $c>0$, the wave is dubbed ``extinction travelling wave'' or ``extinction front" while, if $c<0$, the wave is dubbed ``invading travelling wave'' or ``invading front".
\end{definition}
If $(U,c)$ meets the condition of Definition~\ref{DE:TW}, then $u:(t,x)\mapsto U(x-ct)$ is a solution of \eqref{Eq:Main2}.
Starting with \cite{Fisher,KPP}, travelling waves have been a central feature of mathematical biology. The seminal work of Fife \& MacLeod \cite{FifeMcLeod} reviews various results related to the existence of  travelling waves solutions and investigates their stability properties. Roughly speaking, solutions of \eqref{Eq:Main2} with an initial condition $0\leq u_0\leq 1$, $u_0(-\infty)<\eta\,, u_0(\infty)>\eta$ are, in the limit $t\to \infty$, similar to travelling fronts \cite[Theorem 3.1]{FifeMcLeod}.  One of the foundational results in the field is the following theorem:
\begin{theoremno}\cite[Theorem 3.2]{FifeMcLeod}
    \label{Th:WithoutFisherman}
Let $f$ be a bistable non-linearity, as defined in Definition~\ref{DE:BNL}, and assume that 
\begin{equation}
    \label{Eq:invasive_BNL}
    \int_0^1 f(s)ds>0.
\end{equation}
Then there exists (up to translation) a unique travelling wave solution $(U^*,c^*_{TW,f})$ of \eqref{Eq:Main2}.
Furthermore,
this solution is an invasion front
\emph{i.e.} $c^*_{TW,f}<0$. Finally, for any initial condition $u_0$ such that 
\[ \lim_{s\to -\infty} u_0\in[0;\eta)\,, \lim_{s \to +\infty} u_0\in (\eta;1],\] there exists $x_0\in \R\,, w>0$ such that 
\[ \sup_{x \in \R} | u(t,x)-U^*(x-ct-x_0)|=\underset{t\to \infty}O (e^{-wt}).\]

\end{theoremno}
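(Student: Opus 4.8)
The plan is to establish the four assertions of the statement—existence, uniqueness up to translation, negativity of the speed, and exponential stability—along the classical lines of Fife \& McLeod, combining phase‑plane analysis, an energy identity, the sliding method, and a squeezing argument. I describe each in turn.

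\textit{Existence and monotonicity (phase plane).} First I would rewrite \eqref{TW} as the planar system $U'=V$, $V'=-cV-f(U)$. Since $f'(0),f'(1)<0$, the characteristic equation $\lambda^2+c\lambda+f'(u_*)=0$ at $u_*=0$ and $u_*=1$ has one positive and one negative root, so $(0,0)$ and $(1,0)$ are saddles; a travelling wave is precisely a heteroclinic orbit joining them. I would obtain one by shooting in the parameter $c$: follow the branch of the unstable manifold of $(0,0)$ entering $\{U>0,\,V>0\}$ and track, as a function of $c$, whether it reaches $U=1$ and, if so, whether $V$ is still positive there or has already turned back. A comparison of the vector fields for two values of $c$ shows this branch depends monotonically on $c$, which isolates a unique threshold $c^*$ for which the orbit converges to $(1,0)$; for that $c^*$ the orbit stays in $\{0<U<1,\,V>0\}$, giving a strictly increasing profile $U^*{}'>0$ with exponential approach to the two saddles.

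\textit{Sign of the speed.} Granting a monotone profile $U^*$, multiply the identity $U''+cU'+f(U)=0$ by $U'$ and integrate over $\R$. The exponential decay of $U^*{}'$ at $\pm\infty$ kills the boundary term $\tfrac12[(U')^2]_{-\infty}^{+\infty}$, while $\int_{\R} f(U^*)U^*{}'\,dx=\int_0^1 f$, so that $c^*\int_{\R}(U^*{}')^2\,dx=-\int_0^1 f(s)\,ds$. Hypothesis \eqref{Eq:invasive_BNL} then forces $c^*<0$, i.e.\ the wave is an invasion front. As a bonus this identity shows the speed of any monotone wave is uniquely determined.

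\textit{Uniqueness up to translation.} Given two travelling waves $(U_1,c_1)$ and $(U_2,c_2)$, I would run the sliding method: comparing $U_1$ with the family $U_2(\cdot+\tau)$, the exponential behaviour at the ends together with the strong maximum principle/Hopf lemma show that $\{\tau:\,U_1\le U_2(\cdot+\tau)\}$ is a closed half-line; at its endpoint the profiles must touch, hence coincide, which simultaneously forces $c_1=c_2$ (the energy identity already pinning down the common speed).

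\textit{Exponential stability (the main obstacle).} For $u_0$ with $\limsup_{-\infty}u_0<\eta$ and $\liminf_{+\infty}u_0>\eta$, one first shows (as in \cite[Theorem 3.1]{FifeMcLeod}) that after a finite time the solution enters a region where it can be trapped between moving barriers; then one builds a super- and a subsolution of \eqref{Eq:Main2} of the form $w_\pm(t,x)=U^*\bigl(x-c^*t\pm\xi(t)\bigr)\pm q(t)$, with $q(t)=q_0e^{-wt}$ and $\xi'\propto q$ bounded. The verification that $w_\pm$ are indeed super/subsolutions rests on the fact that $f'(0),f'(1)<0$ provide a uniform spectral gap near the two stable states, producing the rate $w>0$ and dominating the error terms, while the monotonicity $U^*{}'>0$ absorbs the shift terms. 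The comparison principle then yields $w_-(t,\cdot)\le u(t,\cdot)\le w_+(t,\cdot)$, and since $\xi(t)\to x_0$ and $q(t)\to 0$, letting $t\to\infty$ gives $\sup_{x\in\R}|u(t,x)-U^*(x-c^*t-x_0)|=O(e^{-wt})$. The delicate point—the crux of the whole argument—is the bookkeeping in this last step: choosing $\xi$, $q$, $w$ and the cut-offs between the ``near $0$'', ``near $1$'' and ``intermediate'' zones so that the differential inequality holds uniformly in $x$; everything else reduces to phase‑plane analysis and the maximum principle.
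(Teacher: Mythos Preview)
The paper does not prove this statement at all: Theorem~\ref{Th:WithoutFisherman} is quoted verbatim from \cite[Theorem 3.2]{FifeMcLeod} as background, and no proof (not even a sketch) is supplied in the paper. Your proposal is a faithful outline of the classical Fife--McLeod argument---phase-plane shooting for existence, the energy identity $c^*\int_\R (U^*{}')^2=-\int_0^1 f$ for the sign of the speed, sliding for uniqueness, and the squeezing between sub/supersolutions $U^*(x-c^*t\pm\xi(t))\pm q(t)$ for exponential convergence---so there is nothing to compare against beyond noting that you have reproduced the standard proof the paper is citing.
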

As outlined in the introduction, our purpose is to understand the influence of fishing on population dynamics through the lens of travelling fronts, using a mean field game approach.

\paragraph{Mean Field Game} We consider a fixed bistable nonlinearity $f$.
For a given density of fishermen $m=m(t,x)$, the fishes' population solves
\begin{equation*}\label{Eq:Main3}
\partial_t \theta-\partial^2_{xx}\theta=f(\theta)-m\theta.
\end{equation*}
To take into account the fact that fishermen follow an ``optimal'' strategy, we follow the seminal \cite{MR2295621,Lions_video} and we assume that the density $m=m(t,x)$ is the limit of a population of $N\gg1$ individual fishermen (we refer to \cite{Cardaliaguet_2019} for a study of the convergence of the model as $N\to \infty$). Each of them tries to optimise her/his output by controlling her/his displacement; this is modelled by saying that the representative fisherman is described \emph{via} the ODE
\[ x'=\alpha\] where $x$ is the position of the fisherman and $\alpha$ is the control. The resulting density of players follows the continuity equation 
\begin{equation}\label{Eq:Continuity} \partial_t m+\partial_x(\alpha m)=0.\end{equation}
Starting in an initial position $x_0$, the player seeks to optimise his/her fishing output. This output depends on:
\begin{enumerate}
\item A discount factor $\lambda>0$,
\item The average selling price of the fish, which we normalise to be equal to 1,
{\item The cost of the control, which is enforced through a Lagrangian $L$.}
\end{enumerate}
Overall, for a fixed fishes' population $\theta$,
an individual fisherman starting at position $x_0$ solves the optimisation problem
\begin{equation}\label{Eq:OCPlayer} \sup_{\alpha \in L^\infty(\R)} J(x_0,\alpha,\th):=\int_0^\infty e^{-\lambda t}\left(\theta(t,x_{\alpha}(t))-L(\alpha)\right)dt\text{ with }\begin{cases}x_\alpha'(t)=\alpha(t)\,,\\ x_\alpha(0)=x_0.\end{cases}\end{equation} 
When there is no ambiguity in the choice of $\th$,
we allow ourselves to omit the dependence in $\th$ is $J$
and we simply write $J(x_0,\alpha)$.

The assumptions on the Lagrangian $L$ are standard:
\begin{equation}\tag{$\bold{H}_L$}\label{Hyp:L}
\begin{cases}
\text{$L$ is superlinear:  } \underset{|\alpha| \to \infty}\lim\frac{L(\alpha)}{1+|\alpha|}=\infty\\ \text{ $L$ is $\mathscr C^1$ in $\alpha$, and strictly convex in $\alpha\in \R$},
\\ \text{$0$ is the unique global minimum of $L$ (up to an additive constant we take $L(0)=0$)}
\end{cases}\end{equation}A typical example of a Lagrangian $L$ satisfying \eqref{Hyp:L} is
$L(\alpha)=|\alpha|^q\,, q>1$.
The value function $V$ is defined as
\begin{equation}\label{Eq:DefV}V:(t_0,x_0)\mapsto \sup_{\alpha\in L^\infty((t_0;\infty))}
    \left(\int_{t_0}^\infty e^{-\lambda (t-t_0)}\left(\theta(t,x_{\alpha}(t))
            -L(\alpha)\right)dt\right)\text{ with }
    \begin{cases}x_\alpha'(t)=\alpha(t)\,,\\ x_\alpha(t_0)=x_0.\end{cases} \end{equation}
Under regularity assumptions on the optimal control a standard application of the Bellman dynamic programming principle
\cite[Chapter I, Section 2]{Bardi_1997} shows that
the value function 
$V$
is the unique viscosity solution of the Hamilton-Jacobi-Bellman (HJB) equation
\begin{equation*}\label{Eq:HJBCompet}\begin{cases}
\lambda V-\partial_t V-H(\partial_xV)=\theta\text{ in }(0;\infty)\times \R\,,\\ V(+\infty,\cdot)\equiv 0,\end{cases}\end{equation*}
where the Hamiltonian $H$ is defined as the Legendre transform of $L$:
\[ H(p):=\sup_{\alpha \in \R}\left( p\alpha-L(\alpha)\right).\]
Provided \eqref{Hyp:L} is satisfied,
$H$ has a locally bounded derivative.
In this case, there exists a unique optimal control
reaching the maximum in the definition of $V$ (Eq. \eqref{Eq:DefV}),
moreover it admits a feedback form
given by 
\begin{equation}
    \label{Eq:opt_cont}
    \alpha^*(x)=H'(\partial_xV(x)).
\end{equation}
The fishermen' density $m$  is then a distributional solution of the continuity equation
\begin{equation}\label{Eq:FPKCompet}
\begin{cases}
\partial_tm+\partial_x\left(H'(\partial_x V) m\right)=0\text{ in }(0;\infty)\times \R\,, \\m(0,\cdot)=m_0.\end{cases}
\end{equation}
\begin{remark}
At this stage, $m$ could very well be a measure in space. However, we will show (Lemma~\ref{Le:NoDirac}) that for the travelling wave solutions we look for, the fishermen's density $m$ should not have atoms. Our proofs will be constructive, and this fact is one of the motivations between building possible $m$ as an $L^1$ function.
\end{remark}
With some initial conditions $\theta_0$ and $m_0$,
adding a terminal time horizon $T>0$ and a terminal cost
$V_T:\R\rightarrow\R$ for convenience,
we are left with the MFG system:
\begin{equation}\label{Eq:MFG}
\begin{cases}
\lambda V-\partial_t V-H(\partial_xV)=\theta&\text{ in }(0;T)\times \R\,,
\\ V(T,\cdot)=V_T,
\\\partial_tm+\partial_x\left(H'(\partial_x V) m\right)=0&\text{ in }(0;T)\times \R\,, \\m(0,\cdot)=m_0,
\\ \partial_t \theta- \partial_{xx}^2\theta=f(\theta)-m\theta&\text{ in }(0;T)\times \R\,, 
\\ \theta(0,\cdot)=\theta_0.
\end{cases}
\end{equation}

\subsection{Reversed travelling waves: first discussion}
\paragraph{Definition of a reversed MFG travelling wave}
We now come to the heart of the matter by defining the notion of reversed travelling wave for \eqref{Eq:MFG}. Throughout we work with a bistable nonlinearity $f$ that satisfies \eqref{Eq:invasive_BNL}.
\begin{definition}[Reversed MFG travelling wave]\label{De:ReversedMFGTW}
    We say that 
    $(\lambda,c,\Th,\mathcal M,\mathcal V)$
    is a \emph{reversed MFG travelling wave} solution of \eqref{Eq:MFG} if    $c>0$,
    $\displaystyle{\lim_{s\to-\infty}}\Th(s)=0$,
    $\displaystyle{\lim_{s\to+\infty}}\Th(s)=1$, $\Th>0$ in $\R$, if $(\Th,\M)$ satisfies
    \[ -\Th''-c\Th'=f(\Th)-\M\Th \text{ in }\R\]
    and if the constant control $\overline\alpha\equiv c$ is optimal in \eqref{Eq:DefV}
    where $\theta:(t,x)\mapsto \Th(x-ct)$, for any $x_0\in \supp(\M)$.
\end{definition}
In particular, this implies that, defining
    \[ V:(t,x)\mapsto \cV(x-ct)\,, m:(t,x)\mapsto \M(x-ct),\]
    the triplet $(\theta,m,V)$ is a solution of \eqref{Eq:MFG} with
    $m_0=\M\,, \theta_0=\Th\,, V_T=\cV$.

The wording ``reversed'' comes from the positivity condition $c>0$,
which means that we are in the presence of an extinction front;
this is to be contrasted with the results recalled in  Theorem~\ref{Th:WithoutFisherman}.

If
$(c,\Th,\mathcal M,\mathcal V)$ is a reversed MFG travelling wave, the change of variables $s=x-ct$ shows that $(\Th,\M,\mathcal V)$ solves the stationary system
\begin{equation}
    \label{Eq:MFG_s}
    \left\{
    \begin{aligned}
        &\lambda\cV(s)
        +c\cV'(s)
        -H(\cV'(s))
        =
        \Th(s)
        \text{ on }\R,
        \\
        &-\Th''(s)-c\Th'(s)
        =
        f(\Th(s))-\M(s)\Th(s)
        \text{ on }\R,
        \\
        &H'(\cV'(s))
        =
        c
        \text{ in supp}(\M),
    \end{aligned}
    \right..
\end{equation}
The variable $s$
is called the
\emph{similarity variable}.
Observe that the continuity equation in \eqref{Eq:MFG} has been replaced by the condition that the optimal control of a representative fisherman (that is, a solution of \eqref{Eq:opt_cont}) is equal to $c$, the velocity of the travelling wave. Moreover, $\cV$ is indeed the value function of the control problem expressed in similarity variables as
\begin{equation}\label{Eq:DefVSV}
    \begin{aligned}
        &\cV(s_0)
        =
        \sup_{\alpha\in L^\infty(\R)}
        \mathcal{J}(\alpha,s_0,\Th)\text{ where}
        \\
        &\mathcal{J}(\alpha,s_0,\Th)
        =
        \left(\int_{0}^\infty e^{-\lambda t}\left(\Th(s_{\alpha}(t))
                -L(\alpha)\right)dt\right)
        \text{ with }
        \begin{cases}
            s_\alpha'(t)=\alpha(s(t))-c\,,
            \\
            s_\alpha(t_0)=s_0,\end{cases}
    \end{aligned}
\end{equation}
where the supremum is directly taken
over the control function in feedback form (as a consequence of \eqref{Eq:opt_cont}).
When there is no ambiguity in the choice of $\Th$,
we allow ourselves to omit the dependence in $\Th$ is $J$
and we simply write $\mathcal{J}(x_0,\alpha)$.

The next paragraphs give some \emph{a priori} restrictions on reversed MFG travelling waves; since our approach in the rest of the paper is constructive, this helps us justify our forthcoming constructions.

\paragraph{Preliminary comments on reversed MFG travelling waves}
\begin{lemma}\label{Le:NoDirac}
    Assume that $L$ satisfies \eqref{Hyp:L}.
    If $(\lambda,c,\Th,\M,\cV)$ is
    a solution to \eqref{Eq:MFG_s},
    $\M$ does not have atoms. 
\end{lemma}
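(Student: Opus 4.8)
The plan is to argue by contradiction: suppose $\M$ has an atom at some point $s_* \in \R$, say $\M = \beta \delta_{s_*} + \M_{\mathrm{rest}}$ with $\beta > 0$. The key observation is that the defining property of a reversed MFG travelling wave forces the constant control $\overline\alpha \equiv c$ to be optimal at every $x_0 \in \supp(\M)$, in particular at $s_*$. First I would use the ODE satisfied by $\Th$, namely $-\Th'' - c\Th' = f(\Th) - \M\Th$, to extract the effect of the atom on the regularity of $\Th$: an atom $\beta\delta_{s_*}$ in the right-hand side means $\Th''$ has a Dirac mass there, hence $\Th'$ has a jump discontinuity at $s_*$, with jump $[\Th'](s_*) = \beta\,\Th(s_*)$ (more precisely $\Th'(s_*^+) - \Th'(s_*^-) = \beta\,\Th(s_*) > 0$ since $\Th > 0$). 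So $\Th$ remains continuous but has a corner at $s_*$, with its slope jumping \emph{upward} by a positive amount.

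Next I would analyze the optimal control problem in similarity variables \eqref{Eq:DefVSV} near $s_*$. Since $\overline\alpha \equiv c$ is claimed optimal, the corresponding trajectory is $s_{\overline\alpha}(t) = s_*$ for all $t$ (the fisherman stays put at $s_*$ in the moving frame), giving payoff $\cV(s_*) = \Th(s_*)/\lambda$. I would then exhibit a competitor. Intuitively, because $\Th$ is increasing in a neighborhood of $s_*$ on at least one side (in fact the upward slope jump makes the right derivative strictly larger), the fisherman can do strictly better by drifting slightly to the right: choose a control that moves $s$ to the right at small speed, paying a Lagrangian cost that is quadratically small (by \eqref{Hyp:L}, since $L(0)=0$ is the strict minimum and $L$ is $C^1$, $L(\overline\alpha + \delta) - L(c) = o(1)$ and in fact the marginal cost is controlled), while gaining a payoff increment that is \emph{linear} in the displacement because $\Th$ has a strictly positive one-sided derivative at $s_*$. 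A first-variation / Pontryagin-type computation: the optimality of $\overline\alpha \equiv c$ together with the feedback relation $H'(\cV'(s_*)) = c$ forces $\cV'(s_*) = L'(c)$, and the Euler–Lagrange structure of the problem would require the costate to match $\Th'$ at $s_*$; but $\Th'$ is not well-defined at $s_*$ (it has two distinct one-sided values), and a careful comparison shows the interior optimality condition cannot hold simultaneously with staying at a corner where the slope jumps up. Concretely, I would show $\cV$ must satisfy $\lambda \cV + c\cV' - H(\cV') = \Th$ with $\cV$ at least $C^1$ (value functions here are locally semiconcave/Lipschitz and the HJB equation with continuous right-hand side upgrades regularity), and then differentiate: along $\supp(\M)$ where $H'(\cV') = c$, one gets $\lambda \cV' = \Th'$, which is impossible at $s_*$ since the left-hand side is single-valued (or at worst has a jump matching $\cV''$) while the forced jump in $\Th'$ is strictly positive and incompatible with the sign of the jump $\cV$ could produce given $H' \circ \cV' \equiv c$ is locally constant there.

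I expect the main obstacle to be making the last comparison fully rigorous at the level of viscosity solutions, since $\cV$ is only known a priori to be a viscosity solution of the HJB equation and one must be careful about one-sided derivatives at the corner $s_*$. The cleanest route is probably: (i) show $\cV$ is locally Lipschitz and semiconcave (standard for such infinite-horizon problems with bounded, continuous running cost $\Th$), hence differentiable a.e. and possessing one-sided derivatives everywhere; (ii) on an open neighborhood of $s_*$ intersected with $\supp(\M)$ — or, if $s_*$ is isolated in $\supp(\M)$, just at $s_*$ — use the optimality of $\overline\alpha\equiv c$ directly by plugging in the perturbed control and computing the payoff difference to first order in the perturbation size $\delta$, obtaining a term $\sim \delta \cdot \Th'(s_*^+) / \lambda > 0$ against a cost $\sim \delta \cdot (L'(c) - \cV'(s_*^+))$ or $o(\delta)$, and checking the signs force a strict improvement — contradicting optimality. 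The positivity $\Th > 0$ and the bistable/invasion hypotheses are used only to guarantee $\Th(s_*) > 0$ so the slope jump is genuinely positive; the superlinearity and strict convexity of $L$ in \eqref{Hyp:L} are what make the marginal control cost negligible compared to the linear payoff gain. This contradiction establishes that $\M$ has no atoms.
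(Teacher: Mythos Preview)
Your overall strategy matches the paper's: argue by contradiction, extract from the ODE the upward jump $\Th'(s_*^+)-\Th'(s_*^-)=\beta\,\Th(s_*)>0$, and contradict the optimality of $\overline\alpha\equiv c$ at $s_*$. The setup is fine; the gap is in the core step where you claim a one-sided perturbation gives a strict improvement.

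Your assertion that the Lagrangian cost of perturbing $\overline\alpha=c$ is ``quadratically small'' or $o(\delta)$ is wrong: $L(c+\delta)-L(c)=L'(c)\,\delta+O(\delta^2)$ with $L'(c)>0$, since $c>0$ and $0$ is the unique minimiser of the strictly convex $L$. Both the payoff gain and the cost are therefore \emph{linear} in $\delta$, and drifting to the right improves the payoff only if $\Th'(s_*^+)>\lambda L'(c)$, which you have no way of knowing a priori. So the single-direction argument does not close.

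The paper's fix is to use \emph{both} one-sided first variations. Compute the Gateaux derivative of $\alpha\mapsto\mathcal J(\alpha,s_*)$ at $\overline\alpha$ in the constant directions $h_+\equiv+1$ and $h_-\equiv-1$; optimality forces each to be $\leq 0$:
\[
\dot{\mathcal J}(\overline\alpha,s_*)[h_+]=\lambda^{-2}\bigl(\Th'(s_*^+)-\lambda L'(c)\bigr)\leq 0,
\qquad
\dot{\mathcal J}(\overline\alpha,s_*)[h_-]=-\lambda^{-2}\bigl(\Th'(s_*^-)-\lambda L'(c)\bigr)\leq 0.
\]
Adding these gives $\Th'(s_*^+)-\Th'(s_*^-)\leq 0$, contradicting the upward jump. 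Note that $\lambda L'(c)$ is precisely the linear cost term you mis-estimated; it cancels only when both inequalities are combined. Your HJB-differentiation idea ($\lambda\cV'=\Th'$ on $\supp(\M)$) is a differential repackaging of the same pair of inequalities, but it needs $s_*$ to lie in the interior of $\supp(\M)$ and $\cV$ to be twice differentiable there; the direct two-sided variation avoids all viscosity/semiconcavity technicalities and works even if $s_*$ is an isolated atom.
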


\begin{remark}[Comparison with \cite{Bressan}]
It is apparent with this result that the MFG setting under consideration here differs significantly from the situation considered in \cite{Bressan}, where the goal is to find a harvesting strategy that extinguishes the population; indeed, it is shown in \cite{Bressan} that Dirac-type fishermen density $m$ are in some sense optimal (among controls leading to reversed travelling waves) to kill off the fishes, in sharp contrast with the conclusion of Lemma~\ref{Le:NoDirac}.
\end{remark}

\begin{remark}
That $\M$ does not have atoms does not necessarily mean that $\M$ is an $L^1$ function; however, Lemma~\ref{Le:NoDirac} is a strong motivation for us to look for $\M$ as an $L^1$ function (or $L^1_{\mathrm{loc}}$ when dealing with the periodic case).
\end{remark}

\begin{proof}[Proof of Lemma~\ref{Le:NoDirac}]
    Assume by contradiction
    that $\M$ has
    an atom at $s_0\in\R$: $\M(\{s_0\})>0$.
    From \eqref{Eq:MFG_s} it follows that $\Th'$ has a jump at $s_0$:  \begin{equation*}
        \Th'(s_0^-)
        <
        \Th'(s_0^+).
    \end{equation*}
    By definition, for any $s\in \supp(\M)$, $\overline\alpha \equiv c$
    is optimal in \eqref{Eq:DefVSV}.
    Let  $h_+\equiv 1\in L^\infty(\R)$ and
    $\dot{\mathcal{J}}(\overline\alpha,s_0)[h]$ 
    be the first order Gateaux derivative of
    $\alpha\mapsto \mathcal{J}(\alpha,s_0)$ at $\overline \alpha$ in the direction $h_+$. By straightforward computations and from the optimality of $\overline \alpha$ we obtain
    \begin{equation*}\label{Eq:DJ}
    \dot{\mathcal{J}}(\overline \alpha,s_0)[h_+]=\int_0^\infty e^{-\lambda t}\left(t\Th'(s_0^+)-L'(c)\right)dt=\lambda^{-2}
        (\Th'(s_0^+)
        -\lambda L'(c))\leq 0.
    \end{equation*} 
    Similarly, the perturbation in the direction $h_-\equiv -1$ yields 
     \begin{equation*}\label{Eq:DJ2}
\dot{\mathcal{J}}(\overline\alpha,s_0)[h_-]
        =
        \int_0^{\infty}
        e^{-\lambda t}
        (-t\Th'(s_0^-)
        +L'(c))\,dt
        =
        -\lambda^{-2}
        (\Th'(s_0^-)
        -\lambda L'(c))\leq 0.
    \end{equation*} Summing the two latte inequalities yields
    \[\Th'(s_0^+)-\Th'(s_0^-)\leq 0,\] a contradiction.
\end{proof}
We now move on to a second comment, this one related to to the link between the support of $\M$ and the (non-)monotonicity of $\Th$, as well as to the behaviour of $\Th$ in $\mathrm{supp}(\M)$. Indeed, several of our theorems are devoted to the construction of travelling waves that feature a monotonous $\Th$, while the case of periodic or non-monotonous reversed MFG travelling waves is discussed in section~\ref{subsec:MR_periodic}. We now show the following lemma:
\begin{lemma}\label{Le:MonotComp}Assume $\M$ is $L^1$ and that $(\lambda,c,\Th,\M,\cV)$ is a reversed MFG travelling wave. $\Th$ is linear in the support of $\M$, with $\Th'=\lambda L'(c)$.
Moreover, if $\Th$ is monotone increasing, $\M$ must be compactly supported.
\end{lemma}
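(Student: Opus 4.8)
The plan is to exploit the variational characterisation of the optimal control directly in similarity variables, namely that $\overline\alpha\equiv c$ is optimal in \eqref{Eq:DefVSV} for every $s_0\in\supp(\M)$, and to extract first-order (and, where needed, second-order) optimality information along the lines of the proof of Lemma~\ref{Le:NoDirac}. First I would fix an interior point $s_0$ of $\supp(\M)$ and compute the Gateaux derivative of $\alpha\mapsto\mathcal J(\alpha,s_0,\Th)$ at $\overline\alpha\equiv c$ in the directions $h_\pm\equiv\pm 1$, exactly as in Lemma~\ref{Le:NoDirac}. Because $\Th$ is now assumed to be in $C^1$ (no atoms, so by \eqref{Eq:MFG_s} $\Th''=-c\Th'-f(\Th)+\M\Th\in L^1_{\mathrm{loc}}$, hence $\Th\in C^1$), the one-sided derivatives $\Th'(s_0^\pm)$ coincide and equal $\Th'(s_0)$, and the two inequalities $\dot{\mathcal J}(\overline\alpha,s_0)[h_+]\leq 0$ and $\dot{\mathcal J}(\overline\alpha,s_0)[h_-]\leq 0$ become $\Th'(s_0)-\lambda L'(c)\leq 0$ and $-(\Th'(s_0)-\lambda L'(c))\leq 0$, forcing $\Th'(s_0)=\lambda L'(c)$ on the interior of $\supp(\M)$; by continuity of $\Th'$ this extends to all of $\supp(\M)$. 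Thus $\Th$ is affine with slope $\lambda L'(c)$ on each connected component of $\supp(\M)$, which is the first claim.

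For the second claim, suppose $\Th$ is monotone increasing and argue by contradiction that $\supp(\M)$ is unbounded, say unbounded above (the other case is symmetric). Since $\Th$ is monotone increasing, bounded (valued in $(0,1)$ with limits $0$ at $-\infty$ and $1$ at $+\infty$), and affine with slope $\lambda L'(c)$ on $\supp(\M)$, the slope must be nonnegative; but an affine function on an unbounded-above set with strictly positive slope is itself unbounded above, contradicting $\Th<1$, so we would need $\lambda L'(c)=0$, i.e. $L'(c)=0$. By \eqref{Hyp:L}, $0$ is the unique minimiser of the strictly convex $C^1$ function $L$, so $L'(c)=0$ forces $c=0$, contradicting $c>0$. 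Hence $\lambda L'(c)>0$ (strict convexity and $L'(0)=0$ give $L'(c)>0$ for $c>0$), and the affine-on-unbounded-set argument yields the contradiction directly: $\Th$ would exceed $1$. Therefore $\supp(\M)$ is bounded above; the same reasoning, using that $\Th>0$ and $\Th(-\infty)=0$, rules out unboundedness below (on a half-line $(-\infty,a)\cap\supp(\M)$ the affine function with positive slope tends to $-\infty$, contradicting $\Th>0$). Hence $\supp(\M)$ is compact.

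The main obstacle I anticipate is the regularity bookkeeping needed to justify differentiating $\mathcal J(\cdot,s_0,\Th)$ under the integral sign and to pin down exactly at which points the identity $\Th'=\lambda L'(c)$ holds: one must verify that for $s_0$ in the interior of $\supp(\M)$ the perturbed trajectory $s_\alpha$ stays, for small times and small perturbations, in a region where $\Th$ is $C^1$, so that the computation $\frac{d}{d\varepsilon}\big|_{0}\int_0^\infty e^{-\lambda t}\big(\Th(s_{\overline\alpha+\varepsilon h}(t))-L(\overline\alpha+\varepsilon h)\big)\,dt=\int_0^\infty e^{-\lambda t}\big(\Th'(s_0) t\,h - L'(c)h\big)\,dt$ is legitimate — here one uses $h\equiv\pm1$, $s_{\overline\alpha}(t)\equiv s_0$, and $\partial_\varepsilon s_{\overline\alpha+\varepsilon h}(t)\big|_0 = th$. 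The boundary points of $\supp(\M)$ are then handled by continuity rather than by a direct variational argument. Everything else — the sign of $L'(c)$, the affine-function-on-unbounded-set contradiction, the reduction to the two symmetric cases — is elementary given \eqref{Hyp:L} and the boundary conditions on $\Th$.
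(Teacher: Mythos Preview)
Your proof is correct, but it takes a genuinely different route from the paper's. The paper argues directly from the stationary system \eqref{Eq:MFG_s}: the condition $H'(\cV')=c$ on $\supp(\M)$, together with the strict convexity of $L$, gives $\cV'\equiv L'(c)$ there; plugging this constant into the HJB equation $\lambda\cV+c\cV'-H(\cV')=\Th$ makes $\Th$ affine on $\supp(\M)$ with slope $\Th'=\lambda\cV'=\lambda L'(c)$, and the compact-support claim then follows in one line from $\Th\to 1$ at $+\infty$. You instead bypass $\cV$ and the HJB equation entirely, returning to the primitive optimality of $\overline\alpha\equiv c$ in $\mathcal J(\cdot,s_0,\Th)$ and reading off the first-order condition on $\Th$ directly (exactly the computation of Lemma~\ref{Le:NoDirac}, now with $\Th\in C^1$ since $\M\in L^1$). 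The paper's approach is shorter because it exploits the system structure already set up; yours is more self-contained and makes explicit the positivity $L'(c)>0$ and the two-sided unboundedness argument, which the paper leaves implicit. One small simplification: since $\M\in L^1$ forces $\Th\in C^1$ everywhere, your Gateaux computation is valid at \emph{every} $s_0\in\supp(\M)$, not just interior points, so the ``extend by continuity'' step is unnecessary.
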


\begin{proof}[Proof of Lemma~\ref{Le:MonotComp}]
 Under assumption \eqref{Hyp:L}, the condition $H'(\cV')\equiv c$ in $\mathrm{supp}(\M)$ rewrites 
\[ \cV'\equiv L'(c)\text{ in }\mathrm{supp}(\M).\] Plugging this in the (HJB) equation in \eqref{Eq:MFG_s}, we deduce that $\Th$ is linear in $\mathrm{supp}(\M)$.  In particular, if $\Th$ is monotonic, since $\Th(s)\underset{s \to \infty}\to 1$, $\M$ is compactly supported.
\end{proof}
In the case of compactly supported $\M$, we will often assume (up to a translation) that $\min(\mathrm{supp}(\M))=0$ and, in many cases, we will even look for $\M$ with a connected support.

Finally, we provide an \emph{a priori} estimate on the maximal speed allowed.
\begin{lemma}
    \label{Rk:max_speed}
If $(\lambda,c,\Th,\M,\cV)$ is a reversed MFG travelling wave, then $c$ must satisfy
\[\forall s_0\in \mathrm{supp}(\M)\,,  L(c)\leq \Th(0).\]
    
    \end{lemma}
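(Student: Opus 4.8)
The plan is to turn the optimality of the constant control $\overline\alpha\equiv c$ built into Definition~\ref{De:ReversedMFGTW} into the desired inequality, by comparing its pay-off with that of a cheap competitor. First I would compute the pay-off of $\overline\alpha\equiv c$ itself. Fix $s_0\in\supp(\M)$ and work in the similarity-variable formulation \eqref{Eq:DefVSV} with $\theta:(t,x)\mapsto\Th(x-ct)$, whose value function is $\cV$. The feedback $\alpha\equiv c$ yields $s_\alpha'(t)=c-c=0$, so the trajectory stays frozen at $s_0$ and
\[
\mathcal{J}(\overline\alpha,s_0,\Th)=\int_0^\infty e^{-\lambda t}\bigl(\Th(s_0)-L(c)\bigr)\,dt=\frac{\Th(s_0)-L(c)}{\lambda}.
\]
Since by Definition~\ref{De:ReversedMFGTW} the control $\overline\alpha$ is optimal at every point of $\supp(\M)$, this equals $\cV(s_0)$.

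Next I would bound $\cV(s_0)$ from below by testing the admissible feedback $\alpha\equiv 0$: it drives $s_\alpha'(t)=-c$, hence $s_\alpha(t)=s_0-ct\to-\infty$ as $t\to\infty$. Using $L(0)=0$, $\Th>0$ on $\R$, $\Th(-\infty)=0$, and that $\Th$ is bounded (it is $[0,1]$-valued since $f$ is only defined on $[0,1]$, so the integral converges), one gets
\[
\cV(s_0)\geq\mathcal{J}(0,s_0,\Th)=\int_0^\infty e^{-\lambda t}\Th(s_0-ct)\,dt\geq 0.
\]
Chaining the two relations gives $\Th(s_0)-L(c)\geq 0$, i.e. $L(c)\leq\Th(s_0)$ for every $s_0\in\supp(\M)$ — a statement slightly stronger than the one claimed. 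To recover the stated form I would invoke the standing normalisation $\min\supp(\M)=0$ from the paragraph following Lemma~\ref{Le:MonotComp}: since $\supp(\M)$ is closed, $0\in\supp(\M)$, and evaluating the inequality at $s_0=0$ yields $L(c)\leq\Th(0)$. If one prefers the real-variable problem \eqref{Eq:DefV} directly referenced in Definition~\ref{De:ReversedMFGTW}, the computation is word-for-word the same: $\alpha\equiv c$ keeps $x_\alpha(t)=s_0+ct$ so $\theta(t,x_\alpha(t))\equiv\Th(s_0)$, while $\alpha\equiv 0$ keeps $x_\alpha\equiv s_0$ so $\theta(t,x_\alpha(t))=\Th(s_0-ct)\to 0$.

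I do not anticipate any genuine obstacle: the argument is a one-line comparison of controls, and it is exactly this kind of bound that caps the admissible speed $c$, since $L$ is superlinear and $\Th(0)\leq 1$. The only two points that warrant a word of care are the convergence of $\int_0^\infty e^{-\lambda t}\Th(s_0-ct)\,dt$ (immediate from $\lambda>0$ and the boundedness of $\Th$) and the admissibility in \eqref{Eq:DefVSV}/\eqref{Eq:DefV} of the constant feedbacks $\alpha\equiv c$ and $\alpha\equiv 0$ (clear, since constants belong to $L^\infty(\R)$).
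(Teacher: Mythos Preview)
Your proof is correct and essentially identical to the paper's: both compute $\mathcal{J}(\overline\alpha,s_0)=\lambda^{-1}(\Th(s_0)-L(c))$, compare with the competitor $\alpha\equiv 0$ using $L(0)=0$ and $\Th>0$, and conclude $L(c)\leq\Th(s_0)$ for every $s_0\in\supp(\M)$. You correctly spotted that the displayed inequality with $\Th(0)$ is really the $s_0$-dependent bound $L(c)\leq\Th(s_0)$ (the paper's own proof establishes exactly this), and your remark on the normalisation $0=\min\supp(\M)$ to recover the literal statement is the right reading.
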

    \begin{proof}[Proof of Lemma~\ref{Rk:max_speed}]
    Since $\overline \alpha=c$ is optimal for any $s_0\in \mathrm{supp}(\M)$ we have, 
    \[ \mathcal{J}(0,s_0)\leq
        \mathcal{J}(\overline \alpha,s_0)\text{ for any $s_0\in \mathrm{supp}(\M)$}.\] As $L(0)=0$ this yields
    \[ 0<\int_0^\infty e^{-\lambda t}\Th(s_0-ct)dt=
        \mathcal{J}(0,s_0)\leq \mathcal{J}(\overline \alpha,s_0)=\frac{\Th(s_0)-L(c)}\lambda\] whence the conclusion.
    \end{proof}   This leads to defining the maximal speed associated with a Lagrangian $L$. To define it, some background on the phase-portrait analysis of bistable equations is required. While we refer for details to section~\ref{sec:construct_Th} we indicate that, for any $c\geq 0$, there exists a solution $\Gamma_{0,c}$ of the differential equation $-\Gamma_{0,c}''-c\Gamma_{0,c}'=f(\Gamma_{0,c})$ in $\R$, with $\Gamma_{0,c}(-\infty)=\Gamma_{0,c}'(-\infty)=0$ with $\Gamma_{0,c}>0$ in $\R$ (in
    the dynamical system terminology, $\Gamma_{0,c}$ corresponds to the unstable manifold associated with the equilibrium $(U=0\,, U'=0)$). For any  $L$ satisfying \eqref{Hyp:L}, we define 
    \begin{equation}\label{Eq:Defcmax} c_{\mathrm{max}}(L):=\sup\{c>0\,, L(c)<\Vert\Gamma_{0,c}\Vert_{L^\infty(\R)}\}.\end{equation} 
    
\section{Main results}
\label{sec:MR}
We have divided this section in three main parts. The first one, section~\ref{subsec:MR_monotonous}, deals with the existence of monotonous reversed MFG travelling waves. The second one, section~\ref{subsec:coordination}, is focused on the tragedy of the commons: we show in certain cases that, when coordinating, each fisherman can actually obtain a higher fishing outcome, while ensuring that the fishes' population survives. The third one, section~\ref{Se:Extensions} contains a shorter discussion of periodic and non-monotonous travelling waves.

\paragraph{Standing assumptions}Throughout, $f$ is a bistable function (in the sense of Definition~\ref{DE:BNL}) that satisfies \eqref{Eq:invasive_BNL}. Furthermore, we require that, if $\eta \in (0;1)$ is the third root of $f$, we ask that 
\begin{equation}\label{Hyp:f2}f\text{ has a unique critical point in $[0;\eta]$}.\end{equation} 
Unless stated otherwise, $L$ satisfies \eqref{Hyp:L}.

\subsection{Existence of  monotonous reversed MFG travelling waves}
\label{subsec:MR_monotonous}
Here we insist on the monotonicity of $\Th$. We say that a reversed MFG travelling wave $(c,\lambda,\Th,\M,\cV)$ is monotonous when $\Th$ is monotone increasing.

\subsubsection{Existence of particular quadruplets $(\lambda,c,\Th,\M)$} We isolate the following construction result.

\begin{proposition}
    \label{Prop:exist_th_TW}
There exists a map $c_0:(0;\infty)\to \R_+^*$ such that:
    \begin{enumerate}[i)]
        \item
    for all $c\in(0,c_0(\lambda)]$, there exists a unique couple $(\Th,\M)$ such that
    \begin{itemize}
        \item
            $\M\in L^1(\R;[0,\infty))$ and $\supp(\M)=[0,s_1]$ for some $s_1>0$,
        \item
            $\Th$ is piecewise $\mathscr C^2$ and increasing from $0=\displaystyle{\lim_{s\to-\infty}}\Th(s)$  to $1=\displaystyle{\lim_{s\to\infty}}\Th(s)$,
        \item
            $\Th$ satisfies  $-c\Th'-\mu\Th''=f(\Th) - \M\Th$ on $\R$,
        \item $\Th'\equiv \lambda L'(c)$ on $\supp(\M)$,
          \end{itemize}
    \item
        for $c> c_0(\lambda)$, no such couple $(\Th,\M)$ exists,
    \item
        $c_0$
        is continuous
        and decreasing, with $\lim_{\lambda \to 0^+}c_0(\lambda)=\infty$, $\lim_{\lambda \to \infty}c_0(\lambda)=0$.
    \end{enumerate}
\end{proposition}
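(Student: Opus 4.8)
The plan is a shooting construction in the similarity variable $s$, based on the phase portrait of the reduced profile equation $-\Th''-c\Th'=f(\Th)$ (satisfied wherever $\M$ vanishes). Fix $\lambda>0$. First I would identify the structure of any admissible $\Th$. Since $\supp\M=[0,s_1]$, on $(-\infty,0]$ the profile $\Th$ solves the reduced equation with $\Th(-\infty)=0$; because $\M\in L^1(\R)$ forbids an atom of $\Th''$, $\Th$ is globally $\mathscr C^1$, so $\Th'(-\infty)=0$ and $\Th|_{(-\infty,0]}$ is a translate of the unstable--manifold trajectory $\Gamma_{0,c}$. On $[0,s_1]$ the condition $\Th'\equiv\lambda L'(c)$ (with $\lambda L'(c)>0$ by \eqref{Hyp:L}) makes $\Th$ affine with positive slope. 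On $[s_1,\infty)$, $\Th$ again solves the reduced equation, now with $\Th(\infty)=1$, so $\Th|_{[s_1,\infty)}$ is a translate of the stable--manifold trajectory $\Gamma_{1,c}$ of the saddle point $(1,0)$. The $\mathscr C^1$--matchings at $s=0$ and $s=s_1$ then determine everything: $\theta_0:=\Th(0)$ is a value of $\Gamma_{0,c}$ at which its derivative equals $\lambda L'(c)$, $\theta_1:=\Th(s_1)$ the analogous value along $\Gamma_{1,c}$, and $s_1=(\theta_1-\theta_0)/(\lambda L'(c))$.

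Next I would collect the phase--plane facts, via the energy $\frac{1}{2}(\Gamma')^2+F(\Gamma)$, $F'=f$, and hypothesis \eqref{Hyp:f2}: on its increasing arc $\Gamma_{0,c}$ rises from $0$ to a value $\geq\eta$, its derivative increasing from $0$ to a maximal value $p^*(c)>0$ --- attained at the unique point where $f(\Gamma)+c\Gamma'=0$, which lies strictly above the minimizer $\theta_m$ of $f$ on $[0,\eta]$ --- and then decreasing back to $0$; while along $\Gamma_{1,c}$ traced back from $(1,0)$ the derivative is positive and strictly decreasing in $\Gamma$ near $\Gamma=1$. Hence, for $c$ with $\lambda L'(c)<p^*(c)$ there are two candidates for $\theta_0$, and the admissible one is the larger, $\theta_0^+(c)$, on the decreasing--slope part of $\Gamma_{0,c}$: the equation forces $\M=(f(\Th)+c\lambda L'(c))/\Th$ on $[0,s_1]$ (and $\M\equiv0$ elsewhere), and with $\theta_0=\theta_0^+(c)$ one gets $f(\theta_0^+)+c\lambda L'(c)=-\Th''(0^-)>0$, whereas the smaller candidate would push the sign--changing dip of $f$ into the bridge and force $\M<0$. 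As $\theta_0^+(c)>\theta_m$, one then checks $f+c\lambda L'(c)>0$ along the whole bridge, so $\M>0$ on $(0,s_1)$, $\M$ is bounded hence $L^1$, $\supp\M=[0,s_1]$, and $\Th$ is increasing from $0$ to $1$. Uniqueness is automatic, since all three arcs and both cut points are forced (the competing choices are ruled out by $\M\geq0$ together with monotonicity of $\Th$).

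The construction thus works exactly when (a) $\lambda L'(c)<p^*(c)$, so that $\theta_0^+(c)$ exists, and (b) $\theta_0^+(c)<\theta_1(c)$, so that $s_1>0$. Using that $c\mapsto\lambda L'(c)$ is increasing from $0$, $c\mapsto p^*(c)$ decreasing (with $p^*(c)\to0$ as $c\to\infty$), $c\mapsto\theta_1(c)$ decreasing, and $\theta_0^+(c)$ increasing toward the slope--maximizing point as $\lambda L'(c)\uparrow p^*(c)$, the set of admissible $c$ is an interval $(0,c_0(\lambda)]$; I would expect a monotonicity lemma showing (b) is implied by (a) under the standing assumptions, so that $c_0(\lambda)$ is the unique root of $\lambda L'(c)=p^*(c)$ and is attained (there the numerator of $\M$ vanishes at $s=0$ only, so $\supp\M=[0,s_1]$ still). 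For $c>c_0(\lambda)$, $\Gamma_{0,c}$ carries no point of slope $\lambda L'(c)$, so no admissible couple exists --- the non-existence statement.

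Finally, the properties of $c_0$. Continuity follows from the joint continuity of $(\lambda,c)\mapsto(p^*(c),\theta_0^+(c),\theta_1(c))$ (continuous dependence of $\Gamma_{0,c},\Gamma_{1,c}$ on $c$, and of $L'$ on $c$) together with the strict monotonicities above, which force the equation $\lambda L'(c)=p^*(c)$ to have a single continuously moving root. Monotonicity: a larger $\lambda$ raises $\lambda L'(c)$, hence moves the crossing with $p^*(c)$ to smaller $c$. The limits: as $\lambda\to0^+$, for each fixed $c$ the left side $\lambda L'(c)\to0$ while $p^*(c)$ stays positive, so the root must escape to $+\infty$; as $\lambda\to\infty$, $\lambda L'(c)>p^*(c)$ for every fixed $c>0$, so the root tends to $0$. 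I expect this last step to be the main obstacle: proving the admissible set is exactly such an interval, and establishing the monotonicity and (above all) the limits of $c_0$, which hinge on sharp control of the dependence of the phase--plane arcs $\Gamma_{0,c},\Gamma_{1,c}$ --- in particular of $p^*(c)$ and of the overshoot $\|\Gamma_{0,c}\|_{L^\infty(\R)}$ --- on $c$, including the regimes $c\to0^+$ and $c\to\infty$, together with the behaviour of $L'$ near $0$ and at $\infty$.
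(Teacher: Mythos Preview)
Your proposal is correct and follows essentially the same route as the paper: a three-arc construction (unstable manifold $\Gamma_{0,c}$ of $(0,0)$, affine bridge with slope $\lambda L'(c)$, stable manifold $\Gamma_{1,c}$ of $(1,0)$), with $\M=(f(\Th)+c\lambda L'(c))/\Th$ on the bridge, and $c_0(\lambda)$ defined as the unique root of $\lambda L'(c)=p^*(c)$ where $p^*(c)=\max_\R\Gamma_{0,c}'$; the paper's Lemma~\ref{Le:Galere1} supplies exactly the monotonicity and limits of $p^*(c)$ you anticipate needing.

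One point worth noting: the ordering condition you call (b), namely $\theta_0^+(c)<\theta_1(c)$, is handled in the paper not by a separate monotonicity lemma but by the energy separation proved in Lemma~\ref{Le:Stab1}: the unstable manifold $S_0$ lies inside the invariant region $X=\{E\leq0\}$ while the stable manifold $S_1$ lies in $X^c$, so at any common slope level the two curves cannot cross, which forces $\theta_0^+<\theta_1$. This is the ingredient you were expecting to need.
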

We refer to Section~\ref{sec:construct_Th} for the proof of this proposition. The map $c_0$ is characterised in Eq. \eqref{Eq:c0} below.

The upcoming theorems all rely on the fine study of the particular triplets $(c,\Th,\M)$ given by Proposition~\ref{Prop:exist_th_TW}, and boil down to proving that $\overline \alpha=c$ is optimal in \eqref{Eq:DefVSV} up to certain conditions on $(c,\lambda)$. 

We represent, in Fig.~\ref{fig:sol_monotonous}, the way this triplet can be constructed. Fixing $c>0$, we represent on the right hand side the phase portrait of the ODE $-\Th''-c\Th'=f(\Th)$. The idea, in parts similar to \cite{Bressan}, is to first follow the green curve which corresponds to the unstable manifold of the equilibrium $(\Th=0,\Th'=0)$, and to start acting at a definite moment to reach the stable manifold associated with the equilibrium $(\Th=1,\Th'=0)$. We refer to section~\ref{sec:construct_Th} for details on this phase portrait. 
\begin{figure}[h]
\begin{center}
\includegraphics[scale=0.12]{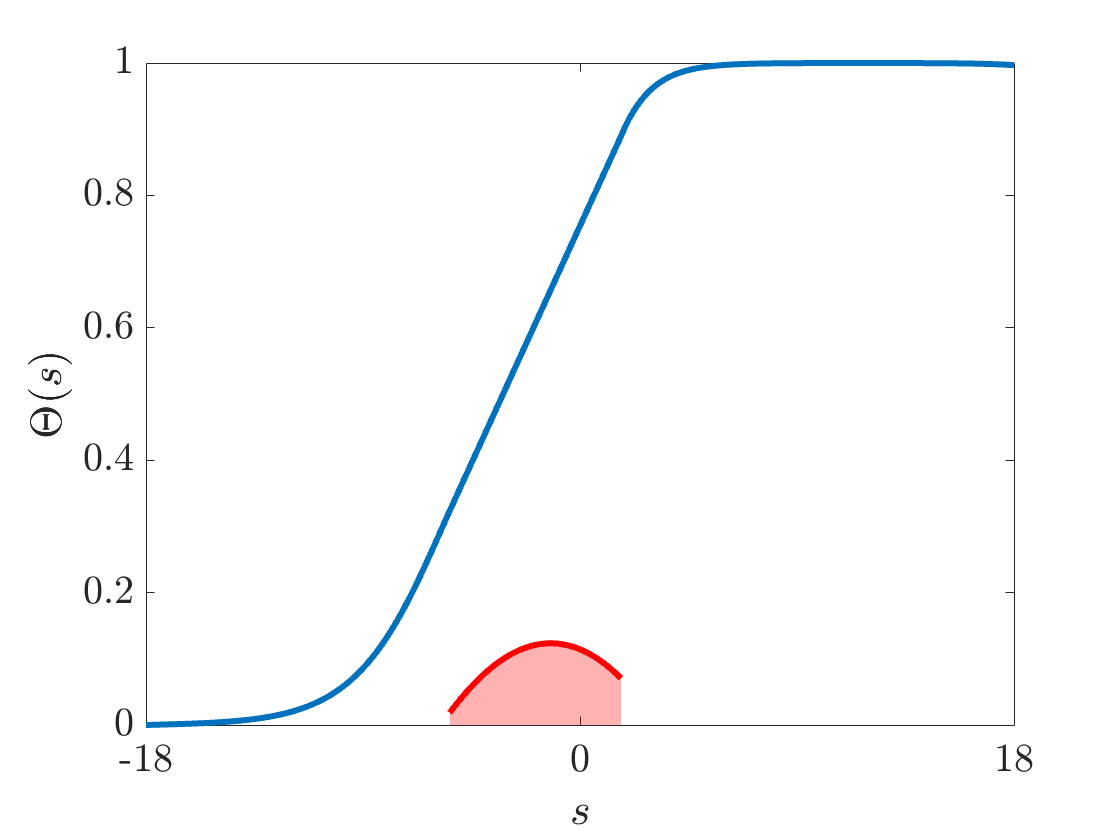}
\includegraphics[scale=0.12]{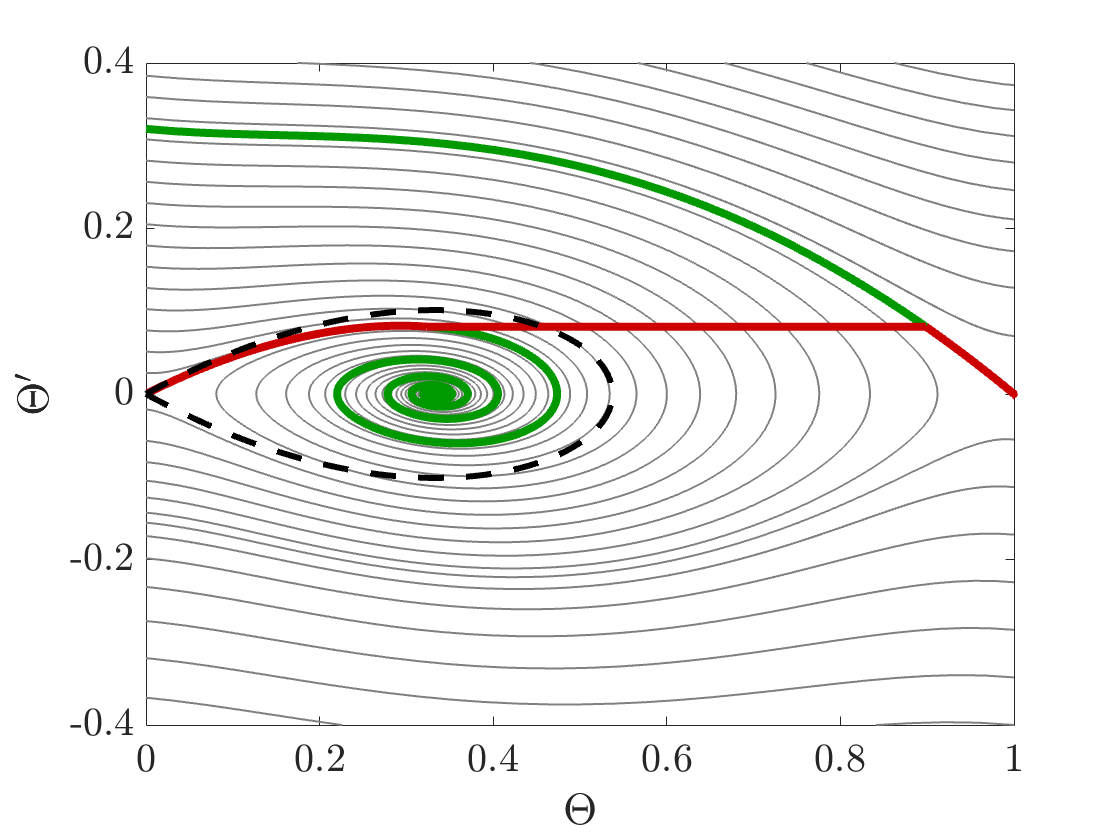}
\includegraphics[scale=0.12]{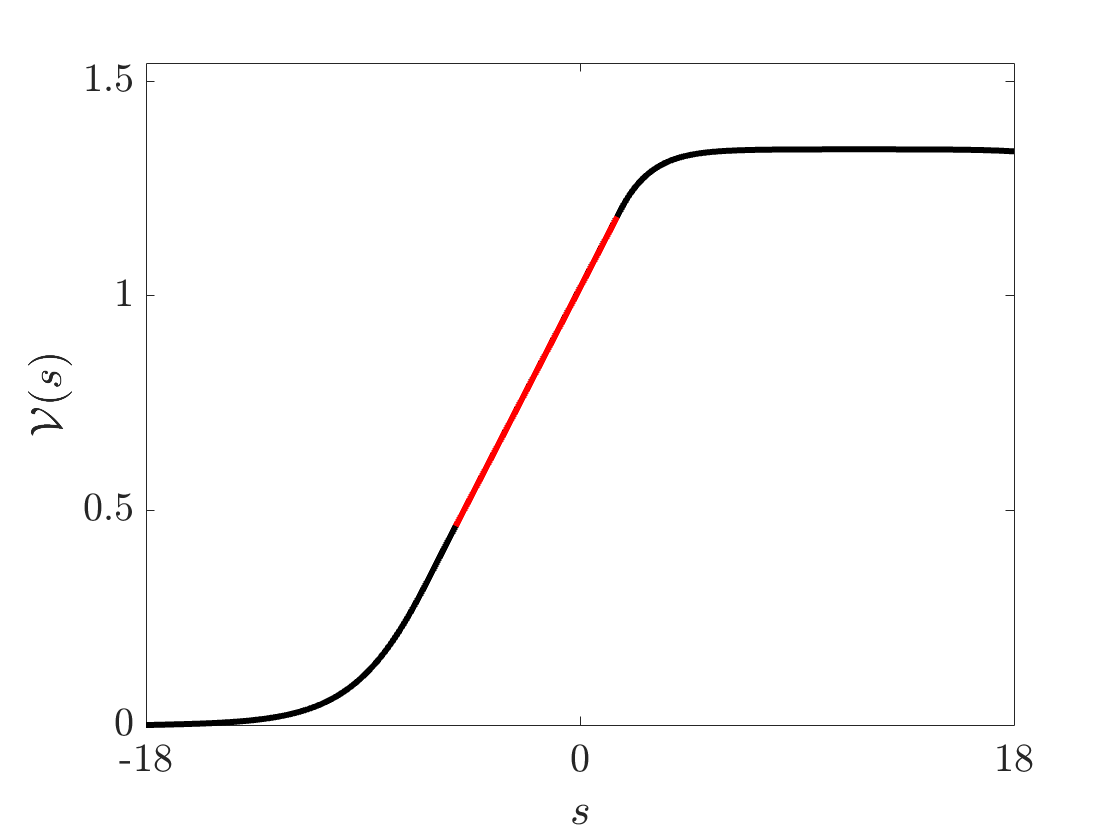}\end{center}
\caption{
    \label{fig:sol_monotonous}
   Reversed travelling wave for $0<c<2\sqrt{f'(\theta)}$. The travelling wave profile $\Th$ in blue and in the control $\M$ that generates it in red (left); Construction of this reversed travelling wave in the phase-plane; the stable manifold of $(1,0)$ and the unstable manifold of $(0,0)$ are in green. The  dashed black line represents the invariant. The reversed travelling wave trajectory is in red (center); the value function $\lambda=0.79$ the red segment where the support of $\mathcal{M}$ is located (right).
}
\end{figure}

\subsubsection{A first existence result for reversed MFG travelling waves}
Our first theorem is a general existence result:
\begin{theorem}\label{Th:Main}
For any $\lambda>0$,
    there exists
    $\e_0(\lambda)\in(0,c_0(\lambda))$
    such that,
    for  any $c\in(0,\e_0(\lambda))$
    there exists a unique
    monotonous
    reversed MFG travelling wave with velocity $c$.\end{theorem}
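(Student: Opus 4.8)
\textbf{Proof plan for Theorem~\ref{Th:Main}.}

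The plan is to take the quadruplet $(\lambda,c,\Th,\M)$ supplied by Proposition~\ref{Prop:exist_th_TW} for $c\in(0,c_0(\lambda)]$, and to show that for $c$ small enough the constant control $\overline\alpha\equiv c$ is genuinely optimal in the control problem \eqref{Eq:DefVSV} with payoff driven by this very $\Th$. By construction, $\Th'\equiv\lambda L'(c)$ on $\supp(\M)=[0,s_1]$, so the candidate value function $\cV$ obtained by integrating the (HJB) equation in \eqref{Eq:MFG_s} together with the constraint $H'(\cV')\equiv c$ on $[0,s_1]$ automatically satisfies the stationary system; what is missing is the verification that $\cV$ is the \emph{true} value function, i.e.\ that no other trajectory starting from a point of $[0,s_1]$ does better. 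First I would write down $\cV$ explicitly: on $[0,s_1]$ it is a known affine-plus-constant expression forced by $\cV'=L'(c)$; outside $[0,s_1]$ one must propagate it. The key will be to reformulate optimality as a verification theorem: it suffices to exhibit a (Lipschitz, or piecewise $\mathscr C^1$) function $W$ solving $\lambda W + cW' - H(W') \ge \Th$ everywhere with equality along the flow $s_\alpha' = c - c = 0$... more precisely, equality on $[0,s_1]$ and the correct decay $W(+\infty)=0$, $W$ bounded, so that $W$ is a viscosity supersolution that is attained by $\overline\alpha$.

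The heart of the argument, and the main obstacle, is the following: for a trajectory leaving $s_0\in[0,s_1]$ with a non-constant control, one must show the running gain $\int_0^\infty e^{-\lambda t}(\Th(s_\alpha(t))-L(\alpha(t)))\,dt$ cannot exceed $\cV(s_0)=(\Th(s_0)-L(c))/\lambda$. The natural route is a first-order (and, where needed, convexity) comparison: since $L$ is strictly convex with minimum $0$ at $0$, deviating the control costs $L(\alpha)-L(c)\ge L'(c)(\alpha-c)$, while the positional gain changes by an amount controlled by $\|\Th'\|_\infty$ times the displacement. Because $\Th'=\lambda L'(c)$ exactly on $[0,s_1]$ and $\Th'$ is small elsewhere when $c$ is small (here one uses the phase-portrait bounds from Section~\ref{sec:construct_Th}: $\|\Th'\|_{L^\infty}$ and in particular $\sup\Th'$ to the left of $\supp\M$, and the exponential decay of $1-\Th$ and of $\Th'$ to the right, all degenerate as $c\to0$ since $\lambda L'(c)\to0$), the marginal positional gain from moving is dominated by the marginal control cost $\lambda L'(c)$ after discounting — this is precisely the computation already performed in the proof of Lemma~\ref{Le:NoDirac}, now needed as a genuine inequality along all of $[0,s_1]$ rather than just at a jump point. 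Quantitatively, I expect to need $\e_0(\lambda)$ chosen so that $\sup_{s\le s_1}\Th'(s)\le \lambda L'(c)$ and the analogous control on the right branch hold, which is possible because the left unstable-manifold branch has $\Th'$ vanishing at $-\infty$ and bounded by a quantity $\to 0$ with $c$, while on $[0,s_1]$ equality holds by design; then a direct estimate shows $\overline\alpha\equiv c$ beats every competitor.

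Concretely I would proceed in steps: (1) fix $\lambda>0$ and, for $c\in(0,c_0(\lambda)]$, invoke Proposition~\ref{Prop:exist_th_TW} to get $(\Th,\M)$ with $\supp\M=[0,s_1(c)]$; (2) define $\cV$ by solving the first-order ODE $\lambda\cV + c\cV' - H(\cV') = \Th$ on $\R$ with $\cV'=L'(c)$ on $[0,s_1]$ and the bounded/vanishing-at-$+\infty$ selection, checking it is well-defined, Lipschitz and bounded using $0\le\Th\le1$; (3) establish a verification lemma: if $\cV$ is a bounded viscosity supersolution of the HJB equation and the feedback $\overline\alpha\equiv c$ realizes equality along its own flow from every $s_0\in[0,s_1]$, then $\overline\alpha$ is optimal there, hence $(\lambda,c,\Th,\M,\cV)$ is a reversed MFG travelling wave; (4) prove the supersolution/optimality inequality using strict convexity of $L$, the identity $\Th'=\lambda L'(c)$ on $\supp\M$, and the smallness estimates on $\Th'$ off $\supp\M$ from Section~\ref{sec:construct_Th}, which forces the existence of $\e_0(\lambda)\in(0,c_0(\lambda))$ such that the inequality is valid for all $c<\e_0(\lambda)$; (5) uniqueness: any monotonous reversed MFG travelling wave with velocity $c$ must, by Lemma~\ref{Le:MonotComp}, have $\Th'\equiv\lambda L'(c)$ on a compact $\supp\M$ and solve the same ODE system, so it coincides with the one from Proposition~\ref{Prop:exist_th_TW} by the uniqueness already asserted there, and $\cV$ is then determined. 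The delicate point throughout is step (4): converting the pointwise heuristic of Lemma~\ref{Le:NoDirac} into a global optimality statement, which is exactly where the threshold $\e_0(\lambda)$ is born and where I expect the real work to lie.
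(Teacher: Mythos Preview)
Your step~(4) contains a genuine gap. You claim that $\sup_{s\le s_1}\Th'(s)\le\lambda L'(c)$ can be enforced for small $c$, and more generally that ``$\|\Th'\|_{L^\infty}$ \ldots\ degenerate[s] as $c\to0$ since $\lambda L'(c)\to0$''. This is false. By the construction in Proposition~\ref{Prop:exist_th_TW}, on $(-\infty,0]$ the profile $\Th$ coincides with $\Gamma_{0,c}$, and $s=0$ is chosen as the first point \emph{after} the maximiser $s^*$ of $\Gamma_{0,c}'$ at which $\Gamma_{0,c}'=\lambda L'(c)$. Hence $\sup_{s\le 0}\Th'(s)=\Gamma_{0,\max}'(c)$, and by Lemma~\ref{Le:Galere1} this quantity is \emph{decreasing} in $c$, so it stays bounded away from zero as $c\to0$. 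In particular $\Gamma_{0,\max}'(c)>\lambda L'(c)$ strictly for every $c<c_0(\lambda)$, by the very definition \eqref{Eq:c0} of $c_0$. Equivalently, if you take the natural candidate $W=(\Tht-L(c))/\lambda$ with $\Tht$ the affine extension of $\Th|_{[0,s_1]}$, the supersolution inequality $\lambda W+cW'-H(W')\ge\Th$ reduces to $\Tht\ge\Th$, and this fails on the nonempty interval $(-\infty,s_-(\lambda,c))$ where the unstable-manifold branch lies strictly above its tangent line. Shrinking $c$ does not remove this bad region; it only pushes $s_-(\lambda,c)$ towards $-\infty$.

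The paper exploits precisely this last observation rather than a global supersolution argument. It compares directly with the linear auxiliary problem through an identity (Lemma~\ref{Lem:compare_Tht}) that tracks the discounted integral of $\Tht-\Th$ along any competing trajectory. Trajectories that never reach $s_-(\lambda,c)$ are handled immediately since $\Tht\ge\Th$ there. For trajectories that do, the key is that admissible controls can be restricted to $0\le\alpha<c$ (monotonicity of $\Th$ and $L(0)=0$), so crossing the half-interval $[s_-(\lambda,c)/2,0]$ takes time at least $|s_-(\lambda,c)|/(2c)\to\infty$; on that stretch $\Tht-\Th$ is uniformly positive and of order $\Th(0)$ as $c\to0$, producing a surplus that, combined with the exponential discount of the remainder, dominates whatever is lost beyond $s_-(\lambda,c)$. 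The smallness of $c$ therefore enters through the divergence $s_-(\lambda,c)\to-\infty$ in \eqref{Eq:S-}, not through any smallness of $\|\Th'\|_\infty$ --- this is the mechanism your plan is missing.
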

This theorem is proved in section~\ref{subsec:proof_Th_Main}. While a fairly general result, its main drawbacks are that, first, the map $\e_0$ is not easily computed and, second, that this statement provides a sufficient condition only. We refer to Fig.~\ref{fig:sol_monotonous}
for a graphical illustration of the profiles $(\Th,\M)$.

\subsubsection{Finer tuning of the speed of the travelling wave}
The next two theorems address the question of the speed $c$. From Theorem~\ref{Th:Main} and Lemma~\ref{Rk:max_speed} we know that $(\lambda,c)$ must satisfy $c<\min (c_{\max},c_0(\lambda))$.

In the following, we present
two other existence results,
proving that these necessary 
conditions are sharp in some
regimes.
\begin{theorem}
    \label{Th:small_lambda}
    For any $c\in(0,c_{\max})$,
    there exists $\lambda$ small enough
    such that there exists
    a monotonous
    reversed MFG travelling wave
    (in the sense of Definition~\ref{De:ReversedMFGTW})
    with velocity~$c$ and discount factor $\lambda$.
\end{theorem}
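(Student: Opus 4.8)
\textbf{Plan of proof for Theorem~\ref{Th:small_lambda}.}

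The plan is to start from the family of quadruplets $(\lambda,c,\Th,\M)$ produced by Proposition~\ref{Prop:exist_th_TW}, which already encodes the bistable profile equation and the linearity condition $\Th'\equiv\lambda L'(c)$ on $\supp(\M)$, and to show that, for $\lambda$ small (depending on the fixed target speed $c<c_{\max}$), the constant control $\overline\alpha\equiv c$ is genuinely optimal in the control problem \eqref{Eq:DefVSV} for every starting point $s_0\in\supp(\M)$. The key point is that Proposition~\ref{Prop:exist_th_TW} requires $c\le c_0(\lambda)$, and since $c_0(\lambda)\to\infty$ as $\lambda\to 0^+$, for any fixed $c<c_{\max}$ we can pick $\lambda$ small enough that both $c\le c_0(\lambda)$ holds (so the profile exists) and the optimality condition can be verified. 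First I would fix $c\in(0,c_{\max})$ and record that, by definition of $c_{\max}$ in \eqref{Eq:Defcmax}, we have $L(c)<\|\Gamma_{0,c}\|_{L^\infty(\R)}$; this strict inequality is the quantitative room we will exploit.

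The core of the argument is a perturbative/asymptotic analysis as $\lambda\to 0^+$. In the similarity variables, the representative fisherman at $s_0$ with control $\overline\alpha\equiv c$ stays put (since $s_\alpha'=\alpha-c=0$), so $\mathcal J(\overline\alpha,s_0)=\Th(s_0)/\lambda - L(c)/\lambda$. I would compare this with $\mathcal J(\alpha,s_0)$ for an arbitrary feedback control and show it cannot do better. The natural route: as $\lambda\to 0^+$, the profile $\Th$ constructed in Proposition~\ref{Prop:exist_th_TW} on $\supp(\M)=[0,s_1]$ is linear with slope $\lambda L'(c)\to 0$, so $\Th$ becomes essentially constant on its support, with value $\Th(0)$ converging (I expect) to $\|\Gamma_{0,c}\|_{L^\infty(\R)}$ — this is exactly the profile obtained by following the unstable manifold $\Gamma_{0,c}$ up to its maximum and then gluing. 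Thus for small $\lambda$, $\Th\big|_{\supp(\M)}\approx \|\Gamma_{0,c}\|_{L^\infty}>L(c)$. The value $\mathcal J(\overline\alpha,s_0)\approx(\|\Gamma_{0,c}\|_{L^\infty}-L(c))/\lambda$ is large and positive. For any alternative control, the gain term $\int_0^\infty e^{-\lambda t}\Th(s_\alpha(t))\,dt$ is bounded above by $\|\Th\|_{L^\infty(\R)}/\lambda = 1/\lambda$, but to reach the region where $\Th$ is close to $1$ the fisherman must travel a definite distance at a definite cost, and the discounting $e^{-\lambda t}$ with $\lambda$ small means the cost of moving is paid "at full price" relative to the slowly-decaying reward — one shows the net effect is a loss of order $O(1)$ relative to $1/\lambda$ is not enough, so one needs the sharper comparison directly against $\Th(s_0)$. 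I would therefore argue more carefully: write $\mathcal J(\alpha,s_0)-\mathcal J(\overline\alpha,s_0)$ and, using $\Th'$ small on $\supp(\M)$ and the precise behaviour of $\Th$ outside (governed by the stable/unstable manifolds of the bistable ODE, uniformly in $\lambda$ since the $\lambda$-dependence enters only through the small gluing slope), bound the perturbed trajectory's reward minus the cost, exploiting the strict inequality $L(c)<\|\Gamma_{0,c}\|_{L^\infty}$ as the buffer that kills all lower-order terms for $\lambda$ small.

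A cleaner alternative, which I would pursue in parallel, is to use the HJB characterization: verify that the candidate $\cV$ obtained from the third equation of \eqref{Eq:MFG_s}, namely $\cV'\equiv L'(c)$ on $\supp(\M)$ and $\cV$ solving $\lambda\cV+c\cV'-H(\cV')=\Th$ elsewhere, is a bona fide (viscosity) solution of the HJB equation on all of $\R$ with the right behaviour at $\pm\infty$, and that $H'(\cV')=c$ precisely where $\M$ is supported while $H'(\cV')\ne c$ (the fisherman wants to move) outside — this is automatic off the support by construction, and the delicate inequality to check is that on $\supp(\M)$ no deviation improves the cost, i.e. that $p\mapsto \lambda\cV + cp - H(p)$ being solved by $p=L'(c)$ is consistent with optimality, which again reduces to $L(c)\le\Th(0)$ with the strict version giving stability. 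The main obstacle I anticipate is precisely this verification that $\overline\alpha\equiv c$ beats all competitors uniformly over $\supp(\M)$: one must control the trajectories that leave the support of $\M$ (entering regions where $\Th$ is larger, up to $1$) and show the travel cost always outweighs the reward gain, and to make this quantitative one genuinely needs the $\lambda\to 0$ asymptotics of the full profile $(\Th,\M)$ from Section~\ref{sec:construct_Th}, in particular that $\Th$ on the support stays bounded below by something strictly exceeding $L(c)$ and that the "descent" of $\Th$ toward $0$ on the left and "ascent" toward $1$ on the right happen on $O(1)$ length scales independent of $\lambda$. Once these uniform profile estimates are in hand, the optimality of $\overline\alpha\equiv c$ — hence the fact that $(\lambda,c,\Th,\M,\cV)$ is a reversed MFG travelling wave in the sense of Definition~\ref{De:ReversedMFGTW} — follows by a convexity/direct comparison argument as in the proofs of Lemmas~\ref{Le:NoDirac} and~\ref{Rk:max_speed}, and monotonicity of $\Th$ is inherited from the construction in Proposition~\ref{Prop:exist_th_TW}.
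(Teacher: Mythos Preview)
Your overall plan is correct --- start from Proposition~\ref{Prop:exist_th_TW}, use $c_0(\lambda)\to\infty$ so the profile exists, and verify that $\overline\alpha\equiv c$ is optimal in \eqref{Eq:DefVSV} for $\lambda$ small by exploiting the strict gap $L(c)<\|\Gamma_{0,c}\|_{L^\infty}$ --- and you correctly note that $\Th(0)\to\|\Gamma_{0,c}\|_{L^\infty}$. The gap is that you have the hard direction backwards. You focus on competitors moving \emph{right}, toward $\Th\approx 1$, and worry about balancing their travel cost against the reward gain. But that case turns out to be free: the paper introduces the linear extension $\Tht(s)=\Th(0)+\lambda L'(c)s$, observes that $\Tht\ge\Th$ on $[s_-(\lambda,c),\infty)$ for some $s_-(\lambda,c)<0$, and since $\overline\alpha\equiv c$ is trivially optimal for the \emph{linear} reward $\Tht$ (convexity of $L$; this is Lemma~\ref{Le:OptimalStratTilde}), any trajectory that stays in $\{s\ge s_-\}$ --- in particular any right-moving one --- is automatically dominated by $\overline\alpha$. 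No cost-versus-reward estimate is needed there.

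The genuine obstacle is trajectories drifting \emph{left} past $s_-(\lambda,c)$: they pay little Lagrangian cost (down to $L(0)=0$) while $\Th$ is still large, before it eventually decays to~$0$. On $(-\infty,s_-)$ one has $\Th>\Tht$, so the linear comparison gives nothing directly. The paper handles this via the comparison inequality of Lemma~\ref{Lem:compare_Tht} applied at an intermediate time $T_r(\alpha)$ (when the trajectory reaches $r\,s_-(\lambda,c)$ for a fixed $r\in(0,1)$), combined with the asymptotic $-\lambda s_-(\lambda,c)\to\|\Gamma_{0,c}\|_{L^\infty}/L'(c)$ as $\lambda\to 0$. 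This yields on one side $\lambda\cVt(r s_-)\ge (1-3r)\|\Gamma_{0,c}\|_{L^\infty}-L(c)>0$ (for $r$ chosen small using exactly your buffer $L(c)<\|\Gamma_{0,c}\|_{L^\infty}$) and on the other $\lambda\mathcal J_{T_r}(s_\alpha(T_r),\alpha)\le\Th(s_-)+\Th(r s_-)\to 0$, closing the argument uniformly in~$\alpha$. Your ``$O(1)$ descent on the left'' remark points toward this, but without the $\Tht$-comparison machinery and the correct identification of the left-moving, low-cost competitors as the real issue, the proof does not close.
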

For the proof of this theorem, we refer to section~\ref{subsec:proof_small_lambda}. Theorem~\ref{Th:small_lambda} shows that the necessary condition $c<c_{\max}$ is almost sharp in the asymptotic regime $\lambda \to 0$. Observe that while it may at first sight seem like an innocuous corollary of Theorem~\ref{Th:Main}, the main difficulty is that Theorem~\ref{Th:Main} does not provide an asymptotic behaviour of $\e_0(\lambda)$ as $\lambda\to 0$.

The following theorem
focuses on the case
of strongly convex
Lagrangians.
In this case, and for big enough discount factors $\lambda$, we prove that the necessary condition $c\leq c_0(\lambda)$ is sharp. In other words, this is a regime where a reversed travelling wave is actually a reversed MFG travelling wave.
\begin{theorem}\label{Th:Quadratique}
    
    Assume that $L$ is $\mathscr C^2$
    and $L''\geq \underline D$
    for some positive constant $\underline D$.
    There exists $\lambda_0(\underline D)>0$
    such that, for any $(c,\lambda)$ satisfying
    \begin{equation*}
        \lambda\geq\lambda_0(\underline D)
        \qquad
        \text{ and }
        \qquad
        0<c\leq c_0(\lambda),
    \end{equation*}
    there exists a unique monotonous 
    reversed MFG travelling wave
    with velocity $c$ and discount factor~$\lambda$.
\end{theorem}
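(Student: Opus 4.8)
}
Fix $\lambda$ and $0<c\le c_0(\lambda)$ and let $(\Th,\M)$ be the couple produced by Proposition~\ref{Prop:exist_th_TW}, so that $\supp(\M)=[0,s_1]$, $\Th'\equiv\lambda L'(c)$ on $[0,s_1]$, and the second line of \eqref{Eq:MFG_s} holds; note also $\Th\in\mathscr C^1(\R)$ since $\M\in L^1$ carries no atom by Lemma~\ref{Le:NoDirac}. The plan is to prove that, when $\lambda$ is large (in a way depending only on $\underline D$ and on $f$), the constant control $\overline\alpha\equiv c$ is \emph{optimal}, and uniquely so, in the similarity-variable problem \eqref{Eq:DefVSV} for every $s_0\in\supp(\M)$; taking $\cV$ to be the resulting value function then produces, by the discussion following Definition~\ref{De:ReversedMFGTW}, a monotonous reversed MFG travelling wave of velocity $c$ and discount $\lambda$. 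The first-order optimality condition at $\overline\alpha$ is precisely the identity $\Th'\equiv\lambda L'(c)$ on $\supp(\M)$ (this is the Gateaux-derivative computation of the proof of Lemma~\ref{Le:NoDirac}), so what remains is a \emph{quantitative second-order argument}, and it is here that the strong convexity of $L$ and the size of $\lambda$ come in.

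The argument rests on three elementary estimates. First, a \emph{semiconcavity} bound on $\Th$: since $\Th$ is increasing, $\Th'\ge0$, hence off $\supp(\M)$ one has $\Th''=-c\Th'-f(\Th)\le-f(\Th)\le K_f:=\max_{[0,1]}(-f)$, while $\Th''=0$ on $\supp(\M)$; therefore $\Th''\le K_f$ a.e., so $a\mapsto\Th(s_0+a)-\tfrac{K_f}{2}a^2$ is concave and lies below its tangent at $0$, i.e. $\Th(s_0+a)\le\Th(s_0)+\Th'(s_0)a+\tfrac{K_f}{2}a^2$ for all $a\in\R$ (and $K_f\in(0,\infty)$ because $f(0)=0$, $f'(0)<0$). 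Second, a \emph{strong convexity} bound on $L$: from $L''\ge\underline D$ with $L(0)=L'(0)=0$ one gets $L(c+a)-L(c)\ge L'(c)a+\tfrac{\underline D}{2}a^2$ and $L(a)\ge\tfrac{\underline D}{2}a^2$ for all $a$. Third, a \emph{weighted Hardy inequality}: for $\sigma$ locally absolutely continuous with $\sigma(0)=0$ and finite weighted energy, integrating $\int_0^\infty e^{-\lambda t}\sigma^2\,dt=\tfrac{2}{\lambda}\int_0^\infty e^{-\lambda t}\sigma\sigma'\,dt$ by parts and using Cauchy--Schwarz gives $\int_0^\infty e^{-\lambda t}\sigma^2\,dt\le\tfrac{4}{\lambda^2}\int_0^\infty e^{-\lambda t}(\sigma')^2\,dt$, with strict inequality unless $\sigma\equiv0$, the boundary terms vanishing because a finite weighted energy forces $e^{-\lambda t}\sigma(t)^2\to0$.

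These combine as follows. Fix $s_0\in\supp(\M)$ (so $\Th'(s_0)=\lambda L'(c)$) and an admissible control $\alpha$ of finite cost (infinite cost gives payoff $-\infty$ and is irrelevant); by the quadratic lower bound $L\ge\tfrac{\underline D}{2}(\cdot)^2$ the trajectory $\sigma:=s_\alpha-s_0$ satisfies $\sigma(0)=0$, $\sigma'=\alpha-c$, and both $\int e^{-\lambda t}(\sigma')^2$ and $\int e^{-\lambda t}\sigma^2$ are finite. Since $\mathcal J(\overline\alpha,s_0)=\lambda^{-1}(\Th(s_0)-L(c))$, the semiconcavity and strong-convexity bounds give
\[
\mathcal J(\alpha,s_0)-\mathcal J(\overline\alpha,s_0)
\ \le\
\int_0^\infty e^{-\lambda t}\big(\lambda L'(c)\,\sigma-L'(c)\,\sigma'\big)\,dt
+\int_0^\infty e^{-\lambda t}\Big(\tfrac{K_f}{2}\sigma^2-\tfrac{\underline D}{2}(\sigma')^2\Big)\,dt .
\]
Integration by parts yields $\int_0^\infty e^{-\lambda t}\sigma'\,dt=\lambda\int_0^\infty e^{-\lambda t}\sigma\,dt$, so the first integral vanishes — this is the cancellation of the first-order terms — and the Hardy inequality bounds the second by $\tfrac12\big(\tfrac{4K_f}{\lambda^2}-\underline D\big)\int_0^\infty e^{-\lambda t}(\sigma')^2\,dt$, which is $\le0$ as soon as $\lambda\ge\lambda_0(\underline D):=2\sqrt{K_f/\underline D}$, and $<0$ when $\sigma\not\equiv0$ (strictly, by the equality case in the Hardy inequality when $\lambda=\lambda_0$, by the sign of the prefactor when $\lambda>\lambda_0$). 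Hence $\overline\alpha\equiv c$ is the unique optimal control from every $s_0\in\supp(\M)$, which gives existence. For uniqueness, Lemma~\ref{Le:MonotComp} shows that the pair $(\Th,\M)$ of any monotonous reversed MFG travelling wave of velocity $c$ has the structure of Proposition~\ref{Prop:exist_th_TW} and hence is the unique such pair, after which $\cV$ is forced to be the value function of \eqref{Eq:DefVSV}.

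I expect the main obstacle to be the optimality verification, and within it the integrability and decay bookkeeping that legitimises the two integrations by parts and the Hardy inequality for \emph{arbitrary} finite-cost controls (not merely bounded ones) — this is exactly what the estimate $L(a)\ge\tfrac{\underline D}{2}a^2$ is for. The remaining pieces (semiconcavity of $\Th$, quadratic control of $L$, the weighted Hardy inequality, and the vanishing of the linear terms) are routine; the sole role of the hypothesis $\lambda\ge\lambda_0(\underline D)$ is to shrink the Hardy constant $4/\lambda^2$ below the convexity ratio $\underline D/K_f$, so that the $\Th$-semiconcavity term is absorbed by the strict convexity penalty of $L$.
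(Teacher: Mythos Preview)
Your proof is correct and follows essentially the same approach as the paper: verify that $\overline\alpha\equiv c$ is a critical point, then use the weighted Hardy inequality $\int_0^\infty e^{-\lambda t}\sigma^2\le\tfrac{4}{\lambda^2}\int_0^\infty e^{-\lambda t}(\sigma')^2$ to absorb the second-order term coming from $\Th$ into the strong convexity of $L$, yielding the threshold $\lambda_0=2\sqrt{K_f/\underline D}$. The paper packages this as a global concavity argument (bounding the second Gateaux derivative of $J$ at every $\alpha$ via a two-sided bound $\|\Th''\|_\infty\le M_k$), whereas you bound the difference $\mathcal J(\alpha,s_0)-\mathcal J(\overline\alpha,s_0)$ directly via the one-sided semiconcavity $\Th''\le K_f=\max_{[0,1]}(-f)$; your one-sided constant is more explicit and manifestly uniform in $c$, but otherwise the arguments coincide.
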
The proof is given in section~\ref{sec:strongly_convex} and is very different from that of the previous theorems.

\subsection{Coordination \& the tragedy of the commons}
\label{subsec:coordination}
We chose the terminology \emph{coordination}
    to echo the wording \emph{uncoordinated}
    in the quote of W.F.~Loyd \cite{Lloyd_1833}.
    Alternatively, we could have chosen
    the terminology \emph{cooperation},
    which is often used in the literature
    of mean field optimisation,
    to refer to the maximization of the
    cumulated profit over all agents
    (which is typically the case in a monopoly situation)
    Here, the wording \emph{coordination}
    emphasises the fact that the benefit of
    every agent is increased at an individual
    scale, and is in line with our qualitative objectives. We refer to Remark \ref{Re:Coop} for more details.

We want to prove that, when $(\lambda,c)$
and the lagrangian $L$ are chosen in such a way that
a reversed monotonous MFG travelling wave $(\cV,\Th,\M)$
(provided by Theorem~\ref{Th:small_lambda}),
it is sometimes possible to find a coordinated strategy
$\alpha_{\mathrm{co}}\in L^\infty(\R)$ such that:
\begin{enumerate}
\item Each fisherman actually obtains a higher harvest than in the competitive case,
\item The fishes' population does not go extinct, and is even invading.
\end{enumerate}
In our next result we prove that, provided $L$ is chosen carefully, this is possible. The general question (\emph{i.e.} is it always possible, whenever a reversed MFG travelling exists, to find a common strategy that outperforms the constant strategy $\overline\alpha\equiv c$?) remains open at this stage.
We make the conjecture that the answer is yes for any $L$ satisfying
\eqref{Hyp:L} and $\lambda$ small enough.

\begin{theorem}\label{Th:Cooperation}
There exist a Lagrangian $L$, a discount factor $\lambda$ and $c>0$ such that:
\begin{enumerate}
\item There exists a reversed MFG travelling wave $(c,\lambda,\M,\Th,\cV)$,
\item  There exists an explicit strategy $\alpha_{\mathrm{co}}\in L^\infty(\R)$ that satisfies:
            \begin{equation*}
                \cV(x_0)
                <
                J(x_0,\alpha_{\rm co},\th_{\alpha_{\rm co}})
                \;\text{ where }\;
                \begin{cases}
                    \partial_t\th_{\alpha_{\rm co}}
                    -\partial_{xx}^2\th_{\alpha_{\rm co}}
                    =
                    f(\th_{\alpha_{\rm co}})
                    -m_{\alpha_{\rm co}}\th_{\alpha_{\rm co}},
                    \;
                    \th_{\alpha_{\rm co}}(0)
                    =
                    \th_0,
                    \\
                    \partial_t m_{\alpha_{\rm co}}
                    +\partial_x(\alpha_{\rm co} m_{\alpha_{\rm co}})
                    =
                    0,
                    \;
                    m_{\alpha_{\rm co}}(0)
                    =
                    m_0.
                \end{cases}
            \end{equation*}
\end{enumerate}
\end{theorem}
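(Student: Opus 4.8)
The plan is to make an explicit choice of $L$, $\lambda$ and $c$ that simultaneously puts us in the regime of Theorem~\ref{Th:small_lambda} (so a reversed monotonous MFG travelling wave exists) and leaves enough "slack" to exhibit a coordinated strategy that beats the competitive one. Concretely, I would first fix a convenient target speed $c>0$ with $c<c_{\max}$, then invoke Theorem~\ref{Th:small_lambda} to get $\lambda$ small and a reversed MFG travelling wave $(c,\lambda,\M,\Th,\cV)$ with $\supp(\M)=[0,s_1]$ and $\Th$ increasing from $0$ to $1$, with $\Th'\equiv\lambda L'(c)$ on $\supp(\M)$. The key structural feature to exploit is that on the support of $\M$ the profile $\Th$ is only \emph{linear} with a \emph{small} slope $\lambda L'(c)$ (which tends to $0$ as $\lambda\to0$), so fishermen sitting in the support are harvesting a population density that is essentially constant and equal to $\Th(0)$, while $\Th$ rises to $1$ far to the right. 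Intuitively, in the competitive regime each fisherman "runs with the wave" at speed $c$ forever, paying the control cost $L(c)$ in perpetuity, for a payoff $\cV(x_0)=(\Th(x_0)-L(c))/\lambda$ on the support; a coordinated population that instead agrees to essentially stop (or move slowly) lets the front be invading again and lets everyone harvest near $\Th=1$.

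The second step is to define $\alpha_{\rm co}$ explicitly and check the two requirements. The simplest candidate is $\alpha_{\rm co}\equiv 0$ (fishermen stay put) or, slightly more robustly, $\alpha_{\rm co}$ a compactly supported profile that moves the mass $m_0=\M$ slowly to a favourable location and then holds it. With $\alpha_{\rm co}\equiv 0$ the continuity equation gives $m_{\alpha_{\rm co}}(t,\cdot)\equiv\M$ for all $t$, so $\th_{\alpha_{\rm co}}$ solves $\partial_t\th-\partial_{xx}^2\th=f(\th)-\M\th$ with a \emph{time-independent} compactly supported harvesting potential $\M$. Since $\M$ is compactly supported and (for $\lambda$ small) has small mass, the perturbed reaction $f(u)-\M(x)u$ still admits, by continuity/sub- and super-solution arguments built on Theorem~\ref{Th:WithoutFisherman}, an invading behaviour: solutions starting from $\th_0=\Th$ converge (away from the bounded harvesting zone) to a state close to $1$. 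In particular $\th_{\alpha_{\rm co}}(t,x_0)\to$ something bounded below by a constant $\kappa$ close to $\Th(0)$ — in fact, because the front now invades, $\th_{\alpha_{\rm co}}(t,x_0)$ eventually gets pushed up toward $1$ for $x_0$ in the (fixed, bounded) support. This is the quantitative heart: I must show
\[
J(x_0,0,\th_{\alpha_{\rm co}})=\int_0^\infty e^{-\lambda t}\,\th_{\alpha_{\rm co}}(t,x_0)\,dt
\;>\;\cV(x_0)=\frac{\Th(x_0)-L(c)}{\lambda}
\qquad\text{for }x_0\in\supp(\M),
\]
using $L(0)=0$ on the left side. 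Since the right side is at most $\Th(0)/\lambda<1/\lambda$ (by Lemma~\ref{Rk:max_speed} applied pointwise, $L(c)\le\Th(0)\le\Th(x_0)$ only gives an upper bound — more carefully, $\cV(x_0)=(\Th(x_0)-L(c))/\lambda$ and $\Th(x_0)\le 1$), while the left side approaches $\frac1\lambda\limsup_t\th_{\alpha_{\rm co}}(t,x_0)$ which is close to $1/\lambda$, it suffices to make $L(c)$ bounded away from $0$, i.e. to choose $c$ not too small and $L$ with $L(c)$ of definite size; then for $\lambda$ small the $e^{-\lambda t}$ weight puts almost all the mass on large $t$ where $\th_{\alpha_{\rm co}}(t,x_0)$ is close to $1>\Th(x_0)-L(c)$. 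I would carry this out by splitting the time integral at a threshold $T_0$ (chosen so that $\th_{\alpha_{\rm co}}(t,x_0)\ge 1-\delta$ for $t\ge T_0$, which holds by the stability part of the invasion result), bounding the $[0,T_0]$ part crudely below by $0$, and noting $\int_{T_0}^\infty e^{-\lambda t}(1-\delta)\,dt=(1-\delta)e^{-\lambda T_0}/\lambda$, which beats $(\Th(x_0)-L(c))/\lambda$ once $\lambda$ is small enough that $e^{-\lambda T_0}\approx 1$, provided $1-\delta>\Th(x_0)-L(c)$, i.e. $L(c)>\Th(x_0)-1+\delta$, automatically true since $\Th(x_0)\le 1$ and $L(c)>0$. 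So in fact the only genuine constraint is compatibility with Theorem~\ref{Th:small_lambda}: $c<c_{\max}$ and $\lambda$ correspondingly small; requirement (2) of the theorem (the population is even invading) comes for free from the same invasion argument.

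The main obstacle I anticipate is the invasion claim for the harvested equation $\partial_t\th-\partial_{xx}^2\th=f(\th)-\M(x)\th$ with $\M$ a fixed, bounded, compactly supported potential: I need that starting from $\th_0=\Th$ (which already satisfies $\Th(-\infty)=0$, $\Th(+\infty)=1$, $\Th>0$), the solution does not get blocked by the harvesting region but instead propagates to the right and locally converges to $1$. This is a "propagation through a localized defect" statement; it is plausible because a compactly supported, small-mass perturbation cannot create a stable intermediate state blocking the front, but a clean proof needs either a sub-solution that slides through the support (using that $\int_0^1 f>0$ with strict inequality, so there is room to absorb the $-\M\th$ loss on a bounded set) or a Liouville-type / zero-number argument. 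I would handle it with an explicit compactly-supported-perturbation sub-solution: take a slightly translated and slightly lowered copy of $U^*$, push it from the left, and check it is a sub-solution once it has cleared $\supp(\M)$, using that $f(U^*)- \M U^* \ge f(U^*) - \|\M\|_\infty U^* \mathbf 1_{\supp(\M)}$ and that outside $\supp(\M)$ it is just the unperturbed front; combined with the comparison principle and the convergence in Theorem~\ref{Th:WithoutFisherman} applied after the front has passed, this yields $\liminf_{t\to\infty}\th_{\alpha_{\rm co}}(t,x_0)\ge 1-\delta$ for any fixed $x_0$ and any $\delta>0$ (for $\|\M\|_{L^1}$, equivalently $\lambda$, small enough). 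With this in hand the payoff comparison above closes the proof; the explicit strategy in the statement is then simply $\alpha_{\rm co}\equiv 0$.
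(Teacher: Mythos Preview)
Your approach has a genuine gap at the core step. With $\alpha_{\rm co}\equiv 0$, the profile $\Th$ is a \emph{strict supersolution} of the resulting equation $\partial_t\th-\partial_{xx}^2\th=f(\th)-\M(x)\th$: plugging in the time-independent $\Th$ and using the travelling-wave equation $-\Th''-c\Th'=f(\Th)-\M\Th$ gives
\[
0-\Th''-f(\Th)+\M\Th=c\Th'>0
\]
since $c>0$ and $\Th$ is monotone increasing. Hence $\th_{\alpha_{\rm co}}(t,\cdot)\leq\Th$ for all $t\geq 0$ and is strictly decreasing in $t$. In particular, for $x_0\in\supp(\M)$ you always have $\th_{\alpha_{\rm co}}(t,x_0)\leq\Th(x_0)<1$, so the claim that $\th_{\alpha_{\rm co}}(t,x_0)$ ``eventually gets pushed up toward $1$'' is false, and the payoff comparison you sketch collapses. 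A second, independent error is the assertion that $\|\M\|_{L^1}$ is small when $\lambda$ is small: since $\Th'\equiv\lambda L'(c)$ on $\supp(\M)=[0,s_1]$, one has $s_1=(\Th(s_1)-\Th(0))/(\lambda L'(c))$; as $\lambda\to 0$ the endpoints $\Th(0)$ and $\Th(s_1)$ converge to distinct values in $(0,1]$, so $s_1\sim 1/\lambda\to\infty$, and since $\M=(f(\Th)+c\lambda L'(c))/\Th$ is bounded below on most of $[0,s_1]$, in fact $\|\M\|_{L^1}\to\infty$. The ``propagation through a localised defect'' picture therefore does not apply.

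The paper circumvents both obstacles by two devices that are worth noting. First, instead of freezing the fishermen it uses a \emph{diluting} control $\alpha_{\rm co}(t,x)=x/(2s_1+t)$, so that $m(t,\cdot)$ is a rescaled copy of $\M$ with $\|m(t,\cdot)\|_{L^\infty}\to 0$; after a finite time the equation is bounded below by $\partial_t\th-\partial_{xx}^2\th\geq f(\th)-\e\th$, and Theorem~\ref{Th:WithoutFisherman} applied to the bistable $f_\e(u)=f(u)-\e u$ yields $\th(t,\cdot)\geq 1-2\delta_\e$ on the (growing) support of $m$. Second, to send $\lambda\to 0$ without losing control of $(\Th,\M)$, the paper does \emph{not} keep $L$ fixed: it fixes $c=1$, takes a base Lagrangian $L_1(\alpha)=\frac{\underline\eta}{2}\alpha^2$ and a base discount $\lambda_0$, and then replaces $L_1$ by $L_q(\alpha)=\frac{\underline\eta}{2}|\alpha|^{2q}$ with $q=\lambda_0/\lambda$, so that $L_q(1)=L_1(1)$ and $\lambda L_q'(1)=\lambda_0 L_1'(1)$ are unchanged. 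By Remark~\ref{Re:UniformityInL} this keeps $(\Th,\M)$ and the MFG optimality intact while $\lambda\to 0$. The comparison then reads $\cV_\lambda(x_0)=\lambda^{-1}(\Th(x_0)-\frac{\underline\eta}{2})\leq\lambda^{-1}(\Th(s_1)-\frac{\underline\eta}{2})$ versus $J(x_0,\alpha_{\rm co},\th)\geq\lambda^{-1}(1-2\delta_\e)+o(\lambda^{-1})$, and one wins because $\Th(s_1)<1$.
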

\begin{remark}\label{Re:Coop}
    While \emph{mean field game}
    commonly refers to a situation where the
    agents are in competition with each others,
    its cooperative counterpart is known
    as \emph{mean field control} (MFC), see \cite{Bensoussan_2013}.
    It consists in maximising the following
    cumulative benefit
    \begin{equation*}
        \sup_{\alpha\in L^{\infty}(\R)}
        \int_{\mathbb{R}}
        J(x_0,\alpha,\th_{\alpha})
        m_0(x_0)dx_0
        \;\text{ where }\;
        \begin{cases}
            \partial_t m_{\alpha}
            +\partial_x(\alpha m_{\alpha})
            =
            0,
            \\
            \partial_t\th_{\alpha}
            -\partial_{xx}^2\th_{\alpha}
            =
            f(\th_{\alpha})
            -m_{\alpha}\th_{\alpha}.
        \end{cases}
    \end{equation*}
    The main difference between this setting and
    the MFG one \eqref{Eq:OCPlayer}
    is that $\th$ is frozen in~\eqref{Eq:OCPlayer}.
In the cooperation context, the MFC system writes    \begin{equation}\label{Eq:MFC}
        \begin{cases}
        \lambda V+\partial_t V-H(\partial_xV)
        =
        \theta(1-\eta),
        &V(\infty,\cdot)\equiv 0,
        \\
        \partial_tm
        +\partial_x\left(H'(\partial_x V) m\right)
        =
        0,
        &m(0,\cdot)
        =
        m_0,
        \\
        \partial_t \theta
        - \Delta\theta
        -f(\theta)
        +m\theta
        =
        0,
        &\theta(0,\cdot)=\theta_0,
        \\
        -\partial_t \eta
        - \Delta\eta
        -\eta(f'(\theta)-m)
        =
        m,
        &\eta(T,\cdot)=0.
\end{cases}
\end{equation}
    Under similar assumptions as in Theorem \ref{Th:Cooperation},
    we expect that the solution to the MFC system
    leads the fishes' population to be invading as well,
    but a rigorous proof is out of reach in the present work. 
    For more details on the derivation of System \eqref{Eq:MFC}
    and some theoretical results,
    in the case of a monostable nonlinearity,
    we refer to our other work \cite{KMFRB2023}, which also contains a discussion of possible governmental regulations.

\end{remark}

\subsection{Extensions: non-monotonous reversed MFG travelling waves}\label{Se:Extensions}
We study in this paragraph non-monotonous reversed MFG travelling waves, and we highlight the fact that this ``non-monotonicity" can also mean ``periodic"; we split the results accordingly;

\subsubsection{Non-monotonous solutions with an interval as the support of the fishermen density}
\label{subsec:MR_nonmono}
We begin this section with the analog of Proposition~\ref{Prop:exist_th_TW}; namely, we establish the existence of reversed travelling waves with an arbitrary number of local maxima:
\begin{proposition}
    \label{Prop:exist_th_multiple_bumps}
For any positive integer $k$, there exists a map $c_k:(0;\infty)\ni \mapsto c_k(\lambda)$ such that    \begin{enumerate}[i)]
        \item
    for any
    $c\in(0,c_k(\lambda))$,
    there exists a unique
    couple $(\Th,\M)$
    such that
    \begin{itemize}
        \item
            $\M\in L^1(\R;[0,\infty))$
            and $\supp(\M)=[0,s_1]$
            for some $s_1>0$,
        \item
            $\Th$ is piecewise $\mathscr C^2$,
            admits exactly
            $k$ bumps,
            and goes
            from $0=\displaystyle{\lim_{s\to-\infty}}\Th(s)$
            to $1=\displaystyle{\lim_{s\to\infty}}\Th(s)$,
        \item
            $\Th$ satisfies
            $-c\Th'-\mu\Th''=f(\Th) - \M\Th$
            on $\R$,
        \item
            $\Th'\equiv \lambda L'(c)$
            on $\supp(\M)$,
        \end{itemize}
    \item
        $c_k$ is continuous decreasing, with
        $\lim_{\lambda\to 0^+}c_k(\lambda)=\max(c_{\max},2\sqrt{f'(\eta)})$ and $\lim_{\lambda \to \infty}c_k(\lambda)=0$.
    \end{enumerate}
\end{proposition}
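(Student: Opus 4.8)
Proposition~\ref{Prop:exist_th_multiple_bumps} is the $k$--bump analogue of Proposition~\ref{Prop:exist_th_TW}, and the plan is to run the same phase--plane shooting construction (to which, together with Section~\ref{sec:construct_Th}, I refer), the only genuinely new ingredient being that $\Th$ is now allowed to oscillate before the fishermen start acting. I work in the planar system $\Gamma'=p,\ p'=-cp-f(\Gamma)$ attached to $-\Gamma''-c\Gamma'=f(\Gamma)$, which for $c>0$ carries the strictly decreasing Lyapunov function $E=\tfrac12p^2+F(\Gamma)$, $F(\Gamma)=\int_0^\Gamma f$. In the strip $\Gamma\in[0,1]$ its equilibria are the saddles $(0,0)$, $(1,0)$ and the sink $(\eta,0)$, a stable focus when $c<2\sqrt{f'(\eta)}$ and a stable node when $c\ge 2\sqrt{f'(\eta)}$. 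Since $F(1)=\int_0^1f>0$ by \eqref{Eq:invasive_BNL}, monotonicity of $E$ confines the unstable manifold $\Gamma_{0,c}$ of $(0,0)$ to $\{E\le0\}\subset\{0\le\Gamma<\zeta\}$, where $F(\zeta)=0$ and $\eta<\zeta<1$, and forces $\Gamma_{0,c}$ to limit onto $(\eta,0)$; when $c<2\sqrt{f'(\eta)}$ it does so by spiralling, hence has infinitely many local maxima $\Gamma_{{\rm peak},1}>\Gamma_{{\rm peak},2}>\cdots\downarrow\eta$ (all $>\eta$) and local minima increasing to $\eta$ (all $>0$, again by the energy inequality). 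Dually, $E$ confines the stable manifold of $(1,0)$ to $\{E\ge F(1)>0\}$, which avoids a neighbourhood of $(\eta,0)$, so on $\{\Gamma\in(0,1)\}$ that manifold satisfies $p^2\ge 2(F(1)-F(\Gamma))>0$: the branch entering $(1,0)$ with $\Gamma<1$ carries no critical point of $\Gamma$ and is therefore monotone.

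The construction. Fix $k$, $\lambda>0$ and $0<c<2\sqrt{f'(\eta)}$ with $\lambda L'(c)$ small. Follow $\Gamma_{0,c}$ from $s=-\infty$ through exactly its first $k$ local maxima, and let $\sigma_k(c)>0$ be the largest slope attained on the ensuing rising arc, from the $k$--th local minimum towards $\Gamma_{{\rm peak},k+1}$. If $\lambda L'(c)<\sigma_k(c)$, declare $s=0$ to be the first instant on that arc at which the slope equals $\lambda L'(c)$ \emph{while decreasing}; for $\lambda L'(c)$ small enough the ordinate $\Th(0)$ lies just below $\Gamma_{{\rm peak},k+1}$, in particular $\Th(0)>\eta$. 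On $[0,s_1]$ set $\Th(s)=\Th(0)+s\,\lambda L'(c)$ — i.e.\ the fishermen act exactly as forced by Lemma~\ref{Le:MonotComp} — and choose $s_1>0$ to be the unique length for which the endpoint $(\Th(s_1),\lambda L'(c))$ sits on the monotone branch of the stable manifold of $(1,0)$; existence and uniqueness of such an $s_1$, with $\Th(s_1)\in(\Th(0),1)$, follow from that branch being monotone and from $\lambda L'(c)$ small. For $s>s_1$ follow that stable manifold to $(1,0)$. Finally set $\M\equiv0$ off $[0,s_1]$ and, on $[0,s_1]$, $\M(s):=\bigl(f(\Th(s))+c\lambda L'(c)\bigr)/\Th(s)$, which is forced by $-\Th''-c\Th'=f(\Th)-\M\Th$ together with $\Th''\equiv0$ there.

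Verification and the map $c_k$. By construction $\Th$ is continuous, piecewise $\mathscr C^2$, and $\mathscr C^1$ at the two gluing points $s=0$ and $s=s_1$ (the slopes match, both equal $\lambda L'(c)$), with $\Th(-\infty)=0$, $\Th(+\infty)=1$ and $\Th>0$ throughout; it solves $-\Th''-c\Th'=f(\Th)-\M\Th$ on $\R$ with $\Th'\equiv\lambda L'(c)$ on $\supp(\M)=[0,s_1]$. Its local maxima are exactly the $k$ maxima of $\Gamma_{0,c}$ traversed before $s=0$: the linear piece and the final arc are monotone increasing and contribute none. Moreover $\M\ge0$ on $[0,s_1]$ because there $\Th\ge\Th(0)>\eta$, so $f(\Th)>0$; and $\M\in L^1$, being continuous on the compact $[0,s_1]$ and zero elsewhere. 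For uniqueness, any reversed travelling wave with these properties has $\Th$ equal to $\Gamma_{0,c}$ near $-\infty$ (the only orbit entering $(0,0)$ backward, since $\Th'(-\infty)=0$ is forced by the equation), is linear with slope $\lambda L'(c)$ on the interval $\supp(\M)$ by Lemma~\ref{Le:MonotComp}, and must end on the stable manifold of $(1,0)$; imposing exactly $k$ bumps and $\M\ge0$ then pins down both the turn--on instant and $s_1$, so the construction is the only one. Define $c_k(\lambda)$ as the supremum of $c$ for which it produces a solution; part i) holds by construction, and $c_k$ is continuous and decreasing in $\lambda$ since the sole $\lambda$--dependent constraints ($\lambda L'(c)<\sigma_k(c)$ and the monotone matching condition) depend continuously and monotonically on $\lambda$, while $c\mapsto\sigma_k(c)$ and $c\mapsto L'(c)$ are continuous and monotone. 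Finally $\lim_{\lambda\to\infty}c_k(\lambda)=0$ is immediate: for any fixed $c>0$, $\lambda L'(c)\to\infty$ eventually violates $\lambda L'(c)<\sigma_k(c)$.

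The remaining assertion, $\lim_{\lambda\to0^+}c_k(\lambda)=\max(c_{\max},2\sqrt{f'(\eta)})$, is the main obstacle. As $\lambda\to0$ one has $\lambda L'(c)\to0$, so the smallness and matching conditions become vacuous and the only surviving requirement is that the trajectory carry $k$ bumps and still be connectable to $(1,0)$: for $c<2\sqrt{f'(\eta)}$ the focus at $(\eta,0)$ supplies arbitrarily many bumps, which already gives $\liminf_{\lambda\to0^+}c_k(\lambda)\ge 2\sqrt{f'(\eta)}$, whereas for $c\ge2\sqrt{f'(\eta)}$ the node structure of $(\eta,0)$ caps the number of available bumps and it is the size of the overshoot $\|\Gamma_{0,c}\|_{L^\infty}$ — the very quantity appearing in \eqref{Eq:Defcmax} — together with the reach of the stable manifold of $(1,0)$ that decide whether the construction can still be closed; quantifying this, and proving the continuity and monotonicity of the resulting threshold, is where the work lies. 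Some care is also needed in the borderline regime $c\uparrow 2\sqrt{f'(\eta)}$, where the turn--on ordinate $\Th(0)$ may drop just below $\eta$: one must then check $|f(\Th(0))|\le c\lambda L'(c)$ on the whole linear piece — using \eqref{Hyp:f2} so that $|f|$ is monotone on $(\Th(0),\eta)$, and linearising at the node $(\eta,0)$ to see that the inequality holds for $\lambda$ small — so that $\M$ stays nonnegative across the sign change of $f$.
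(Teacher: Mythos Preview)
Your construction is the paper's: follow $\Gamma_{0,c}$ through its first $k$ oscillations (which needs $c<2\sqrt{f'(\eta)}$ so that $(\eta,0)$ is a spiral sink), jump off on the next rising arc at the first point past the local maximum of the slope where $\Gamma_{0,c}'=\lambda L'(c)$, continue linearly until the stable manifold of $(1,0)$ is hit, and set $\M=(f(\Th)+c\lambda L'(c))/\Th$ on that segment. The paper's own proof is in fact terser than yours---it literally replaces the global maximiser $s^*$ of $\Gamma_{0,c}'$ by its $k$-th positive local maximiser $s_k$, sets $c_k(\lambda)=\sup\{c>0:\Gamma_{0,c}'(s_k)\ge\lambda L'(c)\}$, and declares the rest identical to Proposition~\ref{Prop:exist_th_TW}.

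One place where your line is weaker than the paper's is $\M\ge0$. Relying on $\Th(0)>\eta$ only covers small $\lambda L'(c)$, as you acknowledge. The argument the paper carries over from the monotone case is sharper and works for the full range: at $s_k$ one has $\Gamma_{0,c}''(s_k)=0$ and $\Gamma_{0,c}'''(s_k)=-f'(\Gamma_{0,c}(s_k))\,\Gamma_{0,c}'(s_k)\le0$, whence $f'(\Gamma_{0,c}(s_k))\ge0$ and so $\Gamma_{0,c}(s_k)\ge\underline\eta$. Since the turn-on ordinate $\Th(0)$ lies above this and, by \eqref{Hyp:f2}, $f$ is monotone on $(\underline\eta,\eta)$, the inequality $-f(\Th(s))\le c\lambda L'(c)$ holds along the whole linear piece, giving $\M\ge0$ for every $\lambda L'(c)<\Gamma_{0,c}'(s_k)$ without any smallness restriction or borderline analysis.

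On the limit $\lim_{\lambda\to0^+}c_k(\lambda)=\max(c_{\max},2\sqrt{f'(\eta)})$: your caution is well placed, and you should know that the paper's proof does not address it either. In fact, since for $k\ge1$ the construction hinges on the spiral at $(\eta,0)$ and hence on $c<2\sqrt{f'(\eta)}$, the threshold $c_k(\lambda)$ cannot exceed $2\sqrt{f'(\eta)}$; your attempt to push into the node regime $c\ge2\sqrt{f'(\eta)}$ is chasing what appears to be a misprint in the statement rather than a genuine phenomenon.
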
 

The main result of this section reads as follows:
\begin{theorem}\label{Th:exist_nonmono}
    Let $k\in \N^*$.    There exists a map 
    $\lambda \mapsto \e_k(\lambda)>0$
    such that,
    for any $\lambda>0$ and any $c\in(0,\e_k(\lambda))$
    there exists a unique
    reversed MFG travelling wave
     that has exactly $k$ local maxima and such that $\mathrm{supp}(\M)$ is an interval.
\end{theorem}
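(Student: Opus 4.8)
The plan is to mirror the strategy used for the monotone case (Theorem~\ref{Th:Main}), using Proposition~\ref{Prop:exist_th_multiple_bumps} as the analogue of Proposition~\ref{Prop:exist_th_TW}. Fix $k\in\N^*$ and $\lambda>0$. By Proposition~\ref{Prop:exist_th_multiple_bumps}, for every $c\in(0,c_k(\lambda))$ there is a unique couple $(\Th_c,\M_c)$ with $\supp(\M_c)=[0,s_1(c)]$, with $\Th_c$ having exactly $k$ bumps, satisfying the profile equation and the slope constraint $\Th_c'\equiv\lambda L'(c)$ on $\supp(\M_c)$. The only thing that remains to turn such a couple into a genuine reversed MFG travelling wave (Definition~\ref{De:ReversedMFGTW}) is to check that the constant control $\overline\alpha\equiv c$ is optimal in the control problem \eqref{Eq:DefVSV} for every $s_0\in\supp(\M_c)$. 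So the theorem reduces to: there exists $\e_k(\lambda)>0$ such that for $c<\e_k(\lambda)$, $\overline\alpha\equiv c$ is optimal.

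First I would verify the necessary first-order condition built into the construction: on $\supp(\M_c)$ one has $\cV_c'=L'(c)$, hence $H'(\cV_c')=c$, which is exactly the Euler--Lagrange/HJB compatibility of the constant control; this is the content of the last line of \eqref{Eq:MFG_s} and is automatic from Proposition~\ref{Prop:exist_th_multiple_bumps}. Next, I would set up the value function $\cV_c$ by solving the stationary HJB equation $\lambda\cV_c+c\cV_c'-H(\cV_c')=\Th_c$ with $\cV_c(+\infty)=0$, define the candidate trajectory $s_{\overline\alpha}(t)=s_0$ (the constant control in similarity variables keeps the fisherman fixed at $s_0$), and then prove a verification-type statement: $\overline\alpha\equiv c$ achieves $\cV_c(s_0)$. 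The key comparison is to control the running payoff $\Th_c(s_\alpha(t))-L(\alpha(t))$ along any competitor trajectory. As in the monotone case, the crucial analytic input is that $\Th_c$ is close to the heteroclinic profile of the pure front as $c\to0$, and that the slope $\lambda L'(c)$ on $\supp(\M_c)$ together with the length $s_1(c)$ degenerate in a controlled way, so that a Taylor expansion of $\mathcal J$ around $\overline\alpha$ (second-order Gateaux derivative, as in the proof of Lemma~\ref{Le:NoDirac} but pushed one order further) is uniformly negative-definite for $c$ small; this forces local, then global, optimality by convexity of $L$ and the sign structure of $\Th_c'$.

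The main obstacle, and the place where the non-monotone case genuinely differs from Theorem~\ref{Th:Main}, is that $\Th_c$ is no longer increasing: it has $k$ local maxima, so a competitor fisherman who drifts to the left (in the $s$ variable, i.e. moves faster than $c$ in $x$) may reach a higher value of $\Th_c$ before paying too much in the $L$-cost. One must show that the discounted gain from visiting a neighbouring bump is always outweighed by the transport cost, uniformly for $s_0\in\supp(\M_c)$. I would handle this by exploiting that, as $c\to0$, the ``valleys'' between bumps have depth bounded below (the oscillations of $\Th_c$ are governed by the phase portrait near the center equilibrium $\eta$, independent of $c$ at leading order), while reaching another bump requires a displacement of order $1$ and hence a cost bounded below by a positive constant; meanwhile the height gain one could hope for is $O(1)$ but the discounting makes the net effect of any such excursion negative once $L'(c)\to0$, i.e.\ once $c$ is small enough that staying put is cheap. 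Quantifying ``small enough'' uniformly in the admissible competitor controls — not just in directions $h_\pm\equiv\pm1$ as in Lemma~\ref{Le:NoDirac}, but against all of $L^\infty(\R)$ — is the technical heart; I expect it to follow from a compactness/continuity argument in $c$ combined with the strict optimality at $c=0$ (where $\overline\alpha\equiv0$ and the payoff is manifestly maximised by staying at a maximum of the limiting profile), yielding the existence of the threshold $\e_k(\lambda)$ without an explicit formula, exactly as for $\e_0(\lambda)$ in Theorem~\ref{Th:Main}. Uniqueness of the reversed MFG travelling wave with $k$ maxima and interval support then comes from the uniqueness in Proposition~\ref{Prop:exist_th_multiple_bumps} together with uniqueness of the optimal control under \eqref{Hyp:L}.
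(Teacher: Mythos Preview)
Your broad plan---mirror Theorem~\ref{Th:Main} using Proposition~\ref{Prop:exist_th_multiple_bumps}---is the right one, and you correctly isolate the only remaining task (optimality of $\overline\alpha\equiv c$) and the genuine new obstacle (loss of monotonicity of $\Th$). However, the mechanism you propose for optimality does not work, and you miss the actual new ingredient the non-monotone case requires.

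First, the second-order/concavity route. The Hessian of $\mathcal J$ at $\overline\alpha$ involves $\int_0^\infty e^{-\lambda t}\Th''(s_0)\bigl(\int_0^t h\bigr)^2\,dt$; concavity would need $\Th''$ to be dominated by $L''$. But $\Th''$ does \emph{not} vanish as $c\to 0$: the oscillatory part of $\Th$ converges to $\Gamma_{0,0}$, whose second derivative is $O(1)$. This is precisely why the concavity approach (Theorem~\ref{Th:Quadratique}) needs strongly convex $L$ and \emph{large} $\lambda$, the opposite regime from here. So ``uniformly negative-definite for $c$ small'' is false, and convexity of $L$ alone cannot promote local to global optimality. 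Your compactness fallback (``strict optimality at $c=0$'') is also not enough as stated: you would need a quantitative gap uniform over all competitors, and the limiting problem is degenerate (the starting point converges to a critical point of a profile that still has higher bumps nearby).

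The engine the paper actually uses, both in Theorem~\ref{Th:Main} and here, is the comparison with the linear extension $\Tht$ via Lemma~\ref{Lem:compare_Tht}. That lemma converts the question into controlling the time a competitor trajectory spends in the region $\{s<s_-(\lambda,c)\}$ where $\Th>\Tht$, and showing that the discounted loss there dominates any gain. You do not mention this device; without it, your argument has no replacement for the key inequality.

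Finally---and this is the point where non-monotonicity genuinely bites---in Theorem~\ref{Th:Main} one restricts to $\alpha\geq 0$ for free, because replacing $\alpha$ by $\alpha_+$ increases $\Th(s_\alpha)$ and decreases $L(\alpha)$. When $\Th$ is not monotone this replacement fails, and you need another way to bound competitor controls from below. The paper obtains this through a Lipschitz estimate on the value function $\cV$ (Lemma~\ref{lem:V_Bernstein}, proved by a Bernstein-type regularisation of the HJB equation), which yields an \emph{a priori} lower bound $\alpha^*\geq\underline c$ for any optimal control. With that in hand, the argument of Theorem~\ref{Th:Main} goes through with $\underline c$ in place of $0$ and a suitably redefined stopping time $\overline T(\alpha)$. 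Your ``cost of visiting a neighbouring bump'' heuristic does not supply this bound and is not how the proof proceeds.
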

We refer to Fig.~\ref{fig:sol_nonmono}
for a graphical representation
of a non-monotonous MFG travelling wave.

As in the case of monotonous reversed travelling waves, we might look into the sharpness of $c_k$ as an upper bound for the existence of a reversed MFG travelling wave. Similar to Theorem~\ref{Th:Quadratique} we obtain 
\begin{theorem}\label{Th:Quadratique2}

    Assume that $L$ is $\mathscr C^2$
    and $L''\geq \underline D$
    for some positive constant $\underline D$.
    There exists $\lambda_k(\underline D)>0$
    such that, for any $(c,\lambda)$ satisfying
    \begin{equation*}
        \lambda\geq\lambda_k(\underline D)
        \qquad
        \text{ and }
        \qquad
        0<c\leq c_k(\lambda),
    \end{equation*}
    there exists a unique non-monotonous 
    reversed MFG travelling wave with exactly $k$ bumps, 
    with velocity $c$, a discount factor $\lambda$ and such that $\mathrm{supp}(\M)$ is an interval.

\end{theorem}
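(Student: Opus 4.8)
The plan is to mirror, in the multi-bump setting, the strategy behind Theorem~\ref{Th:Quadratique}, upgrading Proposition~\ref{Prop:exist_th_multiple_bumps} (which produces the candidate quadruplet $(\lambda,c,\Th,\M)$ with $k$ bumps and $\mathrm{supp}(\M)=[0,s_1]$ an interval) to a genuine reversed MFG travelling wave. Concretely, given such a quadruplet, I need to build $\cV$ solving the HJB equation $\lambda\cV+c\cV'-H(\cV')=\Th$ on $\R$ together with the boundary condition $\cV(+\infty)=0$, and then verify that the constant control $\overline\alpha\equiv c$ is optimal in \eqref{Eq:DefVSV} for every $s_0\in\mathrm{supp}(\M)=[0,s_1]$. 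On $[0,s_1]$ the constraint $H'(\cV')\equiv c$, i.e.\ $\cV'\equiv L'(c)$, is forced, which is consistent with $\Th'\equiv\lambda L'(c)$ there by the HJB equation; off the support one solves the HJB ODE as a (viscosity) first-order equation. The key point, exactly as in the monotonous case, is that optimality of $\overline\alpha$ reduces to a one-dimensional verification: the trajectory $s_\alpha(t)=s_0-ct$ associated to the control $\alpha\equiv c$ drifts to $-\infty$, and one must show no competing control does better.

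First I would set up the verification argument. The natural sufficient condition for $\overline\alpha\equiv c$ to be optimal, in feedback form and under a strongly convex $L$, is a bound on the "curvature" of $\Th$ relative to $\lambda$: morally, if $\lambda$ is large enough that the discounting dominates the spatial variation of $\Th$ over the relevant length scales, then moving is never worth its Lagrangian cost, and staying put (in similarity variables, drifting with the wave) is optimal. This is precisely where $\lambda\geq\lambda_k(\underline D)$ enters: the threshold $\lambda_k$ must be chosen in terms of $\underline D$ (the lower bound on $L''$, equivalently an upper bound on $H''$), the uniform bounds on $\|\Th\|_{\mathscr C^1}$, $\|\Th''\|_{L^\infty}$ away from jump points, and the geometry of the $k$-bump profile produced by Proposition~\ref{Prop:exist_th_multiple_bumps} for $c\leq c_k(\lambda)$. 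I would first establish, uniformly for $c\in(0,c_k(\lambda)]$ and $\lambda\geq\lambda_k$, a priori $\mathscr C^1$ and piecewise-$\mathscr C^2$ bounds on $\Th$ and on $s_1=s_1(\lambda,c)$ (the length of the support), since the verification estimate needs to be uniform in $s_0$ ranging over $[0,s_1]$.

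The core estimate is a Gronwall/comparison argument showing $\mathcal J(\alpha,s_0,\Th)\leq\cV(s_0)=\mathcal J(\overline\alpha,s_0,\Th)$ for all admissible $\alpha$. I would differentiate $t\mapsto e^{-\lambda t}\cV(s_\alpha(t))$ along an arbitrary trajectory, use the HJB equation $\lambda\cV+(c-\alpha)\cV'\geq\Th-L(\alpha)$ pointwise (the inequality being equality exactly when $\alpha=H'(\cV')$, which on $\mathrm{supp}(\M)$ is $\alpha=c$), integrate from $0$ to $\infty$, and use $\cV(+\infty)=0$ together with the fact that any admissible trajectory starting in $[0,s_1]$ spends only finitely much (discounted) time near the bumps. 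The subtlety specific to the multi-bump case is that $\Th$ is non-monotone, so $\cV$ need not be monotone and the "$\Th$ increasing" shortcut used for Theorem~\ref{Th:Main} is unavailable; instead one leans on $L''\geq\underline D$ to control $H'$ and close the estimate quantitatively, just as in Theorem~\ref{Th:Quadratique}. Uniqueness of the reversed MFG travelling wave with $k$ bumps and interval support follows from the uniqueness clause in Proposition~\ref{Prop:exist_th_multiple_bumps} (which pins down $(\Th,\M)$ from $(\lambda,c)$) together with uniqueness of the viscosity solution $\cV$ of the HJB equation with the prescribed behaviour at $+\infty$.

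The main obstacle I anticipate is obtaining the verification estimate \emph{uniformly} over the family of $k$-bump profiles, because near each of the $k-1$ interior local minima $\Th$ dips down and the "reward landscape" $s\mapsto\Th(s)$ has several basins; a naive bound would let a clever control shuttle between bumps to exploit the higher values. Ruling this out requires either a careful global analysis of $\cV$ on all of $\R$ (showing $\cV'$ stays in the range where $H'(\cV')$ does not exceed $c$, so the feedback never pushes a trajectory "uphill" into a better bump) or, more robustly, a direct quantitative comparison exploiting that for $\lambda\geq\lambda_k(\underline D)$ the discounted value of reaching a distant bump is strictly less than the Lagrangian cost of the detour — and this cost-vs-reward inequality must be made uniform in $k$-dependent geometric quantities (inter-bump distances, bump heights) which themselves come from the phase-plane construction in Section~\ref{subsec:MR_nonmono}. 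Packaging those geometric bounds into a clean choice of $\lambda_k(\underline D)$ is the technical heart of the proof.
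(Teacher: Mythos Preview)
Your verification/HJB route is genuinely different from what the paper does, and while not incorrect in spirit, it misses the mechanism by which the hypotheses $\lambda$ large and $L''\geq\underline D$ enter, which is the actual content of the theorem. The paper's proof (which treats Theorems~\ref{Th:Quadratique} and~\ref{Th:Quadratique2} simultaneously) never constructs or analyses $\cV$ at all; instead it works directly with the functional $\alpha\mapsto J(x_0,\alpha)$ and shows that for $\lambda$ large enough this functional is \emph{strictly concave} in $\alpha$. The argument is short: one computes the second Gateaux derivative
\[
\ddot J(x_0,\alpha)[h,h]=\int_0^\infty e^{-\lambda t}\Bigl(\Th''(x_\alpha(t)-ct)\Bigl(\int_0^t h\Bigr)^2-L''(\alpha)h^2\Bigr)\,dt,
\]
bounds $\|\Th''\|_{L^\infty}\leq M_k$ uniformly over all $\lambda>0$ and $c\in(0,c_k(\lambda))$ (this uses $c_k\leq 2\sqrt{f'(\eta)}$ and the equation), and then applies the elementary weighted Hardy inequality $\int_0^\infty e^{-\lambda t}v^2\leq (4/\lambda^2)\int_0^\infty e^{-\lambda t}(v')^2$ for $v(0)=0$. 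This yields $\ddot J[h,h]\leq(4M_k/\lambda^2-\underline D)\int e^{-\lambda t}h^2$, negative once $\lambda^2>4M_k/\underline D$. Since $\overline\alpha\equiv c$ is a critical point (the first-order condition reduces exactly to $\Th'\equiv\lambda L'(c)$ on $\mathrm{supp}(\M)$, which holds by construction), concavity makes it the global maximiser.

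The point is that this concavity argument completely bypasses the ``shuttling between bumps'' obstacle you flag: non-monotonicity of $\Th$ is irrelevant once one has a uniform bound on $\Th''$, and no geometric control on inter-bump distances or heights is needed. Your verification approach, by contrast, does not make visible where $\lambda$ large and $L''\geq\underline D$ are used --- the HJB inequality $\lambda\cV+(c-\alpha)\cV'\geq\Th-L(\alpha)$ you write down holds for any $\lambda>0$ and any convex $L$, so by itself it cannot distinguish the regime where $\overline\alpha$ is optimal from the regime where it is not. The missing idea in your sketch is precisely the second-variation/Hardy step; without it you are left trying to control the global shape of $\cV$ on a non-monotone landscape, which, as you correctly anticipate, is hard.
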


\begin{figure}[h!]
\begin{center}
\includegraphics[scale=0.12]{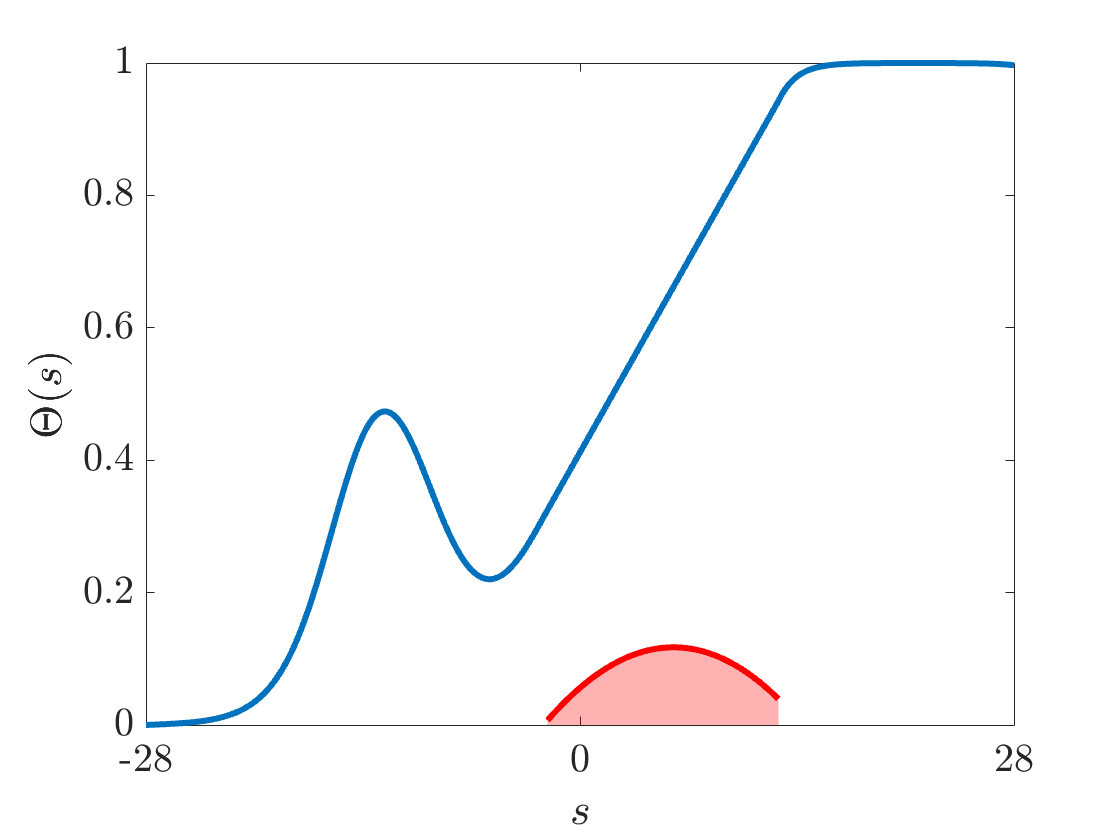}
\includegraphics[scale=0.12]{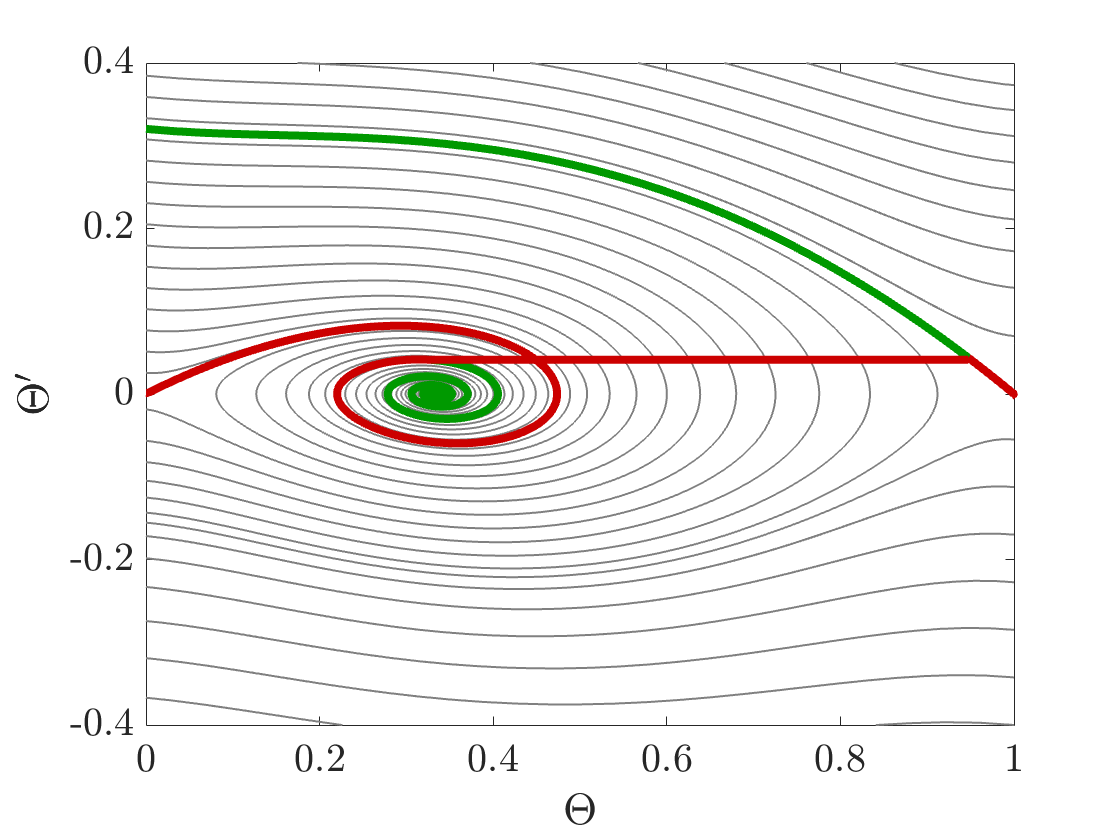}
\includegraphics[scale=0.12]{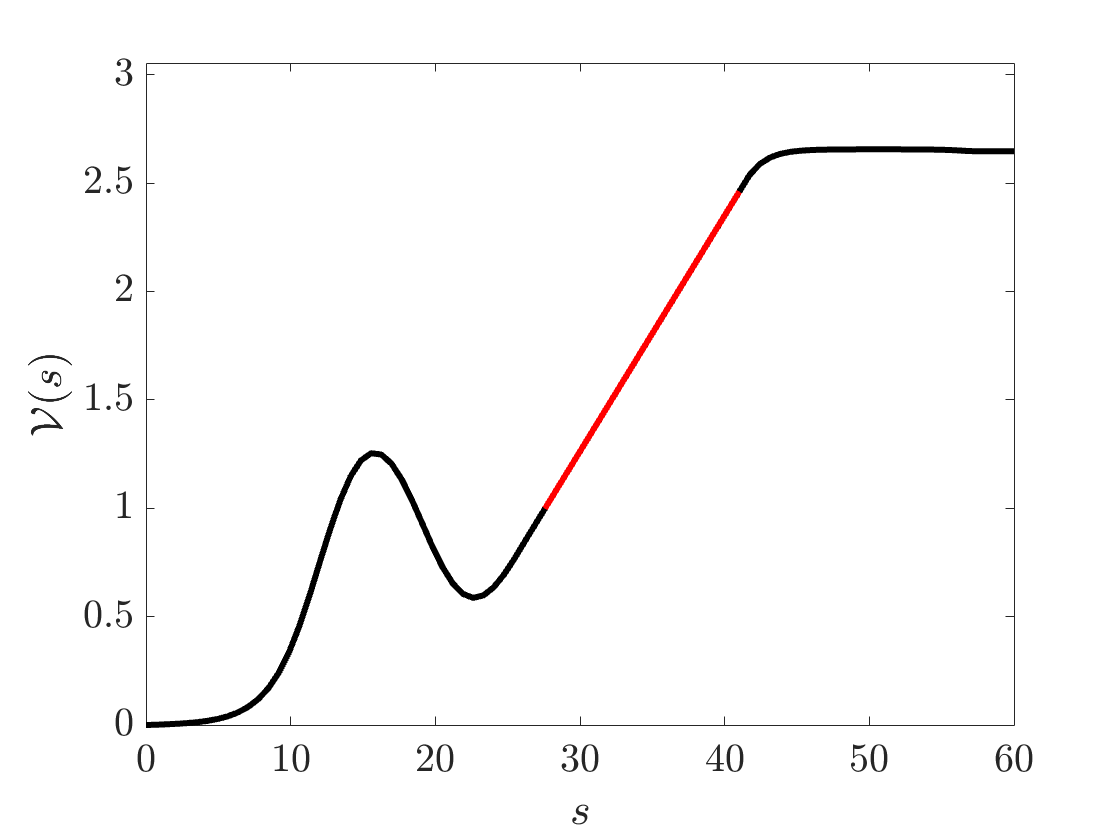}
\end{center}
\caption{
    \label{fig:sol_nonmono}
    Representation of a non-monotonous
reversed MFG travelling wave:
$\Th$ in blue and $\M$ in red
(left); construction  in the phase portrait (center); representation of the value function (right), the red segment where the support of $\mathcal{M}$ is located,  $\lambda=0.39$.}
\end{figure}

\subsubsection{Periodic solutions}
\label{subsec:MR_periodic}
In this section,
we are interested in periodic solutions
so that we drop the assumption that $\Th$
has limits 0 and 1 at $s=\pm\infty$.

As will be clear when building non-monotonous solutions, the basic idea behind the construction of periodic reversed MFG travelling waves is to ``copy and paste" parts of a non-monotonic solutions, see Fig.~\ref{fig:sol_periodic}
for a graphical representation.
This leads to the following
theorems, the proof of which is a straightforward adaptation of 
Theorem~\ref{Th:exist_nonmono}.
\begin{theorem}\label{Th:exist_periodic}
    
    For any $\lambda>0$, for any $c\in(0,\e_1(\lambda))$
    (where $\e_1$ is defined in Theorem 
   ~\ref{Th:exist_nonmono})
    there exists a
    periodic MFG travelling wave (in the sense that $\Th$ and $\M$ are periodic in space).
\end{theorem}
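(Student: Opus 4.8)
The plan is to reproduce, in the phase plane, the elementary block used in the construction of Theorem~\ref{Th:exist_nonmono} (equivalently Proposition~\ref{Prop:exist_th_multiple_bumps}) and to iterate it \emph{identically}, instead of attaching it, on one side, to the unstable manifold of $(0,0)$ and, on the other, to the stable manifold of $(1,0)$. Fix $\lambda>0$ and $c\in(0,\e_1(\lambda))$; since the construction of Theorem~\ref{Th:exist_nonmono} produces profiles that oscillate near $(\eta,0)$ one has $\e_1(\lambda)\le 2\sqrt{f'(\eta)}$, so $c<2\sqrt{f'(\eta)}$ and, for the autonomous equation $\Th''+c\Th'+f(\Th)=0$, the point $(\eta,0)$ is a stable focus: trajectories starting near it wind clockwise around $(\eta,0)$ while spiralling inwards. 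Recall that the two building blocks of that construction are, with $\kappa:=\lambda L'(c)>0$: \emph{controlled arcs}, horizontal segments contained in $\{\Th'=\kappa\}$, along which $\Th$ is affine of slope $\kappa$ and $\M=(f(\Th)+c\kappa)/\Th$; and \emph{free arcs}, pieces of trajectories of $\Th''+c\Th'+f(\Th)=0$ (where $\M\equiv 0$).

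First I would build a single closed loop $\gamma_c$. Pick $\Th^R$ slightly larger than $\eta$ and let the free flow run from the point $(\Th^R,\kappa)$: by the focus structure it performs essentially one clockwise turn around $(\eta,0)$ and first returns to the line $\{\Th'=\kappa\}$ at a point $(\Th^L,\kappa)$ with $0<\Th^L<\eta<\Th^R<1$ (the inward spiralling forces $\Th^L<\Th^R$). Close $\gamma_c$ by the controlled arc joining $(\Th^L,\kappa)$ rightwards to $(\Th^R,\kappa)$; since this arc sits at height $\Th'=\kappa$, the resulting curve is $\mathscr C^1$ in the $(\Th,\Th')$-plane and only $\Th''$ jumps at the two junctions, which is consistent with $\M$ being a bounded (atomless, cf. Lemma~\ref{Le:NoDirac}) density. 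No shooting is needed here: \emph{every} admissible $\Th^R$ yields a closed loop. Taking $\Th^R-\eta$ small enough I would guarantee that $\gamma_c$ stays inside a compact subset of $\{0<\Th<1\}$ on which $f(\Th)+c\kappa\ge 0$ (a one-sided neighbourhood of $[\eta,1]$), so that $\M\ge 0$ on the controlled arc and $\Th>0$ along $\gamma_c$; the controlled arc is then a genuine non-degenerate interval, of $s$-length $s_1:=(\Th^R-\Th^L)/\kappa>0$.

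Next I would let $(\Th,\M)$ be the functions on $\R$ obtained by running along $\gamma_c$ indefinitely; they are periodic, of period equal to the $s$-length of $\gamma_c$ (the duration of the free arc plus $s_1$), with $\Th>0$, $\M\ge 0$ supported on a periodic union of intervals, $-\Th''-c\Th'=f(\Th)-\M\Th$ on $\R$, and $\Th'\equiv\lambda L'(c)$ on $\supp(\M)$ --- the periodic analogue of \eqref{Eq:MFG_s}. It then remains to check the genuinely game-theoretic requirement of Definition~\ref{De:ReversedMFGTW}, namely that the constant control $\overline\alpha\equiv c$ is optimal in \eqref{Eq:DefVSV} for every $s_0\in\supp(\M)$, where $\cV$ is now taken to be the unique bounded, Lipschitz, periodic viscosity solution of $\lambda\cV+c\cV'-H(\cV')=\Th$. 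Here I would re-use \emph{verbatim} the optimality verification performed in the proof of Theorem~\ref{Th:exist_nonmono}: it is local in nature, using only that on $\supp(\M)$ the profile $\Th$ is affine of slope $\lambda L'(c)$ with $\cV'=L'(c)$ there, together with comparison estimates bounding the gain of any competing feedback in terms of the oscillation of $\Th$ and of the strict convexity/superlinearity \eqref{Hyp:L} of $L$; none of this uses the behaviour of $\Th$ at $\pm\infty$, and a periodic $\Th$ (which lacks the monotone rise to $1$ of the non-monotone profiles) only makes ``staying put'' more advantageous. Hence the very same threshold $\e_1(\lambda)$ is admissible, and $(\lambda,c,\Th,\M,\cV)$ is a periodic reversed MFG travelling wave for every $c\in(0,\e_1(\lambda))$.

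The main obstacle is the last step: transporting the optimality estimates of Theorem~\ref{Th:exist_nonmono} to the periodic profile carefully enough to invoke the \emph{same} $\e_1(\lambda)$, and --- a minor but necessary point --- defining the periodic value function $\cV$ and checking it has the claimed regularity. The phase-plane part (closing $\gamma_c$, ensuring $\M\ge 0$, periodic extension, verifying the ODE) is, by contrast, routine given Proposition~\ref{Prop:exist_th_multiple_bumps}.
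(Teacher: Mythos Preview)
Your proposal is essentially the paper's own strategy: the paper does not give a proof of this theorem, merely stating that it is ``a straightforward adaptation of Theorem~\ref{Th:exist_nonmono}'' and that the construction amounts to ``copy and paste'' parts of the non-monotone solution (Fig.~\ref{fig:sol_periodic}). Your phase-plane closed loop --- a free arc winding once around the spiral sink $(\eta,0)$, closed by a controlled horizontal segment at height $\Th'=\lambda L'(c)$ --- is precisely this copy-and-paste idea, and your plan to transport the optimality verification from Theorem~\ref{Th:exist_nonmono} is exactly what the paper asserts one should do.

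Two small cautions. First, in your loop the free arc \emph{arrives} at the left endpoint $(\Th^L,\kappa)$ of the controlled segment with $\Th''(0^-)>0$ (the trajectory crosses $\Th'=\kappa$ from below), whereas in the non-monotone construction one has $\Th''(0^-)<0$ (Lemma~\ref{Lem:Th''}); this reverses the local comparison with the linear $\Tht$ near $s=0$, so the argument of Section~\ref{sec:general_case} cannot be reused ``verbatim'' as you write --- it needs the (indeed straightforward) adaptation the paper alludes to, for instance anchoring the comparison at the right junction $s_1$ instead. Second, your assertion that the optimality proof in Theorem~\ref{Th:exist_nonmono} ``uses none of the behaviour of $\Th$ at $\pm\infty$'' is not quite accurate: that proof exploits $s_-(\lambda,c)\to-\infty$ and $\Th(s_-)\to 0$ as $c\to 0$ via Lemma~\ref{lem:Gamma_cvg}. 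In the periodic setting these are replaced by the boundedness $\Th\leq\Th_{\max}<1$, which, as you correctly note, only makes deviating less attractive; but the substitution must actually be carried out rather than waved at.
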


\begin{figure}[h!]
\begin{center}
\includegraphics[scale=0.12]{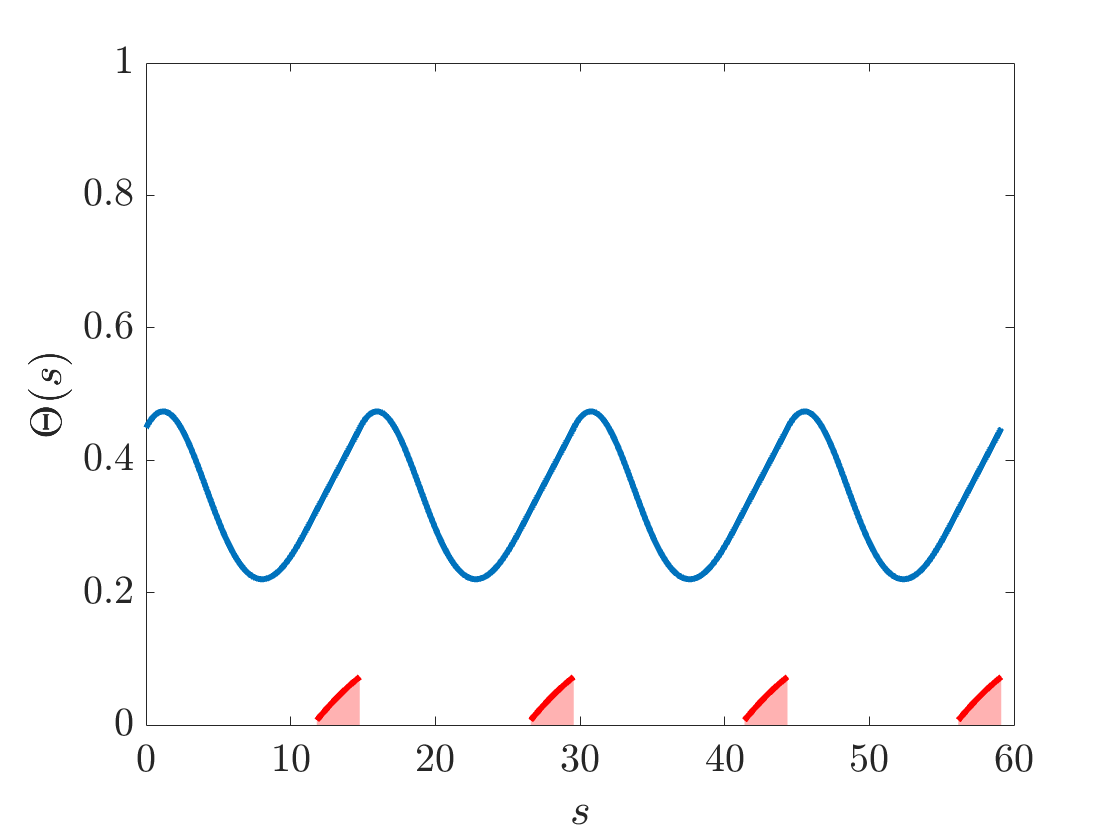}
\includegraphics[scale=0.12]{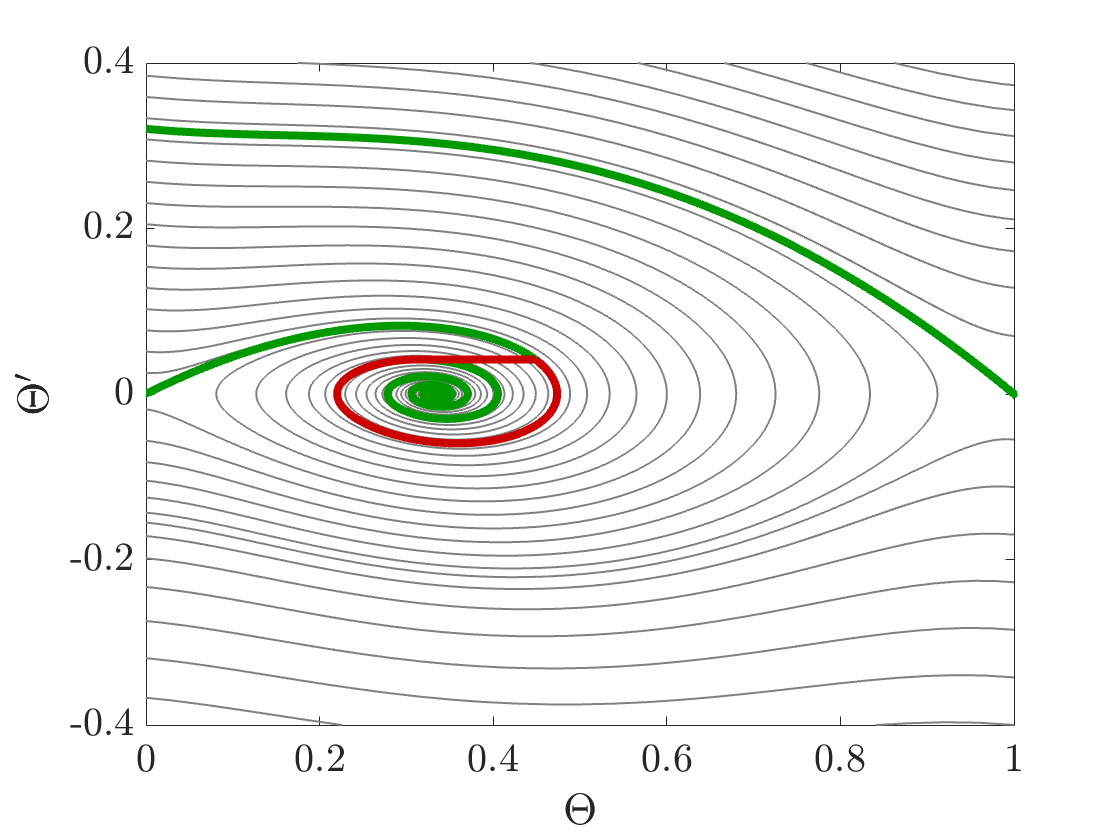}
\includegraphics[scale=0.12]{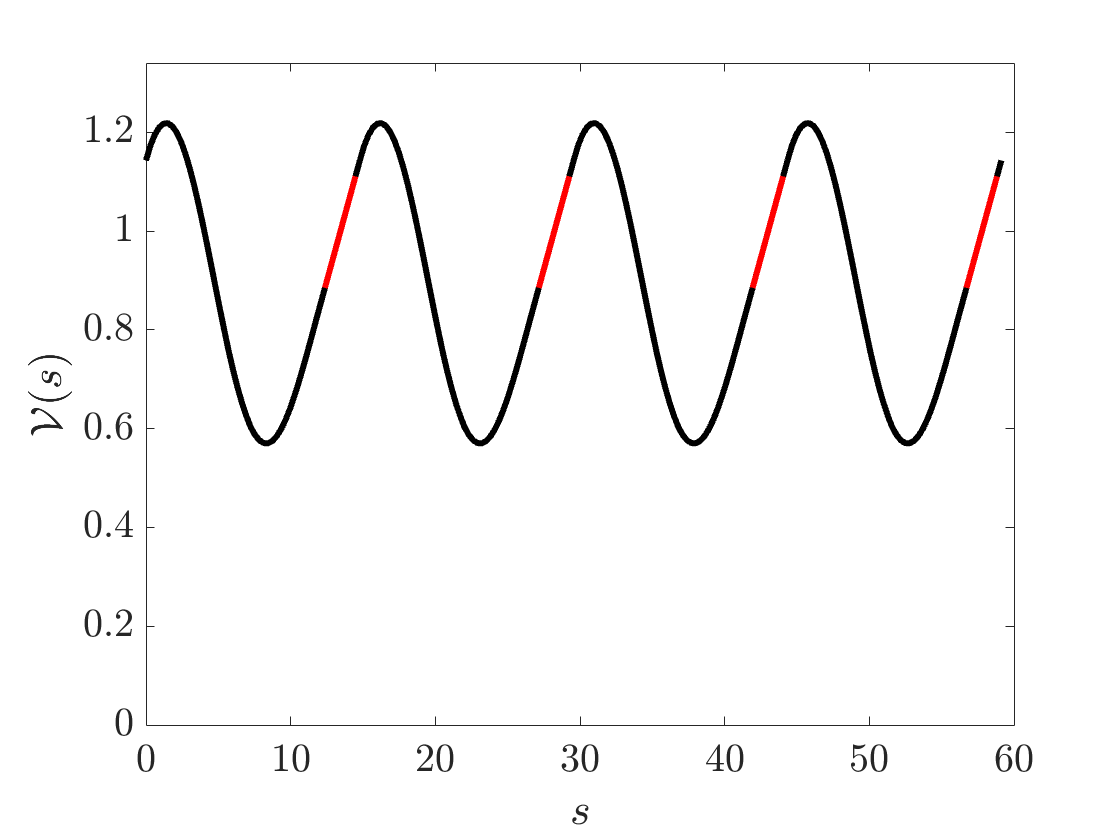}
\end{center}
\caption{
    \label{fig:sol_periodic}
    Representation of a periodic
reversed MFG travelling wave:
$\Th$ in blue and $\M$ in red
(left); Construction in the phase portrait; value function  (right) the red segment where the support of $\mathcal{M}$ is located, $\lambda=0.39$.}
\end{figure}

\subsection{Bibliographical references}
\label{subsec:related_works}
Our article both fits in three main mathematical streams. The first one is, naturally, the study of spatial ecology through the properties of travelling waves. It would be pointless to try and give an exhaustive list of references dealing with this topic, but let us underline that, in this paper, we focus on the basic theory, as exposed in the classic monograph of Fife \cite{Fife1979}; we refer for instance to \cite{Garnier_2012} for a recent detailed study of the dynamics of travelling waves. 
The second one is Mean-Field Games theory \cite{MR2346927,MR2352434,MR2269875,MR2271747,MR2295621}, which has in the past two decades become one of  the main fields of applied partial differential equations. In our paper, the main difference with the usual literature devoted to the study of MFG is of course the presence of an additional equation, corresponding to the fishes' population. As we alluded to earlier, our paper is not the first to investigate possible links between MFG systems and travelling waves: there is a growing literature devoted to the study of travelling waves in Boltzmann type MFG models for knowledge growth arising in economics sciences \cite{Burger_2017,Papanicolaou_2021,Porretta_2022,Qin_2019}. Here again, our model strongly differs, as do its applications. As far as we are aware, the only other instance of coupling of reaction-diffusion equations and Mean Field Games is \cite{e4bc8ffa37dd44639b7657ba22baab64}; however, from what we could infer from the set of slides \cite{e4bc8ffa37dd44639b7657ba22baab64} our point of view is quite different.

The third field is that of optimal control problems in population dynamics. This field is rapidly growing, and several recent contributions have, specifically, investigated some qualitative properties of optimal harvest problems.
Typically, in \cite{Bressan_2013,Coclite_2017}, some optimal control problems for the harvesting of populations governed by elliptic equations are analysed in details, using a measure solution paradigm. More recently, some attention has been devoted to the interplay between the optimal control of parabolic equations in unbounded domains and the underlying travelling waves dynamics. In particular, Bressan, Chiri and Salehi \cite{Bressan} have provided an in-depth description and explanation of the possibility to reverse travelling waves for monostable or bistable equations using phase-portrait methods reminiscent of the ones we use here. In the recent work \cite{almeida:hal-03811940}, Almeida, L\'eculier, Nadin and Privat also tackled the problem of knowing whether or not it was possible, using a control acting as a harvesting term, to block the invasion of a pest. These articles, while featuring a notion of optimality (in particular, minimising a certain norm of the control), do not however provide conclusive answers to the question under scrutiny here, since they do not take into account the optimality from the harvesters' perspectives. As far as we are aware, our previous paper \cite{Mazari_2022} was the first systematic analysis of such optimal distributed harvest problems from the point of view of Nash equilibria for a finite number of players, albeit in the case of elliptic, monostable equations set in bounded domains. While we also explored some facet of the tragedy of the commons in \cite{Mazari_2022}, both the methods and the qualitative flavour of results presented here greatly differ from that of \cite{Mazari_2022}.

\section{Construction of possible $(\lambda,c,\Th,\M)$}
\label{sec:construct_Th}
This section is devoted to the construction of possible candidates $(\lambda,c,\Th,\M)$ that could provide reversed MFG travelling waves. In particular, we prove Proposition~\ref{Prop:exist_th_TW} as well as its extension to the non-monotonous case, Proposition~\ref{Prop:exist_th_multiple_bumps}. Recall that for a fixed $c>0$ our goal is to find couple $(\Th,\M)$, with $\M\in L^1(\R)$ compactly supported, that solves the dynamical system
\begin{equation}\label{Eq:ODE}
\begin{cases}
\frac{d}{ds} \begin{pmatrix} \Th\\ \Th' \end{pmatrix} = \begin{pmatrix}\Th'\\ -f(\Th)-c\Th'+\M\Th \end{pmatrix}\\
\begin{pmatrix}
\Th(-\infty)=0\\
\Th'(-\infty)=0
\end{pmatrix},\quad \begin{pmatrix}
\Th(+\infty)=1\\
\Th'(+\infty)=0
\end{pmatrix},
\end{cases}
\end{equation} where $s=x-ct$ is the similarity variable.
 
 The main tool we use is a phase plane analysis of the underlying dynamical systems when $\M\equiv 0$; the control $\M$ will be used as a control so as to ``glue" together certain parts of this phase portrait.
 Standard tools as the phase plane analysis of bistable equations
 may be used  (see \emph{e.g.} \cite{FifeMcLeod}),
 we give proofs in order to be as self contained as possible.

\subsection{Phase portrait analysis when $\M\equiv 0$}
\paragraph{Goal of the section}
We consider a fixed $c>0$; our goal is to prove the existence of two trajectories $\Gamma_{0,c}\,, \Gamma_{1,c}$ that satisfy the following differential equations:
\begin{equation}\label{Eq:MB}
\begin{cases}
-\Gamma_{0,c}''-c\Gamma_{0,c}'=f(\Gamma_{0,c})&\text{ in }\R\,, 
\\ 0<\Gamma_{0,c}<1\,,
\\ \Gamma_{0,c}(-\infty)=0,\Gamma_{0,c}'(-\infty)=0\end{cases}\end{equation}
and 
\begin{equation}\label{Eq:ME}
\begin{cases}
-\Gamma_{1,c}''-c\Gamma_{1,c}'=f(\Gamma_{1,c})&\text{ in }\R\,, 
\\0< \Gamma_{1,c}<1\,,
\\ \Gamma_{1,c}(+\infty)=1\,, \Gamma_{1,c}'(+\infty)=0.\end{cases}\end{equation} 

 Once the existence of these two solutions is established, the strategy to build control $\M$ is transparent: first, starting from $s=-\infty$, we simply follow the trajectory $\Gamma_{0,c}$ (\emph{i.e} we set $\M=0$); second,  we choose a certain $s_0$ (which, up to translation, is assumed to be equal to 0) and we build $\mathcal M$ so as to connect $\Gamma_{0,c}$ to $\Gamma_{1,c}$.

To prove the existence of $\Gamma_{0,c}\,, \Gamma_{1,c}$, it is convenient to introduce the corresponding differential system as a first-order ODE as follows:
\begin{equation}\label{Eq:ODE2}
\frac{d}{ds}\begin{pmatrix}U\\U'\end{pmatrix}=\begin{pmatrix}U'\\ f(U)-cU'\end{pmatrix}.
\end{equation} This system has exactly three equilibria: 
\begin{equation}\label{Eq:Equilibria} \begin{pmatrix} 0\\ 0\end{pmatrix}\,, \begin{pmatrix}\eta\\0\end{pmatrix}\,, \begin{pmatrix} 1\\0\end{pmatrix}.\end{equation}
 From this point of view, the functions $\Gamma_{0,c}\,, \Gamma_{1,c}$ will be constructed as (un)stable manifolds associated to these equilibria. To be more precise, $\Gamma_{0,c}$ will correspond to the \emph{unstable manifold} associated with the equilibrium 
$(0,0)^{\top}$
while $\Gamma_{1,c}$ will correspond
to the \emph{stable manifold} associated with the equilibrium 
$(1,0)^{\top}$.

\paragraph{Equilibria and (un)stable manifolds of \eqref{Eq:ODE2}}
Before giving our main results about the (un)stability of the equilibria, we introduce the energy functional
\begin{equation}\label{Eq:Energy}E(u,p):=\frac{p^2}2+F(u)\text{ where }F:x\mapsto \int_0^x f.\end{equation} Setting, for any trajectory $(U(t),U'(t))^{\top}$ of \eqref{Eq:ODE2}, 
\[ e(t):=E(U(t),U'(t))\] we obtain 
\begin{equation}\label{Eq:EnergyDissipates} e'(t)=-c (U')^2(t)\leq 0.\end{equation} The last inequality comes from the condition $c>0$. This leads to considering the invariant region 
\begin{equation}\label{Eq:InvariantSet} X:=\{(u,p)\,, E(u,p)\leq 0\}, \end{equation} whose boundary in the phase plane is given by 
\[ \partial X=\{(u,p)\,,0\leq u\leq 1\,, F(u)\leq 0\,, p=\pm \sqrt{-2F(u)}\}.\]

The following lemma describes the stability of the equilibria given in \eqref{Eq:Equilibria}. 
 Each of this equilibria is of the form
 $(u^*,0)^{\top}$,
 so that the linearisation of \eqref{Eq:ODE2} at
 $(u^*,0)^{\top}$
 is the dynamical system
\begin{equation}
\frac{d}{ds}\begin{pmatrix}V\\cV'\end{pmatrix}=A_{u^*}\begin{pmatrix}V\\cV'\end{pmatrix}
\end{equation} where the matrix $A_{u^*}$ is defined as 
\[A_{u^*}=\begin{pmatrix}0&1\\ -f'(u^*)&-c\end{pmatrix}.
\] It is readily checked that, for $u^*=0$ or $1$, since $f'(0)\,, f'(1)<0$, the matrix $A_{u^*}$ has two real eigenvalues $(\lambda_{+,u^*}\,, \lambda_{-,u^*})$, associated with two eigenvectors $(v_{+,u^*},v_{-,u^*})$, whose expressions are given explicitly by 
\begin{equation}\label{Eq:Eigen}\lambda_{\pm,u^*}=\frac{-c\pm\sqrt{c^2-4f'(u^*)}}{2}\,, v_{\pm,u^*}=\begin{pmatrix}1\\ \lambda_{\pm,u^*}\end{pmatrix}.\end{equation}
 Since $\lambda_{+,u^*}>0, \lambda_{-,u^*}<0$ when $u^*=0,1$ we deduce that $(0,0)$ and $(1,0)$ are saddle points, and that there is one unstable manifold and one stable manifold associated with them in the following sense:

 \begin{lemma}\label{Le:Stab1} For any $c>0$, the following hold:
\begin{enumerate}
\item There exists a curve $S_0:=(\Gamma_{0,c},\Gamma_{0,c}')^{\top}$,
    dubbed the ``unstable manifold'',
    that satisfies \eqref{Eq:MB}.

Furthermore, it
is included in the invariant region: $S_0\subset X$
where $X$ is defined in \eqref{Eq:InvariantSet}. Finally, we have
\[ \lim_{s\to +\infty}\begin{pmatrix}\Gamma_{0,c}(s)\\\Gamma_{0,c}'(s)\end{pmatrix}=\begin{pmatrix}\eta\\0\end{pmatrix}.\]
\item  There exists a curve
    $S_1:=\begin{pmatrix}\Gamma_{1,c},\Gamma'_{1,c}\end{pmatrix}^{\top}$,
    dubbed the ``stable manifold'',
    that satisfies \eqref{Eq:ME}.
Furthermore,
it is included in $X^c$, and  it satisfies $\Gamma_{1,c}'>0$ in $\R$. Finally, there exists $s_\eta>0$ such that $\Gamma_{1,c}(s_\eta)=\eta$.

\end{enumerate}
\end{lemma}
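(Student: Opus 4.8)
The two statements are classical facts about the saddle-point structure of \eqref{Eq:ODE2} at the equilibria $(0,0)^\top$ and $(1,0)^\top$, and the plan is to combine the stable/unstable manifold theorem for the linearised system with the energy functional $E$ to control the global behaviour of the manifolds. For item (1), the local unstable manifold theorem applied at $(0,0)^\top$ (which is a saddle since $\lambda_{+,0}>0>\lambda_{-,0}$ by \eqref{Eq:Eigen}) yields a unique trajectory $S_0=(\Gamma_{0,c},\Gamma_{0,c}')^\top$ emanating from $(0,0)^\top$ as $s\to-\infty$ and tangent to the unstable eigenvector $v_{+,0}=(1,\lambda_{+,0})^\top$. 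Since $\lambda_{+,0}>0$, near $s=-\infty$ this branch satisfies $\Gamma_{0,c}>0$ and $\Gamma_{0,c}'>0$, so it enters the region $\{0<u<\eta\}$. I would then show it stays there and converges to $(\eta,0)^\top$: first, $E(0,0)=F(0)=0$, and by \eqref{Eq:EnergyDissipates} the energy $e(s)=E(\Gamma_{0,c}(s),\Gamma_{0,c}'(s))$ is nonincreasing, hence $e(s)\le 0$ for all $s$, so $S_0\subset X$. Inside $X$ on the branch $0\le u\le \eta$ one has $F(u)\le 0$ (since $f<0$ on $(0,\eta)$... wait, here I must be careful: $f<0$ on $(0,\eta)$, so $F$ is decreasing there, giving $F(u)<0$ for $u\in(0,\eta]$, consistent with $E\le 0$). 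The trajectory cannot cross $u=\eta$ with positive energy margin, and a standard argument (monotonicity of $u$ while $u'>0$, boundedness, and the fact that $(\eta,0)^\top$ is the only equilibrium in the closure of the relevant region) forces $\lim_{s\to+\infty}(\Gamma_{0,c},\Gamma_{0,c}')^\top=(\eta,0)^\top$. One also needs $\Gamma_{0,c}<1$, which follows because $\Gamma_{0,c}$ stays below $\eta<1$ throughout (it converges to $\eta$ monotonically from below), giving the bound $0<\Gamma_{0,c}<1$ required in \eqref{Eq:MB}.

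For item (2), I would apply the local stable manifold theorem at $(1,0)^\top$, again a saddle since $f'(1)<0$ forces $\lambda_{+,1}>0>\lambda_{-,1}$. This produces a unique trajectory $S_1=(\Gamma_{1,c},\Gamma_{1,c}')^\top$ converging to $(1,0)^\top$ as $s\to+\infty$, tangent to the stable eigenvector $v_{-,1}=(1,\lambda_{-,1})^\top$. Since $\lambda_{-,1}<0$, as $s$ decreases from $+\infty$ the branch has $\Gamma_{1,c}<1$ and $\Gamma_{1,c}'>0$, so it enters $\{0<u<1\}$ from below $u=1$. To get $\Gamma_{1,c}'>0$ on all of $\R$ and $0<\Gamma_{1,c}<1$, I would again use the energy: $E(1,0)=F(1)>0$ by the invasive hypothesis \eqref{Eq:invasive_BNL}, and since $e'(s)=-c(\Gamma_{1,c}')^2\le 0$, running $s$ backward the energy is nondecreasing, so $e(s)\ge F(1)>0$ for all $s$; hence $(\Gamma_{1,c},\Gamma_{1,c}')^\top$ never enters the closed region $X=\{E\le 0\}$, i.e. $S_1\subset X^c$. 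Now I claim $\Gamma_{1,c}'$ cannot vanish: if $\Gamma_{1,c}'(s_*)=0$ then $E(\Gamma_{1,c}(s_*),0)=F(\Gamma_{1,c}(s_*))\ge F(1)>0$; tracking where $F>0$ (which, because $f<0$ on $(0,\eta)$ and $f>0$ on $(\eta,1)$, means $\Gamma_{1,c}(s_*)$ lies in some interval $(u_\#,1]$ with $u_\#<\eta$), one checks via the sign of $f$ and the vector field that $\Gamma_{1,c}'$ stays strictly positive — essentially, the trajectory is trapped below $u=1$ moving monotonically, and any turning point would push it into $X$, contradiction. Finally, since $\Gamma_{1,c}$ is monotone increasing with $\Gamma_{1,c}(+\infty)=1$, and as $s\to-\infty$ it must either converge to an equilibrium with $u^*<1$ and energy $>F(1)$ (impossible: equilibria $(0,0)^\top,(\eta,0)^\top$ have energy $0,F(\eta)<0$, both $\le 0<F(1)$) or leave $[0,1]$ — ruling out the first shows it decreases through $\eta$, so there exists $s_\eta$ with $\Gamma_{1,c}(s_\eta)=\eta$, and the continuation argument gives $0<\Gamma_{1,c}<1$ on $\R$.

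\textbf{Main obstacle.} The delicate part is not the local manifold theorem (which is black-box) but the \emph{global} continuation: showing the unstable branch $S_0$ does not escape the strip $[0,1]\times\R$ or overshoot, and that it limits exactly on $(\eta,0)^\top$ rather than spiralling or diverging; and symmetrically that $S_1$ stays strictly monotone with $\Gamma_{1,c}'>0$ everywhere and reaches the level $\eta$. The energy identity \eqref{Eq:EnergyDissipates} is the key tool here — it confines $S_0$ to $X$ and $S_1$ to $X^c$ — but one still has to argue convergence to the specific equilibrium, which requires combining the monotonicity of $u$ (valid as long as $u'$ does not change sign, which the trapping region arguments guarantee) with a LaSalle-type or direct phase-plane argument near $(\eta,0)^\top$, noting that $(\eta,0)^\top$ is a stable spiral or node for the backward... in fact for the forward flow on $S_0$ it is the $\omega$-limit, and since $S_0$ lies on $\{E\le 0\}$ with $E$ strictly decreasing off the equilibria, the $\omega$-limit must be an equilibrium in $\overline{X}\cap\{0\le u\le\eta\}$, forcing it to be $(\eta,0)^\top$. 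I would also double-check the claimed strict inequality $S_1\subset X^c$ (as opposed to $\subseteq \overline{X^c}$), which follows from strict monotonicity of $e$ away from $\Gamma_{1,c}'=0$ together with $e(+\infty)=F(1)>0$.
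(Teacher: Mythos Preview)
Your overall strategy---local (un)stable manifold theorem combined with the energy functional $E$ to control global behaviour---is exactly the paper's approach. Your argument for $S_0\subset X$ (via $e(-\infty)=E(0,0)=0$ and $e'\le 0$) is in fact cleaner than the paper's, which instead compares the slope of $\partial X$ at the origin, namely $\Phi'(0)=\sqrt{-f'(0)}$, with the slope $\lambda_{+,0}$ of the unstable eigenvector and checks the latter is smaller.

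There is however one genuine error. You assert that $\Gamma_{0,c}$ ``stays below $\eta$ throughout'' and ``converges to $\eta$ monotonically from below''. This is false in general: when $0<c<2\sqrt{f'(\eta)}$ the equilibrium $(\eta,0)^\top$ is a spiral sink (this is Lemma~\ref{Le:Stab2}), and $\Gamma_{0,c}$ oscillates around $\eta$ infinitely many times as $s\to+\infty$; this oscillation is precisely what the paper exploits later to build the non-monotone and periodic waves of Proposition~\ref{Prop:exist_th_multiple_bumps}. The bound $0<\Gamma_{0,c}<1$ you need for \eqref{Eq:MB} should instead be read off from the confinement $S_0\subset X$ that you already established: since $F(1)>0$ by \eqref{Eq:invasive_BNL} and $F$ is continuous, the sublevel set $X=\{E\le0\}$ is contained in $\{0\le u\le u_*\}$ for the unique $u_*\in(\eta,1)$ with $F(u_*)=0$, giving $\Gamma_{0,c}\le u_*<1$. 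Your LaSalle-type argument for the $\omega$-limit is fine once you drop the monotonicity claim.

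For item~(2), your argument that $\Gamma_{1,c}'>0$ is on the right track but meanders; the paper's version is sharper: since $e(s)\ge F(1)$ for all $s$ and $F$ attains its maximum on $[0,1]$ at $u=1$, the equality $e(s_*)=F(\Gamma_{1,c}(s_*))$ that would follow from $\Gamma_{1,c}'(s_*)=0$ forces $\Gamma_{1,c}(s_*)=1$, hence $(\Gamma_{1,c}(s_*),\Gamma_{1,c}'(s_*))=(1,0)$, contradicting uniqueness for the ODE. For the existence of $s_\eta$, the paper gives a more direct argument than your backward-limit dichotomy: assuming $\Gamma_{1,c}>\eta$ throughout, the equation gives $\Gamma_{1,c}''=-c\Gamma_{1,c}'-f(\Gamma_{1,c})<0$, so $\Gamma_{1,c}$ would be concave, strictly increasing, and bounded below on $\R$, which is impossible.
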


\begin{proof}[Proof of Lemma~\ref{Le:Stab1}]
\begin{enumerate}
\item To check that $S_0\subset X$, it suffices to observe that  $S_0\cap\{u\leq \e\}\subset X$ for $\e>0$ small enough. However, we already observed that $\partial X=\{(u,p)\,, p=\pm \sqrt{-2F(u)}\}$. Consider the function $\Phi=u\mapsto \sqrt{-2F(u)}$ in the phase plane. Then, for any $u>0$ such that $F(u)<0$, $\Phi'(u)=-\frac{F'(u)}{\sqrt{-2F(u)}}\underset{u\to 0}\to \sqrt{-f'(0)}.$ From the explicit expression of $v_{-,0}$, it is sufficient to check that 
\[-c-\frac{\sqrt{c^2-4f'(0)}}2\leq \sqrt{-f'(0)},\]
which is obviously true for any $c>0$.
To check that the trajectory converges to
$(\eta,0)^{\top}$,
simply observes that $E$ is a strict Lyapunov functional for \eqref{Eq:ODE2}, and that it has a unique global (strict) minimum at
$(\eta,0)^{\top}$.
The proof is concluded.
\item As $E(1,0)=F(1)=\int_0^1 f>0$, and from the monotonousity of the energy along trajectories~\eqref{Eq:EnergyDissipates}, we know that $E(\Gamma_{1,c},\Gamma_{1,c}')>0$. We also note that the same property, combined with the fact that $F$ reaches its maximum at $1$, implies that $\Gamma_{1,c}'(s)>0$ for any $s\in \R$.
Furthermore, there exists $s_\eta$ such that $\Gamma_{1,c}(s_\eta)=\eta$. Indeed, if this were not the case, we would have 
\[\Gamma_{1,c}>\eta\text{ in }\R.\] Since, as observed, $\Gamma_{1,c}'>0$ in $\R$, this would imply that 
\[ \Gamma_{1,c}''=-c\Gamma_{1,c}'-f(\Gamma_{1,c})<0.\] Thus, $\Gamma_{1,c}$ would be concave, strictly increasing at $+\infty$ and bounded from below by $\eta$ at $-\infty$. This is absurd, and we thus conclude that such an $s_\eta$ exists.
\end{enumerate}
\end{proof}

 The dynamical behaviour of the third equilibria
$(\eta,0)^{\top}$.
 is more involved and is described in the next lemma (which follows from the study of \eqref{Eq:Eigen}):
\begin{lemma}\label{Le:Stab2}
\begin{enumerate}
\item If $0<c<2\sqrt{f'(\eta)},$ the equilibrium
    $(\eta,0)^{\top}$
    is a spiral sink: the linearised system has two conjugate complex eigenvalues with negative real parts.
\item If $2\sqrt{f'(\eta)}<c$,
    the equilibrium
    $(\eta,0)^{\top}$
    is locally stable: the linearised system has two negative real eigenvalues.
\end{enumerate}
\end{lemma}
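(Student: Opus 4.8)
The plan is to read off the classification directly from the linearisation formula \eqref{Eq:Eigen} specialised to $u^*=\eta$, using the sign condition $f'(\eta)>0$ from Definition~\ref{DE:BNL}. The eigenvalues of $A_\eta$ are $\lambda_{\pm,\eta}=\tfrac12\bigl(-c\pm\sqrt{c^2-4f'(\eta)}\bigr)$, and everything is governed by the sign of the discriminant $\Delta(c):=c^2-4f'(\eta)$, which is meaningful precisely because $f'(\eta)>0$ (in contrast with the equilibria $0$ and $1$, where $f'<0$ makes the discriminant automatically positive).

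First I would treat the oscillatory case $0<c<2\sqrt{f'(\eta)}$. Here $\Delta(c)<0$, so $\lambda_{\pm,\eta}=-\tfrac c2\pm\tfrac i2\sqrt{4f'(\eta)-c^2}$ are genuinely complex conjugate with common real part $-c/2<0$; this is by definition a stable focus, i.e.\ a spiral sink. Since the real parts are nonzero the equilibrium is hyperbolic, so the Hartman--Grobman theorem guarantees that the nonlinear flow of \eqref{Eq:ODE2} near $(\eta,0)^\top$ is topologically conjugate to its linearisation, which legitimises the ``spiral sink'' wording.

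Next I would treat $c>2\sqrt{f'(\eta)}$. Then $\Delta(c)>0$, so both eigenvalues are real. Because $f'(\eta)>0$ we have $0\le\sqrt{\Delta(c)}=\sqrt{c^2-4f'(\eta)}<c$, hence $\lambda_{-,\eta}<\lambda_{+,\eta}<0$: the linearised system has two distinct negative real eigenvalues and $(\eta,0)^\top$ is a stable node, again hyperbolic so locally as in the linear picture.

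There is no genuine obstacle in this lemma; it is a short computation. The only point deserving a moment's care is that the hypothesis $f'(\eta)>0$ does double duty — it is what can make $\Delta(c)$ negative for small $c$ (producing the spiral) and simultaneously what forces $\lambda_{+,\eta}<0$ in the real regime (so the node is stable rather than a saddle). The borderline value $c=2\sqrt{f'(\eta)}$, which gives a degenerate node, is deliberately excluded from the statement and need not be analysed.
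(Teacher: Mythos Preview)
Your argument is correct and is exactly what the paper intends: the lemma is stated with the parenthetical remark that it ``follows from the study of \eqref{Eq:Eigen}'', and your computation of the discriminant $c^2-4f'(\eta)$ together with the sign condition $f'(\eta)>0$ is precisely that study. There is nothing to add.
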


\subsection{Proof of Proposition~\ref{Prop:exist_th_TW}: construction of a monotonous reversed travelling wave}
We now move on to the proof of Proposition~\ref{Prop:exist_th_TW}.
We introduce the following definition:
\begin{definition}\label{De:ReversedTW}
For any $c>0$, we say that $(\Th,\M)$ is a monotonous reversed travelling wave with speed $c$ if $\M\in L^1(\R)\,, \M\geq 0$, and $(\Th,\M)$  solves
\begin{equation}\begin{cases}
\frac{d}{ds}\begin{pmatrix}
\Th\\\Th'
\end{pmatrix}=\begin{pmatrix}\Th'\\ -f(\Th)-\M\Th+c\Th'\end{pmatrix}\,,
\\ \begin{pmatrix}
\Th(-\infty)\\\Th'(-\infty)
\end{pmatrix}=\begin{pmatrix}
0\\0
\end{pmatrix}\,, \begin{pmatrix}
\Th(+\infty)\\\Th'(+\infty)
\end{pmatrix}=\begin{pmatrix}
1\\0
\end{pmatrix}
\end{cases}
\end{equation} with $\Th'>0$ in $\R$.
\end{definition}
\paragraph{Preliminary discussion}
There are several ways to construct such reversed travelling waves, all of them following a simple strategy: follow the unstable manifold $S_0$ associated with the equilibrium $(0,0)$, and then build a compactly supported $\mathcal M$ taking you from $S_0$ to the stable manifold $S_1$ associated with $(1,0)$. Once $S_1$ is reached, stop acting and simply follow $(\Gamma_{1,c},\Gamma_{1,c}')$.

However, as pointed out in Lemma~\ref{Le:MonotComp}, if we want a reversed travelling wave that is a potential candidate to be a solution of the MFG problem \eqref{Eq:MFG}, we can not choose just any reversed travelling wave. We must indeed ensure that $\M$ has a compact support and that
\[\cV'=L'(c)\] in the support of $m$, which in turns implies that $\Th'$ must be constant in the support of $\M$. More precisely, we must have the relationship
\[ L'(c)=\lambda \Th'\text{ in }\mathrm{supp}(\M).\]
This imposes a first constraint on the range of $(c,\lambda)$.

\paragraph{Construction of $c_0$}
We now define the map $c_0$ used in Proposition~\ref{Prop:exist_th_TW}. For any fixed $c>0$, we define the quantities
\begin{equation}\label{Eq:Gamma0Max}
\Gamma_{0,\max}(c):=\Vert \Gamma_{0,c}\Vert_{L^\infty(\R)}\end{equation} and 
\begin{equation}\label{Eq:Gamma_0PrimeMax}
\Gamma'_{0,\max}(c):=\max_{\R} \Gamma_{0,c}'.\end{equation}
In fact we only need $\Gamma'_{0,\max}$ to define $c_0$; however, as $\Gamma_{0,\max}$ is used in a later construction, it is convenient, for further reference, to define it here.

The following lemma is key:
\begin{lemma}\label{Le:Galere1}
The map $\Gamma'_{0,\max}$ is decreasing on $(0;+\infty)$. Furthermore, 
\[ \lim_{c \to 0^+}\Gamma_{0,\max}'(c)>0\,, \lim_{c \to \infty}\Gamma_{0,\max}'(c)=0.\]
\end{lemma}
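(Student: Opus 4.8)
The plan is to prove the monotonicity of $c\mapsto \Gamma'_{0,\max}(c)$ by a comparison/shooting argument in the phase plane, and to obtain the two limits by matching the trajectory $\Gamma_{0,c}$ against explicit barrier trajectories as $c\to 0^+$ and $c\to\infty$. Throughout I would work with the orbit $S_0=(\Gamma_{0,c},\Gamma_{0,c}')^\top$ in the $(u,p)$-plane, which by Lemma~\ref{Le:Stab1} is the branch of the unstable manifold of $(0,0)$ lying in $u>0$ and emanating into $X$, and which converges to $(\eta,0)$. Since $\Gamma'_{0,\max}(c)=\max_s \Gamma_{0,c}'(s)$ is exactly the top height of this orbit in the phase plane, I want to show this top height is strictly decreasing in $c$.

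First, for the monotonicity, I would represent $S_0$ as a graph $p=\Psi_c(u)$ over $u\in(0,\eta)$: near $u=0$ the orbit leaves along the eigenvector $v_{+,0}$ with slope $\lambda_{+,0}=\frac{-c+\sqrt{c^2-4f'(0)}}{2}>0$, and $\Psi_c$ solves the first-order ODE obtained from \eqref{Eq:ODE2}, namely $p\,\dfrac{dp}{du}=f(u)-cp$, i.e. $\dfrac{dp}{du}=\dfrac{f(u)}{p}-c$, with the correct asymptotics $\Psi_c(u)\sim\lambda_{+,0}u$ as $u\to0^+$. Now fix $0<c_1<c_2$. Near $u=0$ we have $\lambda_{+,0}(c_1)>\lambda_{+,0}(c_2)$, so $\Psi_{c_1}>\Psi_{c_2}$ for small $u$; and wherever the two graphs are both positive and $\Psi_{c_1}(u)=\Psi_{c_2}(u)=p>0$, the right-hand side $\frac{f(u)}{p}-c$ is strictly larger for $c_1$, so $\Psi_{c_1}$ cannot cross $\Psi_{c_2}$ from above — a standard ODE-comparison (Gr\"onwall-type) argument then gives $\Psi_{c_1}(u)>\Psi_{c_2}(u)$ for all $u$ in the common interval of definition. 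In particular $\max_u\Psi_{c_1}(u)\ge \max_u\Psi_{c_2}(u)$, and strictness follows because the inequality between the ODE right-hand sides is strict; this yields that $\Gamma'_{0,\max}$ is (strictly) decreasing. Here assumption~\eqref{Hyp:f2} ($f$ has a unique critical point in $[0;\eta]$) is what guarantees that $\Psi_c$ has a single maximum, attained where $f(u)=c\,\Psi_c(u)$, which keeps the comparison clean.

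For the limits, as $c\to\infty$: from $p\,\frac{dp}{du}=f(u)-cp\le \|f\|_{L^\infty[0,\eta]}-cp$, so along the orbit $\frac{d}{ds}\big(\tfrac{p^2}{2}\big)=p(f-cp)\le p\|f\|_\infty-cp^2$, and at the maximum of $p$ we have $p\,p'=0$ hence $c\,p_{\max}^2\le p_{\max}\|f\|_\infty$, i.e. $\Gamma'_{0,\max}(c)\le \|f\|_{L^\infty[0,1]}/c\to0$. For $c\to0^+$: monotonicity already gives that $\Gamma'_{0,\max}(c)$ increases to a (possibly infinite) limit $\ell\in(0,+\infty]$ as $c\to0^+$; to see $\ell>0$ it suffices to note $\Gamma'_{0,\max}(c)\ge \lambda_{+,0}(c)\cdot(\text{something})$ is not quite enough, so instead I would compare with the conservative case $c=0$: there the orbit through $(0,0)$ satisfies $\tfrac{p^2}{2}=F(u)\ge -F^-$, and on $(0,\eta)$ one has $F(u)=\int_0^u f>0$ for $u$ small (since $f>0$ on $(\eta,1)$ does not help here — rather $f<0$ on $(0,\eta)$, so $F<0$ there); hence the true picture is that for $c=0$ the unstable manifold of $(0,0)$ is the level set $E=0$, and $\max p$ on its branch into $u>0$ is $0$. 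So the correct statement is that $\Gamma'_{0,\max}(c)>0$ for every $c>0$ (immediate, since the orbit leaves $(0,0)$ with positive slope and reaches $(\eta,0)$, so it has a strictly positive maximal height), and $\lim_{c\to0^+}\Gamma'_{0,\max}(c)=\sup_{c>0}\Gamma'_{0,\max}(c)>0$ by monotonicity; one then pins this limit down to be strictly positive (indeed $\ge\sqrt{-f'(0)}$ or a similar explicit bound) by comparing, for small $c$, with the vertical-isocline analysis: on $(0,\eta)$ the nullcline $p=f(u)/c$ lies far below any fixed level once $c$ is small, so the orbit stays above a $c$-independent barrier built from the $c=0$ separatrix emanating from $(\eta,0)$ run backward.

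The main obstacle I anticipate is the $c\to0^+$ limit, specifically making rigorous that $\liminf_{c\to0^+}\Gamma'_{0,\max}(c)>0$ rather than merely $\ge 0$: the naive $c=0$ limit is degenerate (the energy-conservative orbit through $(0,0)$ has $F\le0$ on $(0,\eta)$, so one must argue that the orbit $\Gamma_{0,c}$ is pushed up, not down, by the extra term $-cp$, relative to the heteroclinic-type structure near $(\eta,0)$). I would handle this by comparing $\Gamma_{0,c}$ on a fixed sub-interval $[u_0,\eta]$ (with $u_0$ small and fixed) with the $c=0$ orbit entering $(\eta,0)$ — which has a definite positive height over $[u_0,\eta]$ — and using that for $c$ small the flow is a small perturbation on this compact set, together with the already-established monotonicity to upgrade ``liminf positive'' to ``limit exists and is positive.'' The monotonicity argument and the $c\to\infty$ estimate are, by contrast, routine and I would not dwell on them.
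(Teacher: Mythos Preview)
Your monotonicity argument is essentially the paper's: both represent the unstable-manifold orbit as a graph $p=\Psi_c(u)$ over $(0,\eta)$, compare the initial slopes $\lambda_{+,0}(c)$ at $u=0$, and then run an ODE comparison (the paper phrases it as a proof by contradiction on the first crossing point, you as a Gr\"onwall/no-crossing argument). Two small corrections: the phase-plane ODE is $p\,\dfrac{dp}{du}=-f(u)-cp$ (you dropped a sign, harmless here since $f<0$ on $(0,\eta)$), and assumption~\eqref{Hyp:f2} is not used in this lemma --- the paper shows directly (your argument implicitly needs this too) that the maximum of $\Gamma_{0,c}'$ is attained on the first monotone branch where $\Gamma_{0,c}<\eta$, so that the graph representation is legitimate.

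For $c\to\infty$ your pointwise argument (``at the max of $p$ one has $p'=0$, hence $c\,p_{\max}=-f(u^*)\le\|f\|_\infty$'') is correct and slightly slicker than the paper's, which integrates $-(e^{cs}\Gamma_{0,c}')'=e^{cs}f(\Gamma_{0,c})$ from $-\infty$ to the maximum point to get the same bound $\Gamma'_{0,\max}(c)\le\|f\|_{L^\infty}/c$.

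Where you go astray is the $c\to0^+$ limit: you correctly write the one-line argument --- $\Gamma'_{0,\max}$ is decreasing and strictly positive for each $c>0$, so $\lim_{c\to0^+}\Gamma'_{0,\max}(c)=\sup_{c>0}\Gamma'_{0,\max}(c)\ge\Gamma'_{0,\max}(1)>0$ --- and then immediately bury it under a page of unnecessary barrier constructions, calling it ``the main obstacle.'' It is not an obstacle at all; the paper dispatches it in exactly that one sentence. (Incidentally your aside that for $c=0$ the orbit through $(0,0)$ has $\max p=0$ is wrong: the level set $E=0$ gives $p=\sqrt{-2F(u)}$, and $F(\eta)<0$, so $\max p=\sqrt{-2F(\eta)}>0$. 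But none of this is needed.)
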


\begin{proof}[Proof of Lemma~\ref{Le:Galere1}]
We begin with the strict monotonicity of $\Gamma_{0,\max}'$. Let $c_1\,, c_2>0$ with $c_1<c_2$. For $i=1,2$, we define 
\[  s_i:=\sup\left\{s\in \R:\, \forall s'\leq s\,, \Gamma_{0,c_i}(s')\leq \eta \right\}.\] Observe that, while $s_i>-\infty$, we might have $s_i=\infty$ (this is the case if, for instance, $\Gamma_{0,c_i}$ is monotonous). At any rate, first observe that 
\begin{equation}\label{Eq:Si1} \Gamma_{0,c_i}'>0\text{ in }(-\infty,s_i).\end{equation}
To establish \eqref{Eq:Si1}  observe that the weaker condition 
\begin{equation}\label{Eq:Si2} \Gamma_{0,c_i}'\geq 0\text{ in }(-\infty,s_i).\end{equation}
holds. Indeed, should \eqref{Eq:Si2} not hold, pick $s^*<s_1$ such that 
\[ \Gamma_{0,c_i}'(s^*)=\min_{s \leq s_i} \Gamma_{0,c_i}'(s)<0.\] From \eqref{Eq:MB} we deduce that 
\[ 0<-c \Gamma_{0,c_i}'(s^*)=f\left(\Gamma_{0,c_i}(s^*)\right).\] Consequently, $\Gamma_{0,c_i}(s^*)> \eta$, in contradiction with the definition of $s_i$. Thus, \eqref{Eq:Si2} is valid. To prove \eqref{Eq:Si1}, argue once again by contradiction and assume there exists $s^*<s_i$ such that $\Gamma_{0,c_i}'(s^*)=0$.
System \eqref{Eq:MB} implies that either $\Gamma_{0,c_i}(s^*)=0$ or $\Gamma_{0,c_i}(s^*)=\eta$; the first possibility is ruled out as $\Gamma_{0,c_i}>0$ in $(-\infty;+\infty)$, while the second contradicts the definition of $s_i$.

The second observation is that 
\begin{equation}\label{Eq:Si3}\max_\R\Gamma_{0,c_i}'=\max_{(-\infty;s_i)}\Gamma_{0,c_i}'.\end{equation} Indeed, define \[ s_{i,\mathrm{max}}:=\mathrm{arg min}\left\{ s\in \R: \Gamma_{0,c_i}'(s_{\mathrm{max}})=\max_\R \Gamma_{0,c_i}'.\right\}\] By \eqref{Eq:MB}  we have 
\[0>-c_i\Gamma_{0,c_i}'(s_{i,\mathrm{max}})=f\left(\Gamma_{0,c_i}(s_{i,\mathrm{max}})\right).\] Thus, we obtain $\Gamma_{0,c_i}(s_{i,\mathrm{max}})<\eta$.
We claim that 
\begin{equation}\label{Eq:Si4}\Gamma_{0,c_i}'\geq 0\text{ in }(-\infty;s_{i,\mathrm{max}}].\end{equation}  Indeed, if this is not the case, we let $s^*<s_{i,\mathrm{max}}$ such that $\Gamma_{0,c_i}'(s^*)=\min_{(-\infty;s_{i,\mathrm{max}})} \Gamma_{0,c_i}'<0$. Equation~\eqref{Eq:MB} in turn implies that 
\[\Gamma_{0,c_i}>\eta.\] Since $\Gamma_{0,c_i}(s_{i,\mathrm{max}})<\eta$, there exists $s^{**}<s^*$ such that $\Gamma_{0,c_i}(s^{**})=\Gamma_{0,c_i}(s_{i,\mathrm{max}})$ and $\Gamma_{0,c_i}'(s^{**})>0$. However, in view of \eqref{Eq:Energy}, define 
\[ e_i:s\mapsto \frac{\Gamma_{0,c_i}'(s)^2}2+F(\Gamma_{0,c_i}(s)).\] It is obvious that 
\[ e_i'(s)=-c\left(\Gamma_{0,c_i}'(s)\right)^2,\] whence 
\[ 0>e_i(s_{i,\mathrm{max}})-e_i(s^{**})=\frac{(\Gamma_{0,c_i}'(s_{i,\mathrm{max}}))^2-(\Gamma_{0,c_i}'(s^{**}))^2}2,\] in contradiction with the definition of $s_{\mathrm{max}}$.

From \eqref{Eq:Si1}, $\Gamma_{0,c_i}$ is a diffeomorphism from $(-\infty;s_i)$ onto $(0;\eta)$. We now define $\tilde\gamma_i'$ as 
\begin{equation}\label{Eq:DefGam}
\tilde\gamma_i':(0;\eta)\ni \xi \mapsto \Gamma_{0,c_i}'\left(\Gamma_{0,c_i}^{-1}(\xi)\right).
\end{equation}
Given \eqref{Eq:Si3}, it suffices to prove that \begin{equation}\label{Eq:Ke}\tilde\gamma_2'<\tilde\gamma_1'\end{equation} to conclude that $\Gamma_{0,\max}'$ is decreasing. By \eqref{Eq:Eigen}, 
\[\tilde\gamma_i'(0)=\frac{-c_i+\sqrt{c_i^2-4f'(0)}}2.\] However, it is readily checked that $c\mapsto\frac{-c+\sqrt{c^2-4f'(0)}}2$ is decreasing in $c$. Thus
\begin{equation}\label{Eq:Si4}
\tilde\gamma_2'(0)<\tilde\gamma_1'(0).\end{equation} Now, to prove \eqref{Eq:Ke}, argue by contradiction. Using \eqref{Eq:Si4}, this implies the existence of $\xi^*\in (0;\eta)$ such that 
\[ \tilde\gamma_1'(\xi^*)=\tilde\gamma_2'(\xi^*)\text{ with }\tilde\gamma_2'<\tilde\gamma_1'\text{ in }[0;\xi^*),\] which in turn leads to the existence of $\xi^{**}\in (0;\xi^*)$ such that 
\[ \frac{d \tilde\gamma_2'}{d\xi}(\xi^{**})>\frac{d\tilde\gamma_1'}{d\xi}(\xi^{**})\,, \tilde \gamma_2'(\xi^{**})<\tilde\gamma_1'(\xi^{**}).\] Going back to the definition of $\tilde\gamma_i$ (Eq. \eqref{Eq:DefGam}) and using \eqref{Eq:MB} this yields the comparison
\[\frac{-c_2\tilde\gamma_2'(\xi^{**})-f(\xi^{**})}{\tilde\gamma_2'(\xi^{**})}=\frac{d \tilde\gamma_2'}{d\xi}(\xi^{**})>\frac{d\tilde\gamma_1'}{d\xi}(\xi^{**})=\frac{-c_1\tilde\gamma_1'(\xi^{**})-f(\xi^{**})}{\tilde\gamma_1'(\xi^{**})}
\] or, equivalently, 
\[ -c_2 \tilde\gamma_1'(\xi^{**})>-c_1\tilde \gamma_2'(\xi^{**}),\] a contradiction. This concludes the proof of the monotonicity of $\Gamma_{0,\max}'$.

We now study the asymptotic behaviour of $\Gamma_{0,\max}'$ and, more precisely, we prove 
\begin{equation}\label{Eq:Si5}
\lim_{c \to \infty}\Gamma_{0,\max}'(c)=0\end{equation} as the monotonicity in turn implies $\lim_{c \to 0^+}\Gamma_{0,\max}'(c)>0$. Establishing \eqref{Eq:Si5} follows from a simple analysis of \eqref{Eq:MB}: up to a translation assume that, for any $c>0$, we have 
\[ \Gamma_{0,c}'(0)=\Gamma_{0,\max}'(c).\] As $\Gamma_{0,c}$ satisfies 
\[ -\frac{d}{ds}\left(e^{cs}\Gamma_{0,c}'\right)=e^{cs}f\left(\Gamma_{0,c}(s)\right)\] and as $0\leq \Gamma_{0,c}\leq 1$, an integration between $-\infty$ and 0 provides the upper bound
\[ \Gamma_{0,c}'(0)\leq \frac{\Vert f\Vert_{L^\infty(0;1)}}c.\] The conclusion follows.

\end{proof}

Given Assumption \eqref{Hyp:L}, $L'$ is a non-decreasing function, whereby Lemma~\ref{Le:Galere1} implies that the map $c\mapsto \frac{\Gamma'_{0,\max}(c)}{L'(c)}$ is continuous decreasing. Consequently, it has a left-inverse, which we call $c_0$:
\begin{equation}\label{Eq:c0}c_0:(0;\infty)\ni \lambda \mapsto\left( \frac{\Gamma'_{0,\max}}{L'}\right)^{-1}(\lambda)=\sup\left\{ c>0\,, \Gamma_{0,\max}'(c)\geq \lambda L'(c)\right\}.\end{equation}

\subsection{Construction of the monotonous reversed travelling wave (Proposition~\ref{Prop:exist_th_TW})}
We are now in a position to prove Proposition~\ref{Prop:exist_th_TW}.
\begin{proof}[Proof of Proposition~\ref{Prop:exist_th_TW}]
Recall that we must have $\M$ compactly supported and that $\Th$ must satisfy 
\[ \Th'=\lambda L'(c)\text{ in }\mathrm{supp}(\M).\] Fix $\lambda\,, c>0$ such that $0<c<c_0(\lambda)$ where $c_0$ is defined in \eqref{Eq:c0}. Let $s^*$ be such that $\Gamma_{0,c}'(s^*)=\Gamma'_{0,\max}(c)$. We define $s_0$ as 
\begin{equation}\label{Def:S0} s_0:=\min\left\{s\geq s^*\,, \Gamma_{0,c}'(s_0)=\lambda L'(c)\right\}.\end{equation}  
We first define $\Th$ on $(-\infty;s_0)$ as 
\[ \Th:(-\infty;s_0)\ni s\mapsto \Gamma_{0,c}(s).\] 
We now build $\M$ as a density supported in $[s_0:s_1]$ where $s_1$ has to be determined.
Since we know that $\Th'$ must be constant on $\mathrm{supp}(\M)$ it is natural to construct $(\M,\Th)$, on the support of $\M$, as 
\begin{equation}\label{Eq:Razu}\Th:[s_0;s_1]=\supp(\M)\ni s\mapsto \Gamma_{0,c}(s_0)+s\Gamma_{0,c}'(s_0)\text{ and }\M:=\frac{f(\Th)+c\Th'}\Th.\end{equation} Two requirements need to be checked: the non-negativity of $\M$ and its integrability. Regarding the non-negativity, we use \eqref{Hyp:f2}: let $\underline \eta\in (0;\eta)$ be the unique local minimum of $f$ in $(0;\eta)$.  As $-c  \Gamma_{0,c}'(s^*)=f( \Gamma_{0,c}(s^*))$, we deduce  \begin{equation}\label{Eq:Tau0} \Gamma_{0,c}(s^*)\geq
    \underline \eta.\end{equation} Since $f$ has a unique critical point in $(0;\eta)^{\top}$ (and in particular no critical point in $(\underline \eta;\eta)$) we deduce that for any $s\geq s_0$ such that $\Gamma_{0,c}(s_0)+s\Gamma_{0,c}'(s_0)\leq 1$, there holds 
\[ c\Gamma_{0,c}'(s_0)\geq -f\left(\Gamma_{0,c}(s_0)+s\Gamma_{0,c}'(s_0)\right).
\] Thus, the function $\M$ defined in \eqref{Eq:Razu} satisfies 
\[ \M\geq 0.\] We let $\gamma_1=\Gamma_{1,c}(r_1)$ be the only point such that $(\gamma_1,\Gamma_{0,c}'(s_0))\in S_1=\{(\Gamma_{1,c}(s)\,, \Gamma_{1,c}'(s))\,, s \in \R\}$ for some $r_1$. We define $s_1>s_0$ such that $\Gamma_{0,c}(s_0)+s_1\Gamma_{0,c}'(s_0)=\gamma_1$ and we define, for $s\geq s_1$, 
\[ \Th(s)=\Gamma_1(r_1-s_1+s)\,, \M=0.\]
By construction, $\M$ is compactly supported.
It is clear (since $\Th'\equiv \Gamma_{0,c}'(s_0)$ in $\supp(\M)$) that $\M\in L^1(\R)$. Finally, by construction, $\Th$ satisfies 
\[ \begin{cases}-\Th''-c\Th'=f(\Th)-\M\Th\,,\\ \Th(-\infty)=\Th'(-\infty)=\Th'(+\infty)=0\,,\\ \Th(+\infty)=1,\end{cases}\] and 
\[ \Th'\equiv \lambda L'(c)\text{ in }\mathrm{supp}(\M).\]
\end{proof}

\subsection{Construction of non-monotonous reversed travelling waves (Proposition~\ref{Prop:exist_th_multiple_bumps})}
\label{subsec:prop_multiple_bumps}

\begin{proof}[Proof of Proposition~\ref{Prop:exist_th_multiple_bumps}.]
    The  proof is an adaptation of the proof of Proposition~\ref{Prop:exist_th_TW}. Namely, we consider speed $c$ satisfying $c<2\sqrt{f'(\eta)}$ so that the equilibrium $(\eta,0)^{\top}$ of \eqref{Eq:ODE} is a spiral sink (see Lemma~\ref{Le:Stab2}). In particular, there exists a sequence $(s_{k})_{k\in \N}$ such that $\Gamma_{0,c}'$ has a positive local maximum at $s=s_k$, and has exactly one local minimum (which is negative) between $s_k$ and $s_{k+1}$. To build a reversed travelling wave with exactly $k$ bumps, it suffices to replace, in the proof of Proposition~\ref{Prop:exist_th_TW}, $s^*$ with $s_k$, and the definition of $c_0$ with 
    \[ c_k:(0;+\infty)\ni \lambda\mapsto \sup\left\{c>0\,, \Gamma_{0,c}'(s_k)\geq \lambda L'(c)\right\}.\] 
    The rest of the proof is identical.
    \end{proof}

The following lemma will also be important in the course of the proof of our main results; it is a simple consequence of the construction of $(\Th,\M)$.
\begin{lemma}
    \label{Lem:Th''}
    Let $c\,, \lambda>0$ be two positive real numbers, $k\in \N$ with $c<c_k(\lambda)$, let $(\Th\,, \M)$ be the reversed travelling wave provided by  Proposition~\ref{Prop:exist_th_multiple_bumps}, and recall that up to a translation $\inf(\supp(\M))=0$. Then
$\Th''(0^-)<0$.\end{lemma}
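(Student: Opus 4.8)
The statement concerns the left-hand second derivative of $\Th$ at the point $0 = \inf(\supp(\M))$, i.e., at the moment the harvesting density first "switches on". Recall from the construction (Eq.~\eqref{Def:S0} in the monotonous case, and its analog with $s_k$ in the multiple-bump case) that $0$ is chosen as the first $s \geq s^*$ (resp. $s \geq s_k$) at which $\Gamma_{0,c}'(s) = \lambda L'(c)$, where $s^*$ (resp. $s_k$) is a point where $\Gamma_{0,c}'$ attains its (relevant local) maximum. On the left of $0$, we have $\Th \equiv \Gamma_{0,c}$, so $\Th''(0^-) = \Gamma_{0,c}''(0^-) = \Gamma_{0,c}''(0)$ by the smoothness of $\Gamma_{0,c}$.

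\textbf{Key steps.} First I would reduce the claim to showing $\Gamma_{0,c}''(0) < 0$. Since $0$ is, by its very definition, the first point at or after the maximum $s^*$ of $\Gamma_{0,c}'$ where the value $\lambda L'(c) < \Gamma'_{0,\max}(c)$ is reached coming down from the max, the natural statement is that $\Gamma_{0,c}'$ is strictly decreasing at $0$, which gives $\Gamma_{0,c}''(0) \leq 0$; one then upgrades the inequality to strict. The cleanest route: observe that from the ODE \eqref{Eq:MB}, $\Gamma_{0,c}''(s) = -c\Gamma_{0,c}'(s) - f(\Gamma_{0,c}(s))$, so $\Gamma_{0,c}''(0) = -c\,\lambda L'(c) - f(\Gamma_{0,c}(0))$. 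At the maximum point $s^*$ of $\Gamma_{0,c}'$ we have $\Gamma_{0,c}''(s^*) = 0$, hence $-c\,\Gamma'_{0,\max}(c) = f(\Gamma_{0,c}(s^*))$, so $f(\Gamma_{0,c}(s^*)) < 0$ and thus $\Gamma_{0,c}(s^*) \in (\underline\eta, \eta)$ (using the unique-critical-point assumption \eqref{Hyp:f2} and the sign of $f$, exactly as in \eqref{Eq:Tau0}). Since on $(s^*, 0]$ we have $\Gamma_{0,c}' > 0$ (established in the proof of Lemma~\ref{Le:Galere1}, see \eqref{Eq:Si1}), $\Gamma_{0,c}$ is strictly increasing there, so $\Gamma_{0,c}(0) > \Gamma_{0,c}(s^*) \geq \underline\eta$. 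Moreover $\Gamma_{0,c}(0) < \eta$: indeed $0$ lies strictly before the trajectory reaches $\eta$ (it is still on the part of $S_0$ where $\Gamma_{0,c} < \eta$, since $s^*$ has $\Gamma_{0,c}(s^*) < \eta$ and the spiralling toward $(\eta,0)$ happens later; more simply, if $\Gamma_{0,c}(0) \geq \eta$ then by \eqref{Eq:MB} $\Gamma_{0,c}'' = -c\Gamma_{0,c}' - f(\Gamma_{0,c}) \leq -c\lambda L'(c) < 0$ is strictly negative there, which is consistent but we still need the value strictly below $\eta$ — this needs a short argument). Granting $\Gamma_{0,c}(0) \in (\underline\eta, \eta)$, we get $f(\Gamma_{0,c}(0)) > 0$, hence
\[
\Gamma_{0,c}''(0) = -c\,\lambda L'(c) - f(\Gamma_{0,c}(0)) < 0,
\]
since both terms are strictly negative ($\lambda L'(c) > 0$ because $L'$ is increasing with $L'(0) = 0$ and $c > 0$, so $\lambda L'(c) > 0$). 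This is precisely $\Th''(0^-) < 0$.

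\textbf{Main obstacle.} The only genuinely delicate point is confirming that $\Gamma_{0,c}(0) < \eta$, i.e., that the switching point $0$ defined in \eqref{Def:S0} is reached while the trajectory $\Gamma_{0,c}$ is still below $\eta$ — equivalently, that the first descent of $\Gamma_{0,c}'$ from $\Gamma'_{0,\max}(c)$ down to $\lambda L'(c)$ occurs before $\Gamma_{0,c}$ crosses $\eta$. For $c < 2\sqrt{f'(\eta)}$ the trajectory spirals into $(\eta,0)$, and one must check the monotone portion of the construction stays in $\{\Gamma_{0,c} < \eta\}$; this is implicit in the construction of $(\Th,\M)$ in Proposition~\ref{Prop:exist_th_TW} (the density $\M$ in \eqref{Eq:Razu} is built precisely so $\Th$ increases linearly from $\Gamma_{0,c}(s_0)$ with $\Gamma_{0,c}(s_0) < \eta$), so I would extract it from there rather than re-prove it. Once that is in hand, the rest is the two-line computation above. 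I expect the whole proof to be three or four lines once the bookkeeping on $\Gamma_{0,c}(0) \in (\underline\eta,\eta)$ is cited from the construction.
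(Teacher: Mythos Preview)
Your identification $\Th''(0^-)=\Gamma_{0,c}''(0)=-c\lambda L'(c)-f(\Gamma_{0,c}(0))$ is correct, but the concluding step contains a sign error that breaks the argument. For a bistable nonlinearity with roots $0,\eta,1$ one has $f<0$ on the \emph{entire} interval $(0,\eta)$; the point $\underline\eta$ is the local minimum of $f$ there, not a zero. Hence if $\Gamma_{0,c}(0)\in(\underline\eta,\eta)$ then $f(\Gamma_{0,c}(0))<0$, so $-f(\Gamma_{0,c}(0))>0$, and the two terms in your formula have \emph{opposite} signs: the inequality $\Gamma_{0,c}''(0)<0$ does not follow from your computation. (Ironically, the case $\Gamma_{0,c}(0)\geq\eta$ that you work to exclude is the trivial one: there $f\geq0$ and the formula gives $\Gamma_{0,c}''(0)\leq -c\lambda L'(c)<0$ at once.)

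The fix uses the definition of the switching point as the \emph{first} $s\geq s_k$ (resp.\ $s^*$ when $k=0$) at which $\Gamma_{0,c}'=\lambda L'(c)$: since $\Gamma_{0,c}'>\lambda L'(c)$ on $[s_k,0)$ and $\Gamma_{0,c}'(0)=\lambda L'(c)$, one gets $\Gamma_{0,c}''(0)\leq0$. If equality held, differentiating the ODE gives $\Gamma_{0,c}'''(0)=-f'(\Gamma_{0,c}(0))\,\lambda L'(c)<0$, because $\Gamma_{0,c}(0)>\Gamma_{0,c}(s_k)\geq\underline\eta$ (strict, since $\Gamma_{0,c}'>0$ on $(s_k,0)$) and $\Gamma_{0,c}(0)<\eta$ (from the ODE together with $\Gamma_{0,c}''(0)=0$), so $f'(\Gamma_{0,c}(0))>0$ by \eqref{Hyp:f2}. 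A Taylor expansion then yields $\Gamma_{0,c}'(s)<\lambda L'(c)$ for $s$ just to the left of $0$, contradicting the choice of the switching point. In short, it is the sign of $f'$ on $(\underline\eta,\eta)$, not that of $f$, that delivers the strict inequality.
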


\begin{figure}[h]\begin{center}
\includegraphics[scale=0.15]{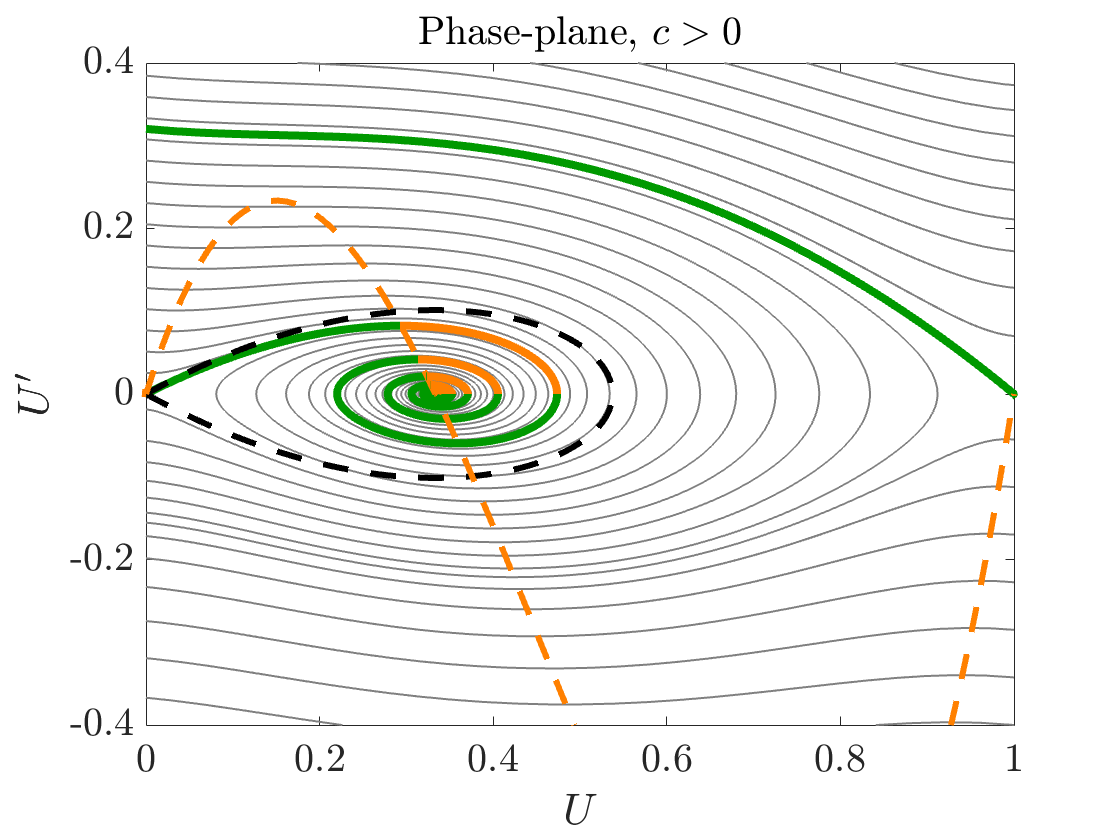}
\end{center}
\caption{Visual representation: we start acting when the dashed orange graph $\Xi$ is left of the (green) unstable manifold $S_0$. The solid orange line represents the points on $S_0$ we might ``jump'' from.}\label{Fig:4}
\end{figure}

\section{The strongly convex setting
    (proof of Theorems~\ref{Th:Quadratique} and~\ref{Th:Quadratique2})}
\label{sec:strongly_convex}
In this section, we prove Theorem~\ref{Th:Quadratique2} (since it suffices to take $k=0$ in Theorem~\ref{Th:Quadratique2} to recover Theorem~\ref{Th:Quadratique}). 
\begin{proof}[Proof of Theorem~\ref{Th:Quadratique2}] We let $k\in \N$. be fixed throughout the proof and we consider a couple $(c,\lambda)$ with $c\in( 0;c_k(\lambda))$, where $c_k$ is given by Proposition~\ref{Prop:exist_th_multiple_bumps}. We let $(\Theta_{\lambda,c},\M_{\lambda,c})$ be the reversed travelling wave provided by the latter proposition with exactly $k$ bumps.

To prove the theorem, it is enough to prove that for any representative player, the constant speed strategy $\overline \alpha\equiv c$ is optimal in the optimisation problem \eqref{Eq:OCPlayer}; this latter optimisation problem here rewrites, the player being, at time $t=0$ in the position $x(0)=x_0\in \mathrm{supp}(\M)$, 
\begin{equation}\label{Eq:OCPlayerQuadratique}
\sup_{\alpha\in L^\infty(\R)}J(x_0,\alpha):=\int_0^\infty e^{-\lambda t}\left(\Th_c(x_\alpha(t)-ct)-L(\alpha(t))\right)dt\text{ subject to }\begin{cases}\frac{dx_\alpha}{dt}=\alpha\,, \\ x_\alpha(0)=x_0.\end{cases}
\end{equation}
To this end, we use a concavity approach, first showing that, for any $c\,, \lambda$ with $0<c<c_k(\lambda)$, $\overline\alpha\equiv c$ is a critical\footnote{Criticality is to be understood as: for any perturbation $h\in L^\infty$, the Gateaux derivative of $J_{\lambda,c}$ at $\overline \alpha$ in the direction $h$ is zero.} point of $J_{\lambda,c}$ (Lemma~\ref{Le:Quadratique1}), then proving that, for $\lambda>0$ small enough, $J_{\lambda,c}$ is strictly concave in $\alpha$ (Lemma~\ref{Le:Quadratique2}).
 Throughout, we let $\overline \alpha$ be the constant control:
\[ \overline \alpha\equiv c.\]
\begin{lemma}\label{Le:Quadratique1}
For any $(c,\lambda)$ satisfying $0<c<c_k(\lambda)$, $\overline \alpha$ is a critical point of $J_{c,\lambda}$.
\end{lemma}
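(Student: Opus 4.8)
The plan is to show that the first-order Gateaux derivative of $J_{c,\lambda}$ at $\overline\alpha\equiv c$ vanishes in every direction $h\in L^\infty(\R)$. First I would write the trajectory perturbation: with $\alpha_\varepsilon=\overline\alpha+\varepsilon h$, the solution of $x'=\alpha_\varepsilon$, $x(0)=x_0$ is $x_\varepsilon(t)=x_0+ct+\varepsilon\int_0^th(r)\,dr$, so that $s_\varepsilon(t):=x_\varepsilon(t)-ct=x_0+\varepsilon\int_0^th(r)\,dr$, i.e. the base trajectory in similarity variables is the constant $s_0=x_0$ and $\partial_\varepsilon s_\varepsilon(t)\big|_{\varepsilon=0}=\int_0^th(r)\,dr$. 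Differentiating $J(x_0,\alpha_\varepsilon)=\int_0^\infty e^{-\lambda t}\big(\Th(s_\varepsilon(t))-L(\overline\alpha+\varepsilon h)\big)dt$ under the integral sign (justified by superlinearity of $L$ and the boundedness of $\Th,\Th'$ along the relevant portion of the profile), one gets
\begin{equation*}
\dot J_{c,\lambda}(\overline\alpha)[h]=\int_0^\infty e^{-\lambda t}\left(\Th'(x_0)\int_0^t h(r)\,dr-L'(c)h(t)\right)dt.
\end{equation*}
Here I am using that $x_0\in\supp(\M)$, where by Lemma~\ref{Le:MonotComp} (or directly from the construction in Proposition~\ref{Prop:exist_th_multiple_bumps}, via $\Th'\equiv\lambda L'(c)$ on $\supp(\M)$) the profile $\Th$ is affine, hence $\Th'(x_0)$ is the constant $\lambda L'(c)$; one must be slightly careful at the endpoints $0$ and $s_1$ of $\supp(\M)$, where $\Th$ is only piecewise $\mathscr C^2$, but since the perturbed trajectory starts at an interior point (or one can argue by one-sided derivatives and the continuity of $\Th$, using Lemma~\ref{Lem:Th''} for the behaviour just outside) this does not affect the computation of the first variation.

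Next I would evaluate the first term by Fubini: $\int_0^\infty e^{-\lambda t}\int_0^t h(r)\,dr\,dt=\int_0^\infty h(r)\int_r^\infty e^{-\lambda t}\,dt\,dr=\lambda^{-1}\int_0^\infty e^{-\lambda r}h(r)\,dr$. Substituting $\Th'(x_0)=\lambda L'(c)$ gives
\begin{equation*}
\dot J_{c,\lambda}(\overline\alpha)[h]=\lambda L'(c)\cdot\lambda^{-1}\int_0^\infty e^{-\lambda r}h(r)\,dr-L'(c)\int_0^\infty e^{-\lambda t}h(t)\,dt=0,
\end{equation*}
which holds for every $h\in L^\infty(\R)$. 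This is exactly the criticality claim. The only real subtlety—and what I expect to be the main obstacle to writing this cleanly—is the justification of differentiation under the integral together with the handling of the non-smoothness of $\Th$ at the boundary of $\supp(\M)$: for a fixed $h$ and small $\varepsilon$, the perturbed trajectory $s_\varepsilon(\cdot)$ stays (for $t$ in any bounded interval) in a small neighbourhood of $x_0$ in similarity variables, and since $\Th$ is globally Lipschitz with $\Th'$ bounded (it equals $\lambda L'(c)$ on $\supp(\M)$ and is bounded on the manifolds $S_0,S_1$ by Lemma~\ref{Le:Stab1}), the difference quotients are dominated by an integrable function $Ce^{-\lambda t}\cdot t\|h\|_\infty+Ce^{-\lambda t}\|h\|_\infty$; dominated convergence then yields the formula above. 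One should remark that $\Th'(x_0)$ being the \emph{same} constant for every $x_0\in\supp(\M)$ is precisely the structural feature, built into the construction via $c_k$, that makes $\overline\alpha\equiv c$ critical simultaneously for all representative players, and this is what permits the subsequent concavity argument of Lemma~\ref{Le:Quadratique2} to upgrade criticality to global optimality.
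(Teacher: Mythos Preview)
Your proof is correct and follows essentially the same approach as the paper: compute the Gateaux derivative of $J(x_0,\cdot)$ at $\overline\alpha$, obtain the same integral formula, and reduce the criticality condition to the identity $\Th'(x_0)=\lambda L'(c)$ on $\supp(\M)$, which holds by construction. The only cosmetic difference is that the paper packages the double integral \emph{via} an adjoint state $p$ (with $-p'=e^{-\lambda t}\Th'(x_0)$, $p(+\infty)=0$, giving $p(t)=\lambda^{-1}e^{-\lambda t}\Th'(x_0)$) whereas you apply Fubini directly; these are the same computation.
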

\begin{proof}[Proof of Lemma~\ref{Le:Quadratique1}]
Observe that when $\alpha=\overline \alpha$ we have $\Th_{\lambda,c}(x_\alpha(t)-ct)=\Th(x_0)$.
The Gateaux differentiability of $\alpha\mapsto J(x_0,\alpha)$ is immediate and, for any $\alpha\,, h\in L^\infty$, the Gateaux derivative of $J(x_0,\cdot)$ at $\alpha$ in the direction $h$ is given by 
\begin{align*}
\Dot J(x_0, \alpha)[h]&=\int_0^\infty e^{-\lambda t}\left(\Th_{\lambda,c}'(x_\alpha(t)-ct)\left(\int_0^t h\right)-L'(\alpha)h\right)dt
\end{align*}
We introduce the adjoint state $p_{\lambda,c,\alpha}$ as the unique solution of the equation 
\[\begin{cases}
-p_{\lambda,c,\alpha}'=e^{-\lambda t}\Th_{\lambda,c}'(x_\alpha(t)-ct)\,, 
\\ p_{\lambda,c,\alpha}(+\infty)=0.
\end{cases}\]
Thus $\dot J(x_0,\alpha)[h]$ rewrites as 
\[ \dot J(x_0,\alpha)[h]=\int_0^\infty \left(p_{\lambda,c,\alpha}-L'(\alpha)e^{-\lambda t}\right)h.
\]
The first-order optimality condition is then:
\[ p_{\lambda,c,\alpha}=L'(\alpha).\] Let us check that this condition is satisfied for $\alpha=\overline\alpha$. 

If $\alpha=\overline\alpha$, then $\Th_{\lambda}(x_\alpha(t)-ct)=\Th_{\lambda}(x_0)$, whence we deduce that 
\[ p_{\lambda,c,\alpha}(t)=\frac{1}\lambda e^{-\lambda t} \Th_{\lambda,c}'(x_0).\] Consequently, we are left with verifying that 
\[ \Th_{\lambda,c}'(x_0)=\lambda L'(c),\] which follows by our choice of travelling wave.
\end{proof}
\begin{lemma}\label{Le:Quadratique2} For any $k\in \N$, there exists a constant $\lambda_k(\underline D)$ such that for any $\lambda>0$ satisfying 
\[\tilde M<\lambda\]
the map $J_{\lambda,c}$ is strictly concave in $\alpha$.
\end{lemma}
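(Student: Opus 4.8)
The plan is to establish strict concavity of $J_{\lambda,c}(x_0,\cdot)$ by differentiating twice and showing the quadratic form associated with the second Gateaux derivative is strictly negative for $\lambda$ large. First I would compute $\ddot J(x_0,\overline\alpha)[h,h]$ (or more generally at a nearby $\alpha$): since $x_\alpha(t)=x_0+\int_0^t\alpha$, the trajectory depends linearly on $\alpha$, so the only nonlinearity in $J$ comes from $\Th_{\lambda,c}$ evaluated along the trajectory and from $L$. Differentiating the expression for $\dot J(x_0,\alpha)[h]$ once more gives a term $\int_0^\infty e^{-\lambda t}\Th_{\lambda,c}''(x_\alpha(t)-ct)\big(\int_0^t h\big)^2\,dt$ coming from the running payoff, and a term $-\int_0^\infty e^{-\lambda t}L''(\alpha)h^2\,dt$ coming from the Lagrangian cost. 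The structure to exploit is that $L''\geq\underline D>0$ uniformly, so the cost term contributes $-\underline D\int_0^\infty e^{-\lambda t}h^2\,dt$, while the payoff term is controlled by $\|\Th_{\lambda,c}''\|_{L^\infty}$; both of these are the ``$\tilde M$'' appearing in the statement, so that once $\lambda$ is large enough relative to $\|\Th_{\lambda,c}''\|_\infty/\underline D$ the cost term dominates.

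The key estimate is a weighted Hardy-type / Cauchy–Schwarz inequality: for $H(t):=\int_0^t h(\tau)\,d\tau$ one has
\begin{equation*}
\int_0^\infty e^{-\lambda t}H(t)^2\,dt\leq \frac{C}{\lambda^2}\int_0^\infty e^{-\lambda t}h(t)^2\,dt
\end{equation*}
for a universal constant $C$. Indeed, writing $H(t)^2=\big(\int_0^t h\big)^2\leq \big(\int_0^t e^{-\lambda\tau/2}e^{\lambda\tau/2}\,h\,d\tau\big)^2\leq \big(\int_0^t e^{-\lambda\tau}d\tau\big)\big(\int_0^t e^{\lambda\tau}h(\tau)^2d\tau\big)\leq \lambda^{-1}\int_0^t e^{\lambda\tau}h(\tau)^2\,d\tau$, then multiplying by $e^{-\lambda t}$ and integrating in $t$ via Fubini gives $\int_0^\infty e^{-\lambda t}H(t)^2\,dt\leq \lambda^{-1}\int_0^\infty h(\tau)^2 e^{\lambda\tau}\big(\int_\tau^\infty e^{-\lambda t}dt\big)d\tau=\lambda^{-2}\int_0^\infty h(\tau)^2\,d\tau$. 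Combining this with $|\Th_{\lambda,c}''(\cdot)|\leq\|\Th_{\lambda,c}''\|_{L^\infty(\R)}$, the second derivative satisfies
\begin{equation*}
\ddot J(x_0,\alpha)[h,h]\leq \Big(\frac{\|\Th_{\lambda,c}''\|_{L^\infty}}{\lambda^2}-\underline D\Big)\int_0^\infty e^{-\lambda t}h(t)^2\,dt,
\end{equation*}
so one sets $\tilde M:=\|\Th_{\lambda,c}''\|_{L^\infty(\R)}^{1/2}/\underline D^{1/2}$ (or a suitable variant, matching the paper's notation) and concludes that $\lambda>\tilde M$ forces strict concavity. Picking $\lambda_k(\underline D)$ to be any value past which the uniform bound on $\|\Th_{\lambda,c}''\|_\infty$ (which, by the construction in Proposition~\ref{Prop:exist_th_multiple_bumps}, depends on $c$ and hence on the range $c<c_k(\lambda)$, but is bounded by data) makes the inequality $\lambda>\tilde M$ hold gives the claim.

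The main obstacle I expect is controlling $\|\Th_{\lambda,c}''\|_{L^\infty(\R)}$ uniformly enough that it does not grow with $\lambda$ faster than $\lambda^2$. On $\supp(\M)$ the profile $\Th_{\lambda,c}$ is affine so $\Th_{\lambda,c}''=0$ there, and off the support $\Th_{\lambda,c}$ solves $-\Th''-c\Th'=f(\Th)$ with $0\le\Th\le1$, so $\|\Th''\|_\infty\leq\|f\|_{L^\infty(0,1)}+c\|\Th'\|_\infty$; since $c<c_k(\lambda)$ and $c_k$ is decreasing, and $\Th'=\lambda L'(c)$ on the support together with the phase-plane bounds on $\Gamma_{0,c}',\Gamma_{1,c}'$ controls $\|\Th'\|_\infty$, one gets a bound on $\|\Th_{\lambda,c}''\|_\infty$ in terms of $c$ alone, uniformly for $c$ in bounded sets — which is exactly the regime relevant once $\lambda$ is large (then $c_k(\lambda)$ is small). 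The bookkeeping of which constants depend on $\underline D$ versus on $f$ versus on the implicit upper bound for $c$ is the delicate part; everything else is the routine Hardy inequality above plus the first-order criticality already established in Lemma~\ref{Le:Quadratique1}.
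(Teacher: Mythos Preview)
Your overall strategy --- compute the second Gateaux derivative, bound $\Th_{\lambda,c}''$ uniformly, and use a weighted Hardy-type inequality to compare $\int_0^\infty e^{-\lambda t}\big(\int_0^t h\big)^2\,dt$ with $\int_0^\infty e^{-\lambda t}h^2\,dt$ --- is exactly the paper's. But there is a genuine gap in your Hardy inequality. Your Cauchy--Schwarz/Fubini computation correctly yields
\[
\int_0^\infty e^{-\lambda t}H(t)^2\,dt \;\leq\; \frac{1}{\lambda^2}\int_0^\infty h(\tau)^2\,d\tau,
\]
with \emph{no weight} on the right-hand side. You then write
\[
\ddot J(x_0,\alpha)[h,h]\leq \Big(\frac{\|\Th_{\lambda,c}''\|_{L^\infty}}{\lambda^2}-\underline D\Big)\int_0^\infty e^{-\lambda t}h(t)^2\,dt,
\]
but this does not follow: the payoff term is bounded by $\frac{\|\Th''\|_\infty}{\lambda^2}\int_0^\infty h^2$ (unweighted), while the cost term is $-\underline D\int_0^\infty e^{-\lambda t}h^2$ (weighted). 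Since $e^{-\lambda t}\leq 1$, the weighted integral is \emph{smaller} than the unweighted one, so you cannot combine them into a single negative multiple of either. The splitting $h=e^{-\lambda\tau/2}\cdot e^{\lambda\tau/2}h$ loses exactly the weight you need on the right.

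The fix is to replace your Cauchy--Schwarz splitting by integration by parts, which is what the paper does: with $v(t)=\int_0^t h$ and $v(0)=0$,
\[
\int_0^\infty e^{-\lambda t}v^2\,dt=\frac{2}{\lambda}\int_0^\infty e^{-\lambda t}vv'\,dt\leq \frac{2}{\lambda}\Big(\int_0^\infty e^{-\lambda t}v^2\Big)^{1/2}\Big(\int_0^\infty e^{-\lambda t}(v')^2\Big)^{1/2},
\]
hence $\int_0^\infty e^{-\lambda t}v^2\leq \frac{4}{\lambda^2}\int_0^\infty e^{-\lambda t}(v')^2$, with the \emph{same} weight on both sides. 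This gives $\ddot J\leq(4M_k\lambda^{-2}-\underline D)\int_0^\infty e^{-\lambda t}h^2$, and the rest of your argument (including the uniform bound on $\|\Th_{\lambda,c}''\|_\infty$, which the paper also invokes) goes through.
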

\begin{proof}[Proof of Lemma~\ref{Le:Quadratique2}]
Similar to the proof of Lemma~\ref{Le:Quadratique1}, note that the second-order Gateaux derivative of $J_{\lambda,c}$ at any $\alpha$ in a direction $h$ writes 
\[ \ddot J(x_0,\alpha)[h,h]=\int_0^\infty e^{-\lambda t}\left(\Th_{\lambda,c}''(x_\alpha(t)-ct)\left(\int_0^t h\right)^2-L''(\alpha)h^2\right)dt.\]
As we know from the equation satisfied by $\Th$, and from the fact that $c_k\leq 2\sqrt{f'(\eta)}$, that  $\Th_{\lambda,c}''$ is bounded uniformly, there exists a constant $M_k>0$ such that 
\[ \forall \lambda \in (0;\infty)\,, \forall c \in (0;c_k(\lambda))\,, \Vert\Th_{c,\lambda}''\Vert_{L^\infty}\leq  M_k.\]
Thus we have the following upper bound on the second-order derivative of $J_{\lambda,c}$: for any $\lambda>0$, for any $c\in (0;c_k(\lambda))$, for any $\alpha$ and any $h$,
\[ \ddot J(x_0,\alpha)[h,h]\leq M_k\int_0^\infty e^{-\lambda t}\left(\int_0^t h\right)^2dt-\underline D\int_0^\infty e^{-\lambda t}h^2(t)dt.\]
For any $h\in L^\infty$ define 
\[ v_h:t\mapsto \int_0^t h\] so that the previous inequality rewrites 
\[ \ddot J(x_0,\alpha)[h,h]\leq M_k\int_0^\infty e^{-\lambda t}v_h^2(t)dt-\underline D\int_0^\infty e^{-\lambda t}(v_h')^2(t)dt.
\]
However, observe that, for any $v\in W^{1,\infty}(\R)$ satisfying $v(0)=0$, we have 
\begin{align*}
\int_0^\infty e^{-\lambda t} v^2(t)dt&=\frac2\lambda \int_0^\infty e^{-\lambda t} v(t)v'(t)dt
\\&\leq \frac2\lambda \sqrt{\int_0^\infty e^{-\lambda t} v^2(t)dt}\sqrt{\int_0^\infty e^{-\lambda t}(v')^2(t)dt}
\end{align*}
whence 
\[ \int_0^\infty e^{-\lambda t} v^2(t)dt\leq \frac{4}{\lambda^2}\int_0^\infty e^{-\lambda t} (v')^2(t)dt.\] Thus we conclude that 
\[ \ddot J(x_0,\alpha)[h,h]\leq \left(\frac{4 M_k}{\lambda^2}-\underline D\right)\int_0^\infty e^{-\lambda t} (v')^2(t)dt.\]
Hence, if $\lambda>0$ is chosen so that 
\[ 4 M_k<\underline D \lambda^2\] (and $ M_k\,, \underline D$ do not depend on $\lambda$), we deduce that $\ddot J_{\lambda,c}$ is concave in $\alpha$. Consequently, any critical point is optimal, whence $\overline\alpha$ is optimal for \eqref{Eq:OCPlayerQuadratique}.
\end{proof}
\end{proof}

\section{The general setting (proof of Theorems~\ref{Th:Main}-\ref{Th:small_lambda})}
\label{sec:general_case}From Lemma~\ref{Le:MonotComp} we know that, in order for any reversed travelling wave $(\Th,\M)$  to be a reversed MFG travelling wave, $\Th$ has to be affine in the support of $\M$. The proofs of Theorems~\ref{Th:Main}-\ref{Th:small_lambda} both rely on the same core idea of comparing the travelling wave $(\Th,\M)$ provided by Proposition~\ref{Prop:exist_th_TW} or~\ref{Prop:exist_th_multiple_bumps} with a linear function $\tilde \Th$. This can be seen as some form of linearisation of the optimal control problem.
\subsection{Proof of Theorem~\ref{Th:Main}}

\paragraph{A preliminary result}
Therefore, let $\lambda>0$ and $0<c<c_0(\lambda)$, and consider the reversed travelling wave  $(\Th,\M)$  provided by Proposition \ref{Prop:exist_th_TW}. 
We define the linear extension of $\Th$ outside of $\supp(\M)$ as 
\begin{equation}
    \label{Eq:def_Tht}
    \Tht:s\mapsto(\lambda L'(c))s+\Theta(0),
\end{equation}
We introduce the auxiliary optimal control problem
\begin{equation*}
    \begin{aligned}
        &\cVt(s_0)
        =
        \sup_{\alpha\in L^\infty(\R,\R)}
        \cJt_0(s_0,\alpha)
        \\
        &\text{ where for any $t_0\geq 0$ }\quad \cJt_{t_0}(s_0,\alpha)
        =
        \left(\int_{t_0}^\infty e^{-\lambda t}\left(\Tht(s_{\alpha}(t))
                -L(\alpha)\right)dt\right)
        \text{ with }
        \begin{cases}
            s_\alpha'(t)=\alpha(s_\alpha(t))-c\,,
            \\
            s_\alpha(t_0)=s_0,\end{cases}
    \end{aligned}
\end{equation*}
\begin{lemma}\label{Le:OptimalStratTilde}
    Assume that $L$ is convex and $\mathscr C^1$.
    The constant control function $\overline \alpha\equiv c$
    is optimal in the definition of $\cVt$. Furthermore,
    for any $s_0\in \R$, 
    \[ \cVt(s_0)
        =
        \frac{\Tht(s_0)-L(c)}\lambda\,,
        \quad
        \text{ whereby }
        \quad
        \cVt'\equiv L'(c).\]
\end{lemma}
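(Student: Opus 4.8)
The plan is to solve the auxiliary problem $\cVt$ explicitly by exploiting the fact that the dynamics are affine in $s$ (through $\alpha$) and the running payoff $\Tht$ is \emph{linear} in $s$, so the problem essentially decouples into a deterministic, one-dimensional convex calculus-of-variations problem for which the Euler--Lagrange equation can be integrated by hand. First I would compute the value of the constant control $\overline\alpha\equiv c$: since $s_\alpha'(t)=\alpha-c\equiv 0$ the trajectory stays at $s_0$, so $\cJt_0(s_0,\overline\alpha)=\int_0^\infty e^{-\lambda t}(\Tht(s_0)-L(c))\,dt=\frac{\Tht(s_0)-L(c)}{\lambda}$. This already pins down the candidate value; it then remains to show no other control does better.

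The core step is an optimality argument. Because $L$ is convex and the payoff is $\Tht(s_\alpha(t))-L(\alpha(t))$ with $\Tht$ affine, the map $\alpha\mapsto \cJt_0(s_0,\alpha)$ is concave (the state enters linearly via $s_\alpha(t)=s_0+\int_0^t(\alpha-c)$, so $\Tht(s_\alpha(t))$ is affine in $\alpha$, and $-L(\alpha)$ is concave). Hence it suffices to verify that $\overline\alpha\equiv c$ is a critical point, i.e. that the Gateaux derivative vanishes in every direction $h\in L^\infty(\R)$. Writing $\lambda L'(c)$ for the (constant) slope of $\Tht$, the derivative at $\overline\alpha$ in direction $h$ is
\[
\dot{\cJt_0}(s_0,\overline\alpha)[h]
=\int_0^\infty e^{-\lambda t}\left(\lambda L'(c)\int_0^t h(\tau)\,d\tau - L'(c)h(t)\right)dt.
\]
Applying Fubini to the first term, $\int_0^\infty e^{-\lambda t}\lambda L'(c)\int_0^t h\,d\tau\,dt = L'(c)\int_0^\infty h(\tau)\left(\int_\tau^\infty \lambda e^{-\lambda t}\,dt\right)d\tau = L'(c)\int_0^\infty e^{-\lambda\tau}h(\tau)\,d\tau$, which exactly cancels the second term. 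Thus the derivative is zero for all $h$, and by concavity $\overline\alpha$ is a global maximizer, giving $\cVt(s_0)=\frac{\Tht(s_0)-L(c)}{\lambda}$; differentiating in $s_0$ and using $\Tht'\equiv\lambda L'(c)$ yields $\cVt'\equiv L'(c)$.

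I expect the main obstacle to be the rigorous justification of Gateaux differentiability and the interchange of derivative, integral and the Fubini step over the half-line, which requires controlling the growth of $s_\alpha(t)$ and of $\Tht(s_\alpha(t))$ as $t\to\infty$ (the integrand decays like $e^{-\lambda t}$ but the affine term grows linearly in $t$, so integrability is fine, but uniformity in $h$ on bounded sets of $L^\infty$ needs a short argument). A cleaner alternative that sidesteps differentiability entirely is a direct Jensen/concavity estimate: for any admissible $\alpha$, write $s_\alpha(t)-s_0=\int_0^t(\alpha-c)$ and use $\int_0^\infty e^{-\lambda t}\Tht(s_\alpha(t))\,dt = \frac{\Tht(s_0)}{\lambda} + \lambda L'(c)\int_0^\infty e^{-\lambda t}\left(\int_0^t(\alpha(\tau)-c)\,d\tau\right)dt$, then Fubini to rewrite this as $\frac{\Tht(s_0)}{\lambda}+L'(c)\int_0^\infty e^{-\lambda\tau}(\alpha(\tau)-c)\,d\tau$, so that
\[
\cJt_0(s_0,\alpha)=\frac{\Tht(s_0)-L(c)}{\lambda}-\int_0^\infty e^{-\lambda\tau}\big(L(\alpha(\tau))-L(c)-L'(c)(\alpha(\tau)-c)\big)\,d\tau,
\]
and the bracket is $\ge 0$ by convexity of $L$, with equality iff $\alpha\equiv c$. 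This makes both the optimality of $\overline\alpha$ and the formula for $\cVt$ transparent, and the formula $\cVt'\equiv L'(c)$ follows immediately.
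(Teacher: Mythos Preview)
Your proposal is correct and follows essentially the same approach as the paper: observe that $\alpha\mapsto\cJt_0(s_0,\alpha)$ is concave (affine $\Tht$ plus convex $L$), then verify that $\overline\alpha\equiv c$ is a critical point. The paper's proof is terser---it simply invokes the critical-point computation already done in Lemma~\ref{Le:Quadratique1}---whereas you spell out the Fubini cancellation explicitly; your alternative direct inequality
\[
\cJt_0(s_0,\alpha)=\frac{\Tht(s_0)-L(c)}{\lambda}-\int_0^\infty e^{-\lambda\tau}\bigl(L(\alpha(\tau))-L(c)-L'(c)(\alpha(\tau)-c)\bigr)\,d\tau
\]
is in fact cleaner than the paper's argument and is precisely the identity underlying the later comparison Lemma~\ref{Lem:compare_Tht}.
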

\begin{proof}
    Observe that 
    $s\mapsto\Tht(s)$ and
    $\alpha\mapsto s_\alpha$ are linear,
    so that the map 
    \[ \alpha \mapsto \int_0^\infty e^{-\lambda \xi}\tilde\Th(s_\alpha(\xi))d\xi\]
    is linear as well.
    Since $L$ is convex in $\alpha$, 
    $\Jt$ is concave in $\alpha$.
    Therefore,
    to prove that $c$ is an optimal control,
    it is sufficient to prove that it is a critical point of $\Jt(s_0,\cdot)$
    which follows by exactly the same arguments as in Lemma \ref{Le:Quadratique1}.
\end{proof}

The proof of Theorem \ref{Th:Main} rests upon the following comparison lemma:
\begin{lemma}
    \label{Lem:compare_Tht}
  Let $\lambda>0\,, c\in (0;c_0(\lambda))$ and $(\Th,\M)$ be provided by 
    Proposition \ref{Prop:exist_th_TW}. Let $\Tht$ defined by \eqref{Eq:def_Tht}.
      For any $s_0\in\R$, $T>0$
    and any $\alpha\in L^\infty(\R)$,
there holds    \begin{align*}
        \cVt(s_0)
        -\mathcal{J}_0(s_0,\alpha)
               &\geq
        e^{-\lambda T}
        \left(
            \cVt(s_\alpha(T))
            -\mathcal{J}_T(s_\alpha(T),\alpha)
        \right)
        +\int_0^T
        e^{-\lambda t}
        \left(
            \Tht(s_\alpha(t))
            -\Th(s_\alpha(t))
        \right)\,dt,
    \end{align*}
    where $\frac{d}{dt}s_\alpha(t)=\alpha(s_\alpha(t))-c$
    and $s_\alpha(0)=s_0$.
\end{lemma}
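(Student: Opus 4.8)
The plan is to reduce the inequality to an exact identity relating $\cVt$ along a trajectory to the integral of the running payoff, then exploit the optimality of $\overline\alpha\equiv c$ for $\cVt$ (Lemma~\ref{Le:OptimalStratTilde}) to generate the inequality. Concretely, fix $s_0\in\R$, $T>0$, $\alpha\in L^\infty(\R)$ and let $s_\alpha$ be the corresponding trajectory with $s_\alpha(0)=s_0$. First I would write the dynamic programming (sub)optimality principle for the \emph{auxiliary} problem $\cVt$: since $\cVt$ is the value function of the control problem with running payoff built from $\Tht$, for any admissible control on $[0,T]$ one has
\begin{equation*}
    \cVt(s_0)
    \geq
    \int_0^T e^{-\lambda t}\bigl(\Tht(s_\alpha(t))-L(\alpha(s_\alpha(t)))\bigr)\,dt
    +e^{-\lambda T}\cVt(s_\alpha(T)).
\end{equation*}
This is just the statement that following $\alpha$ up to time $T$ and then playing optimally from $s_\alpha(T)$ is one admissible strategy, hence no better than $\cVt(s_0)$. (Strictly, one should note the problem is autonomous in $s$ so the value function at later times coincides with $\cVt$, which is exactly the content of Lemma~\ref{Le:OptimalStratTilde}, $\cVt(s_0)=(\Tht(s_0)-L(c))/\lambda$, giving the time-homogeneity.)

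Next I would expand $\mathcal{J}_0(s_0,\alpha)$ and $\mathcal{J}_T(s_\alpha(T),\alpha)$ by their definitions and split the infinite-horizon integral for $\mathcal{J}_0$ at time $T$:
\begin{equation*}
    \mathcal{J}_0(s_0,\alpha)
    =
    \int_0^T e^{-\lambda t}\bigl(\Th(s_\alpha(t))-L(\alpha(s_\alpha(t)))\bigr)\,dt
    +e^{-\lambda T}\mathcal{J}_T(s_\alpha(T),\alpha),
\end{equation*}
where one uses the change of variables $t\mapsto t+T$ together with the fact that $s_\alpha$ restricted to $[T,\infty)$ solves the same ODE started at $s_\alpha(T)$; this is where the autonomy of the dynamics $s'=\alpha(s)-c$ is again essential. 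Subtracting this identity from the displayed inequality for $\cVt(s_0)$, the control-cost terms $-L(\alpha)$ cancel on $[0,T]$, leaving precisely
\begin{equation*}
    \cVt(s_0)-\mathcal{J}_0(s_0,\alpha)
    \geq
    e^{-\lambda T}\bigl(\cVt(s_\alpha(T))-\mathcal{J}_T(s_\alpha(T),\alpha)\bigr)
    +\int_0^T e^{-\lambda t}\bigl(\Tht(s_\alpha(t))-\Th(s_\alpha(t))\bigr)\,dt,
\end{equation*}
which is the claim.

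I expect the only genuine subtlety — and the step I would be most careful about — is justifying the subadditivity/dynamic-programming inequality for $\cVt$ cleanly at the level of the feedback formulation used in \eqref{Eq:DefVSV}: one must make sure that "follow $\alpha$ on $[0,T]$, then switch to the optimal feedback" is an admissible control in $L^\infty(\R)$ and that the associated trajectory is the concatenation described, so that its payoff is exactly the right-hand side above. Given Lemma~\ref{Le:OptimalStratTilde} this is painless because $\cVt$ is explicit and affine, so $\cVt(s_\alpha(T))$ can be computed directly and the concatenated control is simply $\alpha$ on $[0,T]$ and $c$ afterwards; one then checks the inequality by a one-line estimate using that $\overline\alpha\equiv c$ maximizes $\Jt$. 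Everything else is bookkeeping with the exponential weight $e^{-\lambda t}$ and the splitting of integrals at $T$; no regularity of $\Th$ beyond piecewise $\mathscr C^2$ (hence boundedness and local Lipschitzness, enough for existence/uniqueness of $s_\alpha$) is needed.
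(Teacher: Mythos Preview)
Your proof is correct and morally the same as the paper's, but the packaging differs. You invoke the dynamic programming (sub)optimality inequality for $\cVt$ and then split $\mathcal J_0$ at time $T$; the subtraction cancels the $L(\alpha)$-terms and leaves exactly the claimed inequality. The paper instead works by hand with the explicit affine form $\cVt(s)=(\Tht(s)-L(c))/\lambda$, rewrites $\int_0^T e^{-\lambda t}\Tht(s_0)\,dt$ via an integration by parts along the trajectory $s_\alpha$, and then uses the convexity of $L$ in the form $L(\alpha)-L(c)-(\alpha-c)L'(c)\geq 0$ to obtain the inequality. Your DPP step is precisely this convexity estimate in disguise: differentiating $e^{-\lambda t}\cVt(s_\alpha(t))$ and integrating recovers the same identity, and the sign comes from the same convexity of $L$. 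The only place to be careful, which you flag yourself, is that the controls in \eqref{Eq:DefVSV} are feedbacks, so the concatenation argument is not literally an admissible competitor; but as you note, the explicit formula for $\cVt$ lets one verify the suboptimality inequality directly without appealing to an abstract DPP. In short: same ingredients, slightly different presentation; your route is a bit more conceptual, the paper's a bit more explicit.
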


\begin{proof}[Proof of Lemma \ref{Lem:compare_Tht}]
    Using the fact that the constant
    control $c$ leads to a constant trajectory,
    and the definition of $J$,
    we obtain 
    \begin{equation*}
        \cVt(s_0)
        -\mathcal{J}_0(s_0,\alpha)
        =
        e^{-\lambda T}
        \left(
            \cVt(s_0)
            -\mathcal{J}_T(s_\alpha(T),\alpha)
        \right)
        +\int_0^T
        e^{-\lambda t}
        \Bigl(
            \Tht(s_0)
            -L(c)
            -\Th(s_\alpha(t))
            +L(\alpha(s_\alpha(t)))
        \Bigr)\,dt.
    \end{equation*}
    Recall that
    $\Tht(s_0)
    =\Tht(s)-\lambda L'(c)(s-s_0)$,
    we get that
    \begin{align*}
        \int_0^Te^{-\lambda t}
        \Tht(s_0)\,dt
        &=
        \int_0^Te^{-\lambda t}
        \Bigl(
        \Tht(s_\alpha(t))
        -\lambda L'(c)(s_\alpha(t)-s_0)
        \Bigr)\Th(s_0)\,dt
        \\
        &=
        \int_0^Te^{-\lambda t}
        \Tht(s_\alpha(t))\,dt
        +\Bigl[
        e^{-\lambda t}
        L'(c)(s_\alpha(t)-s_0)
        \Bigr]^T_0
        -\int_0^T
        e^{-\lambda t}
        L'(c)(\alpha(s_\alpha(t))-c)
        \,dt
        \\
        &=
        \int_0^Te^{-\lambda t}
        \Tht(s_\alpha(t))\,dt
        +e^{-\lambda T}
        L'(c)(s_\alpha(T)-s_0)
        -\int_0^T
        e^{-\lambda t}
        L'(c)(\alpha(s_\alpha(t))-c)
        \,dt.
    \end{align*}
    Recall that
    $\cVt(s)=\cVt(s_0)+L'(c)(s-s_0)$,
    we obtain
    \begin{align*}
        \cVt(s_0)
        -\mathcal{J}_0(s_0,\alpha)
        &\!\begin{multlined}[t][10.5cm]
        =
        e^{-\lambda T}
        \left(
            \cVt(s_\alpha(T))
            -\mathcal{J}_T(s_\alpha(T))
        \right)
        +\int_0^T
        e^{-\lambda t}
        \left(
            \Tht(s_\alpha(t))
            -\Th(s_\alpha(t))
        \right)\,dt
        \\
        +\int_0^T
        e^{-\lambda t}
        \Bigl(
            L(\alpha(s_\alpha(t)))
            -L(c)
            -(\alpha(s_\alpha(t))-c)L'(c)
        \Bigr)\,dt
        \end{multlined}
        \\
        &\geq
        e^{-\lambda T}
        \left(
            \cVt(s_\alpha(T))
            -\mathcal{J}_T(s_\alpha(T),\alpha)
        \right)
        +\int_0^T
        e^{-\lambda t}
        \left(
            \Tht(s_\alpha(t))
            -\Th(s_\alpha(t))
        \right)\,dt,
    \end{align*}
    where the last inequality is obtained
    using the convexity of $L$.
\end{proof}

We are now in a position to prove Theorem \ref{Th:Main}.
\begin{proof}[Proof of Theorem \ref{Th:Main}]

\label{subsec:proof_Th_Main}
For a fixed discount factor $\lambda>0$,
the proof consists in proving
that the constant control function~$\overline \alpha\equiv c$
is optimal in \eqref{Eq:OCPlayer} whenever $c$ is small enough. To be more precise, we are going to show that,
    for $c$ small enough,
    for any $s_0\in\supp(\M)$,
    \begin{equation}
        \label{Eq:aux_Th_Main}
        \Jt_0(s_0,c)
        -\sup_{\alpha\in L^{\infty}} J_0(s_0,\alpha)
        \geq
        0.
    \end{equation}
 The proof of this fact relies on a comparison with the auxiliary problem defined by $\tilde\cV$. Lemma \ref{Lem:compare_Tht} is an
    an essential argument in the forthcoming analysis.

We let $\lambda>0$ and $c\in (0;c_0(\lambda))$
and we consider the couple
$(\Th,\M)$ provided by
Proposition \ref{Prop:exist_th_TW}. Up to a translation, we assume that $\inf(\supp(\M))=0$.
Define $\Tht$ by \eqref{Eq:def_Tht}
with $\Tht(0)=\Th(0)$.

\paragraph{Intersection between $\Th$ and $\Tht$}
Define
\[ s_-(\lambda,c):=\sup\{s\leq 0\,, \tilde \Th(s)<\Th(s)\}.\]
 As by construction $\Th''_-(0)<0$, we have 
 \[ s_-(\lambda,c)<0.\] 
 
\begin{figure}
\includegraphics[scale=0.2]{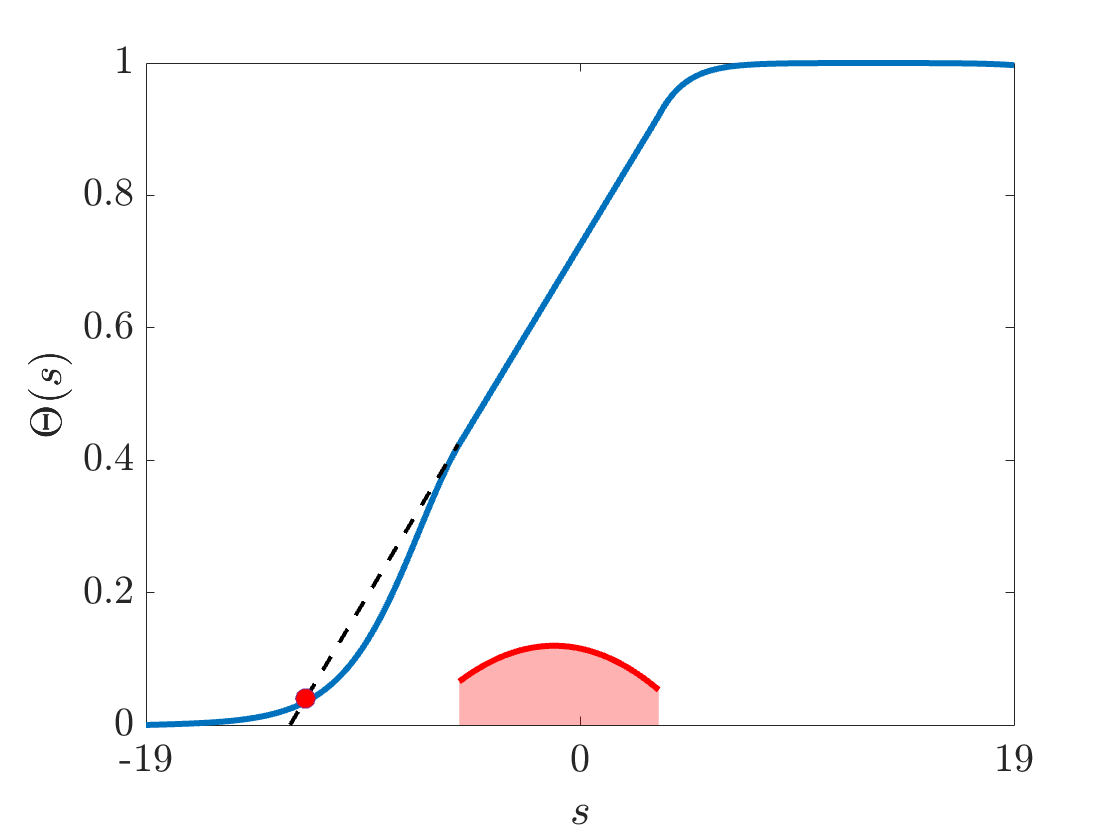}
\includegraphics[scale=0.2]{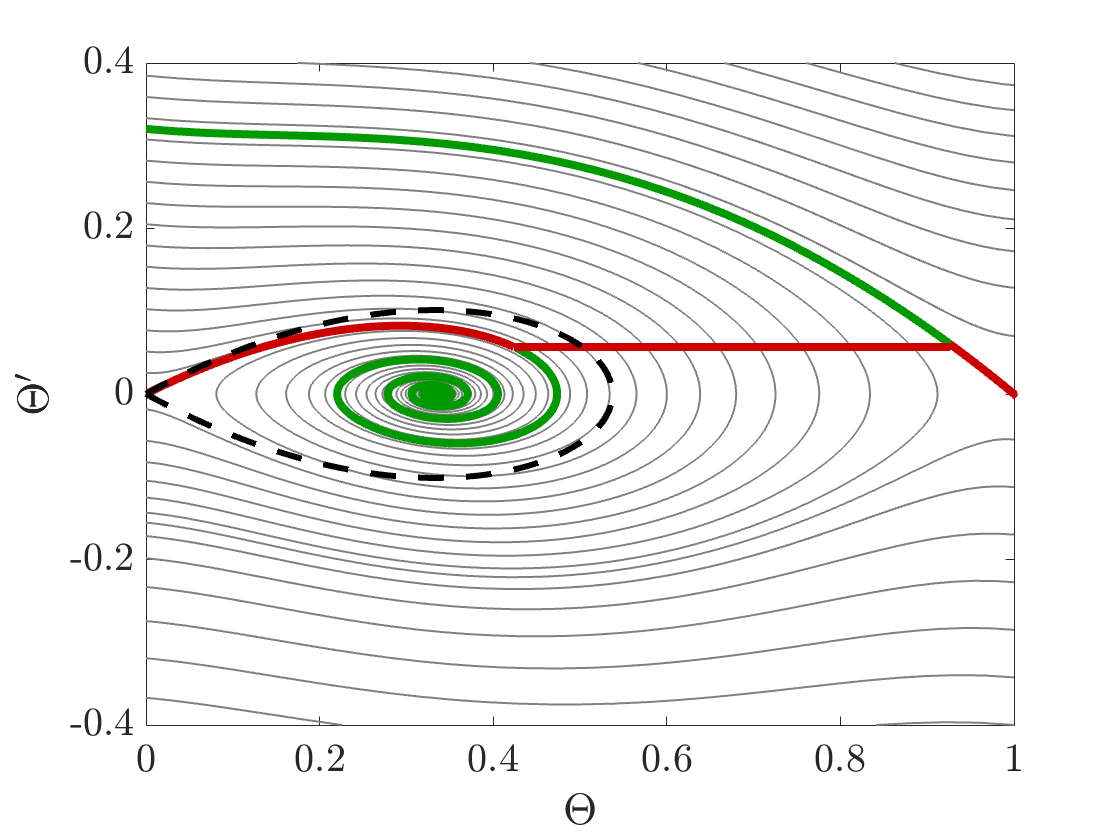}
\caption{(Left) Reversed traveling wave $\Theta$ (in blue) and $\mathcal{M}$ (the subgraph in red); the dashed black line has the same slope as $\Theta$ in the support of $\mathcal{M}$ and the red point represents $s_-(\lambda,c)$; (right) The corresponding phase-plane representation of the traveling wave represented on the left.}\label{Fig:slambdac}
\end{figure}

Furthermore, it follows by construction that
\begin{equation}
    \label{Eq:def_s-}
    \Tht(s)<\Th(s)
    \,\text{ for }\,
    s< s_-(\lambda,c)
    \quad\text{ and }\quad
    \Tht(s)\geq\Th(s)
    \,\text{ for }\,
    s\geq s_-(\lambda,c).
\end{equation}
 We also note, for further references, the two following obvious properties of $s_-$:
 \begin{equation}\label{Eq:S-}
 s_-(\lambda,c)\underset{c \to 0}\rightarrow -\infty\,, \Th(s_-(\lambda,c))\underset{c\to 0}\rightarrow 0.
 \end{equation}
 This is merely a consequence of the construction of $(\Th,\M)$. Indeed, as $c\to 0$, $\Th'(c)=\lambda L'(c)\underset{ c\to 0}\to 0$ as well and $\inf(\supp(\M))=s_{00}$ where $s_{00}$ is the unique local maximiser of the curve $\Gamma_{0,0}$, the unstable manifold of $\begin{pmatrix}0\\0\end{pmatrix}$ in \eqref{Eq:ODE} when $c=0$.

\paragraph{Restriction of the class of admissible controls} Our goal is to prove that, for any $s_0\in \supp(\M)$, we can restrict ourselves to studying controls $\alpha$ such that 
\begin{equation}\label{Eq:PositivControl}\alpha\geq 0\text{ a.e., }\end{equation} and that satisfy
\begin{equation}\label{Eq:ReachedIntersection} \exists T_1=T_1(\alpha)\,, s_\alpha(T_1)=s_-(\lambda,c).\end{equation}
First of all, $\Th$ is an increasing function and $0$ is a minimum of the Lagrangian $L$, whence, for any $\alpha$,  
\[\forall s_0\in \mathrm{supp}(\mathcal{M})\quad \mathcal{J}_0(s_0,\alpha_+)\geq \mathcal{J}_0(0,\alpha).\] In particular, we can assume that \eqref{Eq:PositivControl} is satisfied. Second, assume \eqref{Eq:ReachedIntersection} does not hold for an optimal control $\alpha$. In that case, the entire trajectory $s_\alpha$ satisfies $s_\alpha>s_-(\lambda,c)$, whence 
\[ \mathcal{J}_0(s_0,\alpha)\leq \cJt_0(s_0,\alpha)\leq\cJt_0(s_0,c)=\mathcal{J}_0(s_0,c),\] and we conclude that $\overline\alpha\equiv c$ is optimal in \eqref{Eq:OCPlayer}.  Thus the only case that remains to be treated is that of controls $\alpha$ such that \eqref{Eq:ReachedIntersection} holds. As a consequence of \eqref{Eq:ReachedIntersection}, we further deduce that we can restrict ourselves to controls $\alpha$ satisfying 
\begin{equation}\label{Eq:BoundedControls1}
0\leq \alpha< c\text{ in }(s_-(\lambda,c);s_0).\end{equation} 
Finally, this implies that, up to replacing with $\min(\alpha,c)$, we can choose $\alpha$ so that
\begin{equation}\label{Eq:BoundedControls}
0\leq \alpha< c\text{ in }\R.\end{equation} 

    Consequently,
    it is sufficient to prove
    \eqref{Eq:aux_Th_Main}
    when the supremum is taken over
    control functions
    $\alpha$ with values
    in $[0,c)$,
    and such that
    there exists $T_1(\alpha)>0$
    satisfying $s_\alpha(T_1(\alpha))=s_-(\lambda,c)$ (and $T_1(\alpha)$ is chosen as the first root of this equation).

 In this case, and since $s_\alpha$ is decreasing in $[0;T_1(\alpha))$,
    we can define
    $T_{\frac12}(\alpha)\in (0;T_1(\alpha))$ as the unique root of the equation $s_\alpha(T_{\frac12}(\alpha))=\frac{s_-(\lambda,c)}{2}$.
    Recall that $J(s_0,c)=\cVt(s_0)$ since $s_0\in\supp(\M)$.
    Using Lemma \ref{Lem:compare_Tht}
    with  $T=T_{\frac12}(\alpha)$,
    and using the inequality
    $\Tht(s_\alpha(t))
    \geq\Th(s_\alpha(t))$
    for $t\leq T_1(\alpha)$,
    we obtain
    \begin{equation}\label{Ineq1}
        \mathcal{J}_0(s_0,c)
        -\mathcal{J}_0(s_0,\alpha)=\cVt(s_0)-J_0(s_0,\alpha)
        \geq
        e^{-\lambda T_{\frac12}(\alpha)}
        \Bigl(
        \cVt\left(s_\alpha\left(T_{\frac12}(\alpha)\right)\right)
        -J_{T_{\frac12}(\alpha)}\left(s_\alpha\left(T_{\frac12}(\alpha)\right),\alpha\right)
        \Bigr).
    \end{equation}
    Using once again Lemma \ref{Lem:compare_Tht}, $s_0$ being replaced with $s_\alpha\left(T_{\frac12}(\alpha)\right)$ and $T$ with $T_1(\alpha)-T_{\frac12}(\alpha)$,
    we get
    \begin{multline}\label{Ineq2}
        \cVt\left(s_\alpha\left(T_{\frac12}(\alpha)\right)\right)
        -\mathcal{J}_{T_{\frac12}(\alpha)}\left(s_\alpha\left(T_{\frac12}(\alpha)\right),\alpha\right)
        \geq
        e^{-\lambda\left(T_1(\alpha)-T_{\frac12}(\alpha)\right)}
        \Bigl(
        \cVt(s_\alpha(T_{1}(\alpha)))
        -\mathcal{J}_{T_{1}(\alpha)}(s_\alpha(T_{1}(\alpha)),\alpha)
        \Bigr)
        \\
        +\int_{T_{\frac12}(\alpha)}^{T_1(\alpha)}
        e^{-\lambda \left(t-T_{\frac12}(\alpha)\right)}
        \Bigl(\Tht(s_\alpha(t))
        -\Th(s_\alpha(t))
        \Bigr)\,dt.
    \end{multline}
    Using $\alpha<c$,
    it is straight forward to check that
 \begin{equation}\label{Ineq3}
        \cVt(s_\alpha(T_{1}(\alpha)))
        -\mathcal{J}_{T_{1}(\alpha)}(s_\alpha(T_{1}(\alpha)),\alpha)
        \geq
        -\lambda^{-1}L(c).
    \end{equation}
    Indeed,
    \begin{align*}
       \cVt(s_\alpha(T_{1}(\alpha)))
        -\mathcal{J}_{T_{1}(\alpha)}(s_\alpha(T_{1}(\alpha)),\alpha)&=\int_0^\infty e^{-\lambda t}\left(\Tht (s_\alpha(T_1(\alpha)))-\Th(s_\alpha(T_1(\alpha)+t))\right)dt
        \\&+\int_0^\infty e^{-\lambda t}\left(-L(c)+L(\alpha(s_\alpha(T_1(\alpha))+t)\right)dt
        \\&-\int_0^\infty e^{-\lambda t} L(c)dt=        -\lambda^{-1}L(c).
   \end{align*}
   We used the fact that $\Th$ is increasing, that $L\geq 0$ and that $\alpha<c$.

    Combining \eqref{Ineq1}-\eqref{Ineq2}-\eqref{Ineq3} 
    we obtain
    \begin{equation}
        \label{Eq:ineq_J_T12}
        e^{\lambda T_{\frac12}(\alpha)}
        (\mathcal{J}_0(s_0,c)
        -\mathcal{J}_0(s_0,\alpha))
        \geq
        -\lambda^{-1}L(c)
        e^{-\lambda\left(T_1(\alpha)-T_{\frac12}(\alpha)\right)}
        +\int_{T_{\frac12}(\alpha)}^{T_1(\alpha)}
        e^{-\lambda\left(t-T_{\frac12}(\alpha)\right)}
        \Bigl(\Tht(s_\alpha(t))
        -\Th(s_\alpha(t))
        \Bigr)\,dt.
    \end{equation}
We now provide an $\alpha$-independent lower bound on the right-hand side of \eqref{Eq:ineq_J_T12} when $c$ is small enough.
    We start by bounding by below the exponent
        $\lambda
        \left(T_1(\alpha)
        -T_{\frac12}(\alpha)\right)$.
    The inequality $\alpha\geq0$ implies
    \begin{equation*}
        \frac{s_-(\lambda,c)}2
        =
        s_\alpha(T_1(\alpha))
        -s_\alpha(T_{\frac12}(\alpha))
        \geq
        -c\left(T_1(\alpha)
        -T_{\frac12}(\alpha)\right)
    \end{equation*}
    which leads to
    \begin{align*}
        \frac{\Th(0)-\Th(s_-(\lambda,c))}2
        =
        \Tht(s(T_{\frac12}(\alpha)))
        -\Tht(s(T_1(\alpha)))
        &=
        \lambda L'(c)
        \left(s(T_{\frac12}(\alpha))
        -s(T_1(\alpha))\right)
        \\
        &\leq
        \lambda c L'(c)
        \left(T_1(\alpha)
        -T_{\frac12}(\alpha)\right),
    \end{align*}
whence
    \begin{equation*}
        \lambda
        (T_1(\alpha)
        -T_{\frac12}(\alpha))
        \geq
        \frac{\Th(0)-\Th(s_-(\lambda,c))}{2cL'(c)}.
    \end{equation*}
    One may notice that the right-hand side
    of the latter inequality is independent
    of $\alpha$, but depends on $\lambda$ and $c$.
    Moreover, recall that from \eqref{Eq:S-}
$        s_-(\lambda,c)
        \underset{c \to 0}\rightarrow
        -\infty\,,
        \Th(s_-(\lambda,c))\underset{c\to 0}\rightarrow 0.
$    
Now, on the one hand,
    we have
    \begin{equation*}
        \lambda^{-1}L(c)
        e^{-\lambda(T_1(\alpha)-T_{\frac12}(\alpha))}
        \leq
        \lambda^{-1}L(c)
        e^{-\frac{\Th(0)-\Th(s_-(\lambda,c))}{2cL'(c)}},
    \end{equation*}
    where the right-hand side is independent of $\alpha$
    and converges to $0$  as $c\to 0$.
    
    On the other hand,
    for $c$ small enough,
    we have 
    $T_1(\alpha)-T_{\frac12}(\alpha)\geq1$. As $s_\alpha$ is decreasing, as $\Th$ and $\Tht$ are increasing, the inequality $e^{-x}(1+x)\leq 1\, (\text{for } x\geq 0)$ implies
    \begin{align*}
        \int_{T_{\frac12}(\alpha)}^{T_1(\alpha)}
        e^{-\lambda(t-T_{\frac12}(\alpha))}
        \Bigl(\Tht(s_\alpha(t))
        -\Th(s_\alpha(t))
        \Bigr)\,dt
        &\geq
        \int_{T_{\frac12}(\alpha)}^{T_{\frac12}(\alpha)+1}
        e^{-\lambda(t-T_{\frac12}(\alpha))}
        \Bigl(\Tht(s_\alpha(t))
        -\Th(s_\alpha(t))
        \Bigr)\,dt
        \\
        &\geq
        e^{-\lambda}
        \left(\Tht(s_\alpha(T_{\frac12}+1))
        -\Th(s_\alpha(T_{\frac12}))\right)
        \\
        &\geq
        e^{-\lambda}
        \left(
        \Tht\left(\frac{s_-(\lambda,c)}2\right)
        -c\lambda L'(c)
        -\Th\left(\frac{s_-(\lambda,c)}2\right)
        \right).
    \end{align*}
    Observe that
    the right-hand side of the last
    inequality is independent of $\alpha$
    and converges, as $c\to 0$, to        $e^{-\lambda}
        \frac{\Th(0)}2>0$.

    Combining the latter inequalities with
    \eqref{Eq:ineq_J_T12}, we obtain
    \begin{align*}
        e^{\lambda T_{\frac12}(\alpha)}
        (\mathcal{J}_0(s_0,c)
        -\mathcal{J}_0(s_0,\alpha))
        &\geq
        -\lambda^{-1}L(c)
        e^{-\frac{\Th(0)-\Th(s_-(\lambda,c))}{2cL'(c)}}
        +e^{-\lambda}
        \left(
        \Tht\left(\frac{s_-(\lambda,c)}2\right)
        -c\lambda L'(c)
        -\Th\left(\frac{s_-(\lambda,c)}2\right)
        \right)
        \\
        &>
        0,
    \end{align*}
    when $c$ is small enough, i.e.,
    $c\leq c_1(\lambda)$ for some
    $c_1(\lambda)>0$.
    This implies that
    $\mathcal{J}_0(s_0,c)\geq V(s_0)$
    leading to 
    $\mathcal{J}_0(s_0,c)= V(s_0)$
    and the fact that the constant
    control $c$ is optimal.
    Therefore,
    $(c,\Vt,\M,\Th)$
    is indeed a
    monotonous reversed MFG travelling wave
    in the sense of Definition \ref{De:ReversedMFGTW}.
\end{proof}

\subsection{Solutions with maximal velocities (proof of Theorem \ref{Th:small_lambda})}
\label{subsec:proof_small_lambda}
The core ideas to prove Theorem \ref{Th:small_lambda} are similar to those that were used when dealing with Theorem \ref{Th:Main}. Recall that \eqref{Eq:S-} was an important step; here, we will rather take $c\in (0;c_{\max})$ and let $\lambda \to 0$. While we will observe that there still holds $s_-(\lambda,c)\to-\infty$ as $\lambda\to 0$, we need to be much more careful when handling the different estimates required.

\begin{proof}[Proof of Theorem \ref{Th:small_lambda}]
    Here, we consider a fixed $c\in(0,c_{\max})$; the discount factor $\lambda$ is first chosen small enough to ensure that $c_0(\lambda)\geq c_{\max}$, and will be adjusted in the course of the proof.    We still work, up to a translation, with $0=\inf(\supp(\M))$. Recall that $c<c_{\max}$
    implies
    $\Vert \Gamma_{0,c}\Vert_{L^\infty}>L(c)$,
    so there exists
    $r\in(0,1)$
    such that
    $(1-3r)\Vert \Gamma_{0,c}\Vert_{L^\infty}>L(c)$.
    Recall that $\Tht_c(s_-(\lambda,c))=\Th_c(0)+s_-(\lambda,c)\lambda L'(c)$, and  that $\lambda\mapsto \Th_c(0)$
    is a decreasing function which converges
    to $\Vert\Gamma_{0,c}\Vert_{L^\infty}$ when $\lambda$
    goes to zero.

    This implies that
    \begin{equation*}
        -\lambda s_-(\lambda,c)
        \underset{\lambda\to0}\rightarrow
        \frac{\Vert \Gamma_{0,c}\Vert_{L^\infty(\R)}}{L'(c)}.
    \end{equation*}
        Therefore there exists
    $\lambda_r(c)>0$ such that
    for all $\lambda<\lambda_r(c)$,
    we have
    \begin{equation}\label{Eq:Yu}
        \Th_c(0)\geq(1-r)\Vert \Gamma_{0,c}\Vert_{L^\infty}
        \quad
        \text{ and }
        \quad
        -\lambda s_-(\lambda,c)
        \leq
        2\frac{\Vert \Gamma_{0,c}\Vert_{L^\infty}}{L'(c)}.
    \end{equation}
Henceforth, we always assume that $\lambda<\lambda_r(c)$.
    
    Similar arguments as
    in the proof of Theorem
    \ref{Th:Main} in Section
    \ref{subsec:proof_Th_Main}
    imply that
    it is sufficient to prove
    \eqref{Eq:aux_Th_Main}
    where the supremum is taken
    over control functions
    $\alpha$ with $0\leq\alpha<c$
    and such that $s_-(\lambda,c)$
    is reached at some time
    $T_1(\alpha)>0$,
    where $s_-(\lambda,c)$ is defined
    by \eqref{Eq:def_s-}.
    Let us define $T_r(\alpha)>0$
    such that
    $s_\alpha(T_r(\alpha))=rs_-(\lambda,c)$.
    Using Lemma \ref{Lem:compare_Tht} from
    $s_0\in\supp(\M)$ with $T=T_r(\alpha)$,
    the equality $\mathcal{J}_0(s_0,c)=\cVt(s_0)$
    and the fact that
    $\Tht(s_\alpha(t))
    \geq \Th(s_\alpha(t))$
    for $t\leq T_r(\alpha)$,
    we obtain
    \begin{equation*}
        \mathcal{J}_0(s_0,c)
        -\mathcal{J}_0(s_0,\alpha)
        \geq
        e^{-\lambda T_r(\alpha)}
        \left(\cVt(s_\alpha(T_r(\alpha))
        -J_{T_r(\alpha)}(s_\alpha(T_r(\alpha)),\alpha)\right).
    \end{equation*}
    On the one hand,
    we have
    \begin{align*}
        \lambda\cVt(s_\alpha(T_r(\alpha)))
        &=
        \Tht(s_\alpha(T_r(\alpha)))
        -L(c)
        \\
        &=
        \Th(0)
        +s_\alpha(T_r(\alpha))\lambda L'(c)
        -L(c)
        \\
        &=
        \Th(0)
        +rs_-(\lambda,c)\lambda L'(c)
        -L(c)
        \\
        &\geq
        (1-r)
        \Th_{0,\max}(c)
        +2r
        \Th_{0,\max}(c)
        -L(c)
        >
        0,
    \end{align*}
    where the first line comes
    from the characterization of $\cVt$
    in Lemma \ref{Le:OptimalStratTilde},
    the second from
    the linearity of $\Tht$,
    the third from
    the definition of $T_r(\alpha)$
    and the last from
    the definition of $r$ and \eqref{Eq:Yu}.
  
    On the other hand,
    using the definitions
    of $\mathcal{J}$ and $T_r(\alpha)$,
    the facts that $L$ is nonnegative
    and that $\Th\circ s_\alpha$
    is decreasing,
    we get
    \begin{align*}
        \lambda 
        \mathcal{J}_{T_r(\alpha)}(s_\alpha(T_r(\alpha)),\alpha)
        &\begin{multlined}[t][10.5cm]
        =
        \lambda\int_{T_r(\alpha)}^{T_1(\alpha)}
        e^{-\lambda (t-T_r(\alpha))}
        \left(
            \Th(s_\alpha(t))
            -L(\alpha(s_\alpha(t)))
        \right)\,dt
        \\
        +\lambda\int_{T_1(\alpha)}^{\infty}
        e^{-\lambda (t-T_r(\alpha))}
        \left(
            \Th(s_\alpha(t))
            -L(\alpha(s_\alpha(t)))
        \right)\,dt
        \end{multlined}
        \\
        &\leq
        \lambda\int_{T_r(\alpha)}^{T_1(\alpha)}
        e^{-\lambda (t-T_r(\alpha))}
        \left(
            \Th(s_\alpha(T_r(\alpha)))
        \right)\,dt
        +\lambda\int_{T_1(\alpha)}^{\infty}
        e^{-\lambda (t-T_r(\alpha))}
        \Th(s_\alpha(T_1(\alpha)))
        \,dt
        \\
        &\leq
        (1
        -e^{-\lambda (T_1(\alpha)-T_r(\alpha))})
        \Th(s_\alpha(T_r(\alpha)))
        +e^{-\lambda (T_1(\alpha)-T_r(\alpha))}
        \Th(s_\alpha(T_1(\alpha)))
        \\
        &\leq
        \Th(s_-(\lambda,c))
        +\Th(rs_-(\lambda,c)),
    \end{align*}
    where the last the last right-hand side
    is independent of $\alpha$ and convergent
    to $0$ when $\lambda$ tends to $0$.
    Combining the latter inequalities,
    we finally obtain,
    \begin{equation}\label{Eq:K}
        \lambda
        e^{-\lambda T_r(\alpha)}
        \left(
        \mathcal{J}_0(s_0,c)
        -\mathcal{J}_0(s_0,\alpha)
        \right)
        \geq
        (1-3r)\Th_{0,\max}(c)-L(c)
        -\Th(s_-(\lambda,c))
        -\Th(rs_-(\lambda,c)),
    \end{equation}
    which is positive
    for $\lambda$ small enough,
    uniformly with respect to $\alpha$.
    This implies that
    $J(s_0,c)\geq V(s_0)$
    leading to 
    $J(s_0,c)= V(s_0)$
    and the fact that the constant
    control $c$ is optimal.
    Therefore,
    $(\lambda,c,\Vt,\M,\Th)$
    is indeed a
    monotonous reversed MFG travelling wave
    in the sense of Definition \ref{De:ReversedMFGTW}.
\end{proof}

\begin{remark}\label{Re:UniformityInL}
A key point in the proof of Theorem \ref{Th:Cooperation} is that Theorem \ref{Th:small_lambda} is in a sense ``uniform in $L$" in the following sense: consider $c$ fixed and, use the notation $s_-(L,\lambda)$ rather than $s_-(\lambda,c)$ to emphasise the dependence of $s_-$ on $L$. Assume that two Lagrangians $L_1$ and $L_2$ and two discount factors $\lambda_1\,, \lambda_2>0$ satisfy
\[L_1(c)=L_2(c)\text{ and } \lambda_1 L_1'(c)=\lambda_2L_2'(c).\] Then, clearly $\Th_{c,\lambda_1,L_1}=\Th_{c,\lambda_2,L_2}$ and $\Tht_{c,\lambda_1,L_1}=\Tht_{c,\lambda_2,L_2}$ so that $s_-(L_1,\lambda_1)=s_-(L_2,\lambda_2)$. In particular, if $(r,\lambda_1)$ are chosen so as to satisfy \eqref{Eq:Yu}, and such that the right hand-side of \eqref{Eq:K} is positive for $L=L_1$, then the same conditions are met by $(r,\lambda_2)$ and $L_2$. In particular, the optimality of $\overline \alpha\equiv c$ for the triplet $(c,\lambda_1,L_1)$ implies the optimality of $\overline \alpha\equiv c$ for $(c_,\lambda_2,L_2)$.
\end{remark}

\section{Proof of Theorem~\ref{Th:Cooperation}}
\begin{proof}[Proof of Theorem \ref{Th:Cooperation}]
    \emph{First step: define $\alpha_{\rm co}$.}
    We first need to introduce the Lagrangian
    \[ L_1:\alpha\mapsto\frac{\underline\eta}2 |\alpha|^2,\]
    where $\underline\eta$ is the unique critical point of $f$ in $[0;\eta]$.
    As $L_1(1)=\frac{\underline \eta}2$,
    \eqref{Eq:Tau0} ensures that $c_{\max}(L_2)>1$.
    Fix $c=1$,
    by Theorem~\ref{Th:small_lambda}, there exists $\lambda_0>0$
    such that there exists a monotonous reversed MFG travelling wave
    $(c,\lambda_0,\Th,\M,\cV)$
    with $\supp\M=[0,s_1]$.
    We are now in position to define $\alpha_{\rm co}$ as
    \[ \alpha_{\rm co}:(t,x)\mapsto \frac{x}{2s_1+t}.\]
    This implies that the solution of $\dot{x}_t=\alpha_{\rm co}(t,x_t)$
    starting from $x_0$ is affine and given by
    \[x_t=x_0+\frac{x_0}{2s_1}t.\]

    \emph{Second step: prove that the fishes are invading with $\alpha_{\rm co}$.}
    We define $(m,\th)$ as the solutions of
\[
    \begin{cases}\partial_tm
        +\partial_x(\alpha_{\rm co}m)
        =
        0
\\
        m(0,\cdot)= \M,
        \\
       \partial_t\th
        -\partial^2_{xx}\th
        =
        f(\th)
        -m\th,
 
 \\       \th(0,\cdot)= \Th.\end{cases}
\]    Observe that
    \begin{equation*}
        m(t,x)
        =
        \frac{2s_1}{2s_1+t}
        \M\left(\frac{2s_1+t}{2s_1}x\right).
    \end{equation*}
For any $\e>0$ and any $t\geq T_1(\e):=\frac{2s_1(\|\M\|_{L^{\infty}(\R)}-\e)}{\e}$,
    we have
    \[\|m(t,\cdot)\|_{L^{\infty}(R)}\leq\e.\]
    This implies that on $[T_1(\e),\infty)$,
    $\th$ satisfies
    \begin{equation*}
        \partial_t\th
        -\partial^2_{xx}\th
        \geq
        f(\th)
        -\e\th,
        \;\text{ with }\;
        \lim_{x\to -\infty}\th(T_1(\e),x)=0
        \;\text{ and }\;
        \lim_{x\to \infty}\th(T_1(\e),x)=1
    \end{equation*}
    Consider the non-linearity $f_{\e}:[0,1]\ni u\mapsto f(u)-\e u$.
    For $\e>0$ small enough, $f_\e$ is a bistable non linearity,
    with $\int_0^1 \tilde f_\e>0$;
    we call $(1-\delta_{\e})$ the largest root of $f_\e$ in $(0;1)$,
    it converges to $1$ as $\e$ tends to $0$.
    Using the maximum principle,
    the asymptotic behaviour of $\th(T_1(\e),\cdot)$
    and Theorem~\ref{Th:WithoutFisherman},
    we deduce that for $\e>0$ small enough 
    \[ \inf_{x\in \R_+}\lim_{t\to \infty}\theta(t,x)\geq r_\e,\] where $r_\e\to 1$ as $\e\to 0$. 
    so that,
    there exists $T_2(\e)>0$ such that,
    for any $t\geq T_2(\e)$, 
    \begin{equation}
        \label{Eq:SurSu}
        \theta(t,\cdot)
        \geq
        1-2\delta_{\e}
        \text{ in }
        \supp(m(t))=[0,s_1+\textstyle{\frac{t}2}],
    \end{equation}
    for some $\delta$ that will be chosen later.

    \emph{Third step: let $\lambda\to0$ with an appropriate Lagrangian.}
    We just constructed $\alpha_{\rm co}$ producing $\th_{\rm co}$
    such that the fishes are invading after some time $T_2(\e,\delta)$ which can be long.
    Since, we are interested in the large time regime,
    we will let $\lambda$ tends to $0$,
    with $c=1$ and $\th$ fixed.
    Recall that $\th'(0)=\lambda_0 L_1'(1)$,
    the only way to reduce $\lambda$ without
    changing $\th$ is by changing the Lagrangian into
    $L_q$ defined by
    \[
        L_q:\alpha\mapsto
        \frac{\underline\eta}2|\alpha|^{2q}
        \;\text{ with }\;
        q=\frac{\lambda_0}{\lambda},
    \]
For any $\lambda \in (0;\lambda_0)$, we have 
\[L_{\frac{\lambda_0}\lambda}(1)=L_1(1)\,, \lambda L_{\frac{\lambda_0}\lambda}'(1)=\lambda_0 L_1'(1).\]
We deduce that $\Th_{c=1,\lambda,L_{\frac{\lambda_0}\lambda}}=\Th_{c=1,\lambda_0,L_1}$ for any $\lambda\in (0;\lambda_0)$. We thus drop the underscript $(c,\lambda,L)$.  Remark~\ref{Re:UniformityInL} implies that $(c=1,\lambda,\Th,\M)$ is a reversed MFG travelling wave for the Lagrangian $L_{\frac{\lambda_0}\lambda}$ for any $\lambda \in (0;\lambda_0)$.
   
    Defining $\cV_{\lambda}$ as in \eqref{Eq:DefVSV},
    we deduce that
    $(1,\lambda,\cV_{\lambda},\M,\Th)$
    is a reversed MFG travelling wave
    as well.
    On the one hand,
    recall that for $x\in[0,s_1]=\supp(\M)$,
    using $L_{q}(1)=\frac{\underline{\eta}}2$,
    we have
    \begin{equation*}
        \cV_{\lambda}(x)
        =
        \lambda^{-1}(\Th(x)
        -\textstyle{\frac{\underline{\eta}}2})
        \leq
        \lambda^{-1}(\Th(s_1)
        -\textstyle{\frac{\underline{\eta}}2}).
    \end{equation*}
    On the other hand,
    letting $\lambda$ tends to zero,
    we have
    \begin{equation*}
        J(\alpha_{\rm co},x,\th)
        \geq
        \lambda^{-1}(1-2\delta)
        +\psi_{\lambda},
        \:\text{ with }\;
        \psi_{\lambda}
        =
        o(\lambda^{-1}).
    \end{equation*}
    Therefore, to conclude, it is sufficient
    to take $\e$ small enough so that
    $(1-2\delta_{\e})>\Th(s_1)$
    and to take $\lambda$ small enough so that
    $\lambda\psi_{\lambda}<1-2\delta_{\e}-\Th(s_1)+\frac{\underline{\eta}}2$.
\end{proof}

\section{Proof of Theorem \ref{Th:exist_nonmono}}
As we outlined in the introduction, the proof of Theorem \ref{Th:exist_nonmono} is very similar to that of Theorem \ref{Th:Main}, so that we will be content with highlighting the main differences, all of them stemming from the non-monotonicity of the profile $\Th$.

We fix, for the rest of this section, an integer $k\in \N^*$ and a couple $(\Th,\M)$ provided by Proposition \ref{Prop:exist_th_multiple_bumps}, such that $\Th$ has $k$ local maxima and $\M$ has an interval as a support. Up to a translation, $\supp(\M)=[0;s_1]$. We begin with two preliminary results.
\paragraph{Control of the Lipschitz constant of the value function} We start off with an estimate of the Lipschitz constant of the value function $\cV$ (which we recall is the value function expressed in similarity variables).

\begin{lemma}
    \label{lem:V_Bernstein}
    For  any $\lambda>0$ and $c\in (0;c_k(\lambda))$, let $(\Th,\M)$ be provided by Proposition \ref{Prop:exist_th_multiple_bumps}.    Let $\cV$ be the associated value function. Then, $\cV$ is Lipschitz continuous a Lipschitz constant bounded from above  by $\lambda^{-1}\|\Th'\|_{\infty}$.
\end{lemma}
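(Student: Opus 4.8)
The plan is to bound the Lipschitz constant of $\cV$ directly from its variational definition~\eqref{Eq:DefVSV}, using the fact that the control problem is invariant under spatial translation of the initial datum in a way that only costs a translation of the trajectory. Let me write out the idea. Fix two initial points $s_0, s_0'\in\R$ and let $\alpha^*$ be an (almost-)optimal control for $\cV(s_0)$, with associated trajectory $s_{\alpha^*}$ solving $s_{\alpha^*}'(t)=\alpha^*(s_{\alpha^*}(t))-c$, $s_{\alpha^*}(0)=s_0$. The subtlety is that the control enters in feedback form, so I cannot simply "shift" $\alpha^*$; instead I would argue at the level of open-loop controls. Rewriting~\eqref{Eq:DefVSV} with open-loop controls $a(t):=\alpha^*(s_{\alpha^*}(t))$, the trajectory is $s_{\alpha^*}(t)=s_0+\int_0^t(a(\tau)-c)\,d\tau$, and using the \emph{same} open-loop control $a$ from the start $s_0'$ produces the trajectory $s_0'+\int_0^t(a(\tau)-c)\,d\tau = s_{\alpha^*}(t)+(s_0'-s_0)$, a rigid translation by $s_0'-s_0$.

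Next I would estimate the difference of payoffs. Since the Lagrangian term $\int_0^\infty e^{-\lambda t}L(a(t))\,dt$ is identical for both trajectories, only the running reward $\int_0^\infty e^{-\lambda t}\Th(\cdot)\,dt$ differs, and
\[
    \cV(s_0') - \cV(s_0)
    \geq
    \int_0^\infty e^{-\lambda t}\bigl(\Th(s_{\alpha^*}(t)+s_0'-s_0)-\Th(s_{\alpha^*}(t))\bigr)\,dt
    \geq
    -\|\Th'\|_\infty\,|s_0'-s_0|\int_0^\infty e^{-\lambda t}\,dt
    =
    -\lambda^{-1}\|\Th'\|_\infty\,|s_0'-s_0|,
\]
where I used that $\Th$ is globally Lipschitz with constant $\|\Th'\|_\infty$ (finite because $\Th'$ equals $\lambda L'(c)$ on $\supp(\M)$, coincides with $\Gamma_{0,c}'$ or a translate of $\Gamma_{1,c}'$ elsewhere, and all these are bounded on the relevant ranges). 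Swapping the roles of $s_0$ and $s_0'$, and letting the optimality gap go to zero, gives $|\cV(s_0')-\cV(s_0)|\leq \lambda^{-1}\|\Th'\|_\infty|s_0'-s_0|$, which is the claim.

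The only point that requires a little care — and which I expect to be the main (minor) obstacle — is the reduction from the feedback formulation to the open-loop one: one must check that a feedback-form optimal control, when its induced open-loop realization is transplanted to a translated initial condition, still yields an admissible control (it does, since any measurable bounded $a(\cdot)$ is admissible in the open-loop relaxation, and the open-loop supremum dominates or equals the feedback one by~\eqref{Eq:opt_cont} and standard results). Alternatively, and perhaps more cleanly, one avoids this entirely by invoking the dynamic programming / HJB characterization: $\cV$ solves $\lambda\cV + c\cV' - H(\cV')=\Th$ in the viscosity sense, $\Th$ is bounded, and a standard comparison-type argument (or the doubling-of-variables technique) yields the Lipschitz bound $\lambda^{-1}\mathrm{osc}(\Th)\leq\lambda^{-1}\|\Th'\|_\infty\cdot(\text{diam of support})$ — but the translation argument above gives the sharper stated constant $\lambda^{-1}\|\Th'\|_\infty$ with no extra work, so that is the route I would take.
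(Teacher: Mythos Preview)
Your argument is correct and takes a genuinely different route from the paper. The paper proceeds via a Bernstein-type PDE estimate: it mollifies the Hamiltonian to $H_{\e_2}$, adds a viscosity term $-\e_1\cV''$ to the HJB equation, differentiates the resulting uniformly elliptic equation once, applies the maximum principle to the equation satisfied by $\cV_{\e_1,\e_2}'$ to obtain $\|\cV_{\e_1,\e_2}'\|_{\infty}\leq\lambda^{-1}\|\Th'\|_{\infty}$, and then passes to the limits $\e_2\to0$ (using elliptic regularity) and $\e_1\to0$ (using stability of viscosity solutions).

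Your approach is more direct and more elementary: you work at the level of the variational formula (in its open-loop form~\eqref{Eq:DefV}, which is the primary definition in the paper; the feedback reformulation~\eqref{Eq:DefVSV} is \emph{a posteriori}, so your ``minor obstacle'' is not really one). You exploit the fact that shifting the initial point by $h$ while keeping the same open-loop control shifts the whole trajectory rigidly by $h$, so only the running reward changes and the difference is controlled by $\|\Th'\|_{\infty}|h|\int_0^\infty e^{-\lambda t}\,dt=\lambda^{-1}\|\Th'\|_{\infty}|h|$. This is the standard translation/comparison argument for Lipschitz regularity of value functions. The paper's PDE route would transfer more readily to problems where the dynamics are not translation-invariant in the state variable, but in the present setting your argument reaches the same constant with essentially no machinery.
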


\begin{proof}
    For any $\e_1\,, \e_2>0$, define $H_{\e_2}$ by
    \begin{equation*}
        H_{\e_2}(p)
        =
        \rho_{\e_2}\star H,
    \end{equation*}
    where $\rho_{\e_2}(p)=\e_2^{-1}\rho(\frac{p}{\e_2})$
    and $\rho$ a smooth even function whose support
    is included in $[-1,1]$, and let $\cV_{\e_1,\e_2}$ be the unique solution of
    \begin{equation}
        \label{eq:V_e12}
        \lambda \cV_{\e_1,\e_2}
        -\e_1 \cV_{\e_1,\e_2}''
        +c\cV_{\e_1,\e_2}'
        -H_{\e_2}(\cV_{\e_1,\e_2}')
        =
        \Th.
    \end{equation} Observe that as $H$ is Lipschitz continuous, $H_{\e_2}$ is Lipschitz continuous, with a Lipschitz constant uniform in $\e_2\to 0$.

    \begin{enumerate}\item\underline{ $L^{\infty}$-bound on $\cV_{\e_1,\e_2}$.}
    Let us rewrite \eqref{eq:V_e12} as
    \begin{equation*}
        \lambda \cV_{\e_1,\e_2}
        -\e_1 \cV_{\e_1,\e_2}''
        +\left(c
        -\int_0^1
        H'_{\e_2}(s\cV_{\e_1,\e_2}')ds
        \right)
        \cV_{\e_1,\e_2}'
        =
        \Th
        +H_{\e_2}(0).
    \end{equation*}
    Since $H$ is nonnegative, so is $H_{\e_2}$.
    Moreover, recall that $H(0)=0$,
    so $H_{\e_2}(0)\leq 1$ for any $\e_2$ small enough. Finally, recall that $0\leq \Th\leq 1$.    The maximum principle implies    \begin{equation*}
        0
        \leq
        \cV_{\e_1,\e_2}
        \leq
        2\lambda^{-1}.
    \end{equation*}
\item\underline{$L^{\infty}$-bound on $\cV'_{\e_1,\e_2}$.}
    Let us differentiate \eqref{eq:V_e12} to get
    \begin{equation*}
        \lambda \cV'_{\e_1,\e_2}
        -\e_1 \cV_{\e_1,\e_2}'''
        +(c
        -H'_{\e_2}(\cV_{\e_1,\e_2}'))
        \cV_{\e_1,\e_2}''
        =
        \Th'.
    \end{equation*}
    Since $H'_{\e_2}$ and $\Th'$ are H\"{o}lder continuous, standard elliptic estimates imply that
    $\cV_{\e_1,\e_2}$ is $C^3$. The maximum principle yields
    \begin{equation}
        \label{eq:V_e12'}
        \|\cV_{\e_1,\e_2}'\|_{\infty}
        \leq
        \lambda^{-1}\|\Th'\|_{\infty}.
    \end{equation}

\item\underline{Taking the limits $\e_2\to 0$, $\e_1\to 0$.}
    Recall that $H$ is Lipschitz continuous,
    so $H_{\e_2}$ is with a Lipschitz constant
    uniform with respect to $\e_2$.
From elliptic regularity, for any $\e_1\,, \e_2>0$, $\cV_{\e_1,\e_2}\in C^{2+\beta}$ for some $\beta>0$,
    uniformly with respect to $\e_2$.
    Therefore, we can extract a $\mathscr C^2_{\mathrm{loc}}$-converging subsequence as $\e_2\to 0$.    Taking the limit in \eqref{eq:V_e12}, it appears that the limit $\cV_{\e_1}$,
is the solution of    \begin{equation*}
        \lambda \cV_{\e_1}
        -\e_1 \cV_{\e_1}''
        +c\cV_{\e_1}'
        -H(\cV_{\e_1}')
        =
        \Th.
    \end{equation*}
From standard results in the theory of viscosity solutions,  $\cV_{\e_1}$ is  $\mathscr C^0_{\mathrm{loc}}$-convergent
    to $\cV$,    where $\cV$ is the unique viscosity solution to
    \begin{equation*}
        \lambda \cV
        +c\cV'
        -H(\cV')
        =
        \Th.
    \end{equation*}
    For any $s,r\in\R$,
    passing to the limit $\e_2\to 0$ in
    \eqref{eq:V_e12'}
    we finally obtain
    \begin{equation*}
        \cV_{\e_1}(s)
        -\cV_{\e_1}(r)
        -\lambda^{-1}\|\Th'\|_{\infty}
        |s-r|
        \leq
        0.
    \end{equation*}
Taking the limit $\e_1\to 0$ concludes the proof.        \end{enumerate}

\end{proof}

\paragraph{A convergence result as $c\to 0$} We now investigate the asymptotic behaviour of $\Gamma_{0,c}$ for small $c$.

\begin{lemma}
    \label{lem:Gamma_cvg}
    Take $k\in\N^*$
    and $c>0$.
    Up to a translation,
    we assume that
    $\Gamma_{0,c}(0)$ is the $k^{\text{th}}$
    local maximum of $\Gamma_{0,c}$. As $c\to 0$, $\Gamma_{0,c}$ $\mathscr C^1_{\mathrm{loc}}$-converges to $\Gamma_{0,0}$, where, up to a translation, $\Gamma_{0,0}$ reaches its global maximum at $s=0$.
    \end{lemma}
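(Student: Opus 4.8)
The plan is to establish $\mathscr C^1_{\mathrm{loc}}$-convergence of the phase-plane trajectories by combining two ingredients: first, that the height of the $k^{\text{th}}$ local maximum of $\Gamma_{0,c}$ converges, as $c\to0$, to $\|\Gamma_{0,0}\|_{L^\infty(\R)}$; and second, continuous dependence of the flow of the first-order system associated with \eqref{Eq:MB} on the initial datum and on the parameter $c$. I would begin by identifying $\Gamma_{0,0}$: for $c=0$ the equation $-\Gamma''=f(\Gamma)$ is conservative, the energy $E$ of \eqref{Eq:Energy} being exactly preserved, so, since $E(0,0)=F(0)=0$, the unstable manifold of $(0,0)$ stays on the level set $\{E=0\}$, which, away from the saddle $(0,0)$ itself, is precisely the homoclinic loop encircling $(\eta,0)$. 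Along this loop $\Gamma'=0$ only where $F(\Gamma)=0$, that is, only at $\Gamma=0$ (the saddle, attained as $s\to\pm\infty$) and at $\Gamma=\gamma^*$, the unique zero of $F$ in $(\eta,1)$ — which exists and lies in $(\eta,1)$ because $F(\eta)<0<F(1)=\int_0^1 f$ (by \eqref{Eq:invasive_BNL}) and $F$ is strictly increasing on $[\eta,1]$. Hence $\Gamma_{0,0}$ has a single local maximum, of height $\gamma^*=\|\Gamma_{0,0}\|_{L^\infty(\R)}$, and the normalization in the statement is the one for which $(\Gamma_{0,0}(0),\Gamma_{0,0}'(0))=(\gamma^*,0)$. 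For $c>0$ small, $c<2\sqrt{f'(\eta)}$, so by Lemma~\ref{Le:Stab2} the curve $\Gamma_{0,c}$ spirals into $(\eta,0)$ and has infinitely many local maxima; writing $M_k(c)$ for the height of the $k^{\text{th}}$ one, the translation of the statement gives $(\Gamma_{0,c}(0),\Gamma_{0,c}'(0))=(M_k(c),0)$, and evaluating \eqref{Eq:MB} at $s=0$ forces $f(M_k(c))=-\Gamma_{0,c}''(0)\geq0$, hence $M_k(c)\in(\eta,1)$.

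\emph{Step 1 ($M_k(c)\to\gamma^*$).} The idea is an energy argument. Set $e_c(s):=E(\Gamma_{0,c}(s),\Gamma_{0,c}'(s))=\tfrac12(\Gamma_{0,c}'(s))^2+F(\Gamma_{0,c}(s))$; as in \eqref{Eq:EnergyDissipates}, $e_c'=-c\,(\Gamma_{0,c}')^2\leq0$ and $e_c(-\infty)=F(0)=0$, so $e_c\leq0$ on $\R$, and evaluating at $s=0$ gives
\[
0\;\leq\;-F(M_k(c))\;=\;c\int_{-\infty}^{0}(\Gamma_{0,c}')^2\,ds .
\]
The crux is to bound this integral \emph{uniformly} in $c$. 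On $(-\infty,0]$ the function $\Gamma_{0,c}$ is piecewise strictly monotone, with exactly $2k-1$ monotone branches (the initial rise to $M_1(c)$, then alternating descents and ascents up to the $k^{\text{th}}$ maximum, at $s=0$); on each branch the substitution $u=\Gamma_{0,c}(s)$ turns $\int(\Gamma_{0,c}')^2\,ds$ into $\int|\Gamma_{0,c}'|\,du$, and since $e_c\leq0$ gives $(\Gamma_{0,c}')^2\leq-2F(\Gamma_{0,c})\leq2\max_{[0,1]}(-F)$ while $u$ stays in $[0,1]$, each branch contributes at most $\sqrt{2\max_{[0,1]}(-F)}$. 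Therefore
\[
0\;\leq\;-F(M_k(c))\;\leq\;c\,(2k-1)\sqrt{2\max_{[0,1]}(-F)}\;\xrightarrow[c\to0]{}\;0 ,
\]
and since $F$ is a homeomorphism of $[\eta,1]$ onto $[F(\eta),F(1)]$ with $F(\gamma^*)=0$ and $M_k(c)\in(\eta,1)$, this yields $M_k(c)=F^{-1}(F(M_k(c)))\to F^{-1}(0)=\gamma^*$.

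\emph{Step 2 (conclusion).} Let $\Phi_c(s;u_0,p_0)=(u(s),u'(s))$ be the solution of $-u''-cu'=f(u)$ with $(u(0),u'(0))=(u_0,p_0)$. Its vector field $(u,p)\mapsto(p,-f(u)-cp)$ is locally Lipschitz in $(u,p)$, uniformly for $c$ bounded (as $f\in\mathscr C^1$), so standard continuous dependence gives that $\Phi_c(s;u_0,p_0)$ depends continuously on $(s,u_0,p_0,c)$, uniformly for $s$ in compact intervals on which the limit solution exists. Since $(\Gamma_{0,c}(s),\Gamma_{0,c}'(s))=\Phi_c(s;M_k(c),0)$, since $(\Gamma_{0,0}(s),\Gamma_{0,0}'(s))=\Phi_0(s;\gamma^*,0)$ is globally defined (it is the homoclinic pulse identified above), and since $M_k(c)\to\gamma^*$ by Step 1, we obtain $\sup_{s\in K}|\Phi_c(s;M_k(c),0)-\Phi_0(s;\gamma^*,0)|\to0$ for every compact $K\subset\R$; reading off the two components, this is precisely $\Gamma_{0,c}\to\Gamma_{0,0}$ in $\mathscr C^1(K)$ for every compact $K$, as claimed.

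\emph{Main obstacle.} The only genuinely delicate point is the uniform-in-$c$ bound in Step 1: as $c\to0$ the spiral unwinds, so $(-\infty,0]$ carries more and more, and slower and slower, turns, and a naive (length)$\times$(sup) estimate of $\int_{-\infty}^0(\Gamma_{0,c}')^2$ is hopeless. The fix is to reparametrize by the bounded variable $u=\Gamma_{0,c}$: each of the $2k-1$ monotone branches preceding the $k^{\text{th}}$ maximum then contributes an $O(1)$ amount, so the integral stays bounded and the dissipated energy $-F(M_k(c))=c\int_{-\infty}^0(\Gamma_{0,c}')^2$ is $O(c)$. The remaining ingredients — the Hamiltonian phase portrait at $c=0$ and continuous dependence of ODE flows — are entirely standard.
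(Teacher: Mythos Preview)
Your proof is correct and follows a genuinely different route from the paper's. The paper argues by monotonicity and compactness: adapting the phase-plane comparison of Lemma~\ref{Le:Galere1} it observes that $c\mapsto\Gamma_{0,c}(0)$ is monotone, hence convergent to some $\ell_0>0$; Schauder estimates then give a $\mathscr C^1_{\mathrm{loc}}$ subsequential limit $\gamma_0$ solving $-\gamma_0''=f(\gamma_0)$ with $\gamma_0'(0)=0$, $\gamma_0(0)=\ell_0$, and the identification $\gamma_0=\Gamma_{0,0}$ is asserted as a consequence. You instead prove directly, and quantitatively, that the height $M_k(c)$ of the $k^{\text{th}}$ maximum satisfies $-F(M_k(c))=c\int_{-\infty}^0(\Gamma_{0,c}')^2\,ds=O(c)$, by splitting $(-\infty,0]$ into its $2k-1$ monotone branches and reparametrizing each by $u=\Gamma_{0,c}$; the conclusion $M_k(c)\to\gamma^*$ then follows from the invertibility of $F$ on $[\eta,1]$, and $\mathscr C^1_{\mathrm{loc}}$-convergence is pure continuous dependence of the flow on initial data and parameters.

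Your approach buys two things. First, it is self-contained: you do not need to revisit and adapt the comparison arguments of Lemma~\ref{Le:Galere1} (which were written for $\Gamma'_{0,\max}$, not for the $k^{\text{th}}$ value-maximum). Second, it yields an explicit rate $|F(M_k(c))|\leq(2k-1)\sqrt{2\max_{[0,1]}(-F)}\,c$, which is more informative than mere convergence. The paper's route, in exchange, leverages machinery already in place and is shorter on the page; but its identification step (ruling out that the compactness limit is a periodic orbit around $(\eta,0)$ rather than the homoclinic) is left implicit, whereas in your argument this issue never arises because you pin down $M_k(c)\to\gamma^*$ before invoking any limiting procedure.
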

\begin{proof}[Proof of Lemma \ref{lem:Gamma_cvg}]
By continuity of the solutions of an ODE with respect to its parameters, it is enough to show (recall \eqref{Eq:Gamma0Max}) that\begin{equation}\Gamma_{0,\max}(c)\underset{c\to 0}\to \Gamma_{0,\max}(0).\end{equation} However, adapting the arguments of Lemma \ref{Le:Galere1} we see that $c\mapsto \Gamma_{0,c}(0)$ is non-decreasing. In particular, we deduce that $\Gamma_{0,c}(0)\to \ell_0$ as $c\to 0$, where $\ell_0>0$.  From Schauder estimates, $\Gamma_{0,c}$ converges in $\mathscr C^1_{\mathrm{loc}}(\R)$ to a solution $\gamma_0$ of the same equation as $\Gamma_{0,0}$, with $\gamma_0'(0)=0\,, \gamma_0(0)>0$. We deduce that $\gamma_0=\Gamma_{0,0}$ and that $\ell_0=\Gamma_{0,\max}(0)$.

\end{proof}

We are now in a position to prove Theorem \ref{Th:exist_nonmono}.

  \begin{proof}[Proof of Theorem \ref{Th:exist_nonmono}] 
   Most of the arguments in the present proof
    will be repeated from the proof of Theorem \ref{Th:Main}. We are thus satisfied with highlighting the salient differences.
    
    We let $\lambda>0$ and $c\in(0,c_k(\lambda))$
    and we consider the couple $(\Th,\M)$ provided
    by Proposition \ref{Prop:exist_th_multiple_bumps}.
    Up to a translation, we assume that
    $\inf(\supp(\M))=0$.
    From Lemma \ref{lem:V_Bernstein},   there exists $\underline{c}<0$,
    such that an optimal control $\alpha^*$
    has to satisfies the a priori estimates
    $\alpha^*\geq \underline{c}$.
  Using arguments akin to those employed in the course of Theorem \ref{Th:Main}    \begin{equation}
        \underline{c}
        \leq
        \alpha
        <
        c,
    \end{equation}
    and that satisfy $T_1(\alpha)<\infty$,
    where $T_1(\alpha)$, $s_-(\lambda,c)$
    and $\Tht$ are defined as in the proof
    of Theorem \ref{Th:Main}.
    Define $\overline{T}(\alpha)$ by
    \begin{equation*}
        \overline{T}(\alpha)
        =
        \inf\left\{t\geq0,
        \Th(s_{\alpha}(t))=\frac{\Gamma_{0,\max}(0)}2\right\}.
    \end{equation*}
We obtain, as in Theorem \ref{Th:Main},    \begin{multline*}
        \cVt(s^{\alpha}(\overline{T}(\alpha)))
        -J(s^{\alpha}(\overline{T}(\alpha)),\alpha)
        \geq
        e^{-\lambda(T_1(\alpha)-\overline{T}(\alpha))}
        \Bigl(
        \cVt(s^{\alpha}(T_{1}(\alpha)))
        -J(s^{\alpha}(T_{1}(\alpha)),\alpha)
        \Bigr)
        \\
        +\int_{\overline{T}(\alpha)}^{T_1(\alpha)}
        e^{-\lambda (t-\overline{T}(\alpha))}
        \Bigl(\Tht(s^{\alpha}(t))
        -\Th(s^{\alpha}(t))
        \Bigr)\,dt.
    \end{multline*}
    Then, observe that
    \begin{equation*}
        \cVt(s_{\alpha}(T_1))
        \geq
        -\lambda^{-1}L(c)
        \,\text{ and }\,
        J(s_{\alpha}(T_1))
        \leq
        \lambda^{-1}.
    \end{equation*}
    This implies
    \begin{equation*}
        e^{\lambda \overline{T}(\alpha)}
        (J(s_0,c)
        -J(s_0,\alpha))
        \geq
        -\lambda^{-1}(1+L(c))e^{-\lambda(T_1-\overline{T})}
        +\int_{\overline{T}(\alpha)}^{T_1(\alpha)}
        e^{-\lambda (t-\overline{T}(\alpha))}
        \Bigl(\Tht(s^{\alpha}(t))
        -\Th(s^{\alpha}(t))
        \Bigr)\,dt.
    \end{equation*}
    Observe that
    \begin{equation*}
        T_1-\overline{T}
        \geq
        \frac{\Gamma_{0,\max}(0)
             -2s_-(\lambda,c)}{2(c-\underline{c})},
    \end{equation*}
    with $\lim_{c\to0}s_-(\lambda,c)=-\infty$
    using Lemma \ref{lem:Gamma_cvg}.
    This implies that
    \begin{align*}
        e^{\lambda \overline{T}}
        (J(s_0,c)
        -J(s_0,\alpha))
        &\geq
        -\lambda^{-1}(1+L(c))e^{-\lambda(T_1-\overline{T})}
        +\int_{\overline{T}}^{\overline{T}+1}
        e^{-\lambda (t-\overline{T}(\alpha))}
        \Bigl(\Tht(s^{\alpha}(t))
        -\Th(s^{\alpha}(t))
        \Bigr)\,dt
        \\
        &\geq
        -\lambda^{-1}(1+L(c))e^{-\lambda(T_1-\overline{T})}
        +e^{-\lambda}\frac{\Gamma_{0,\max}(0)}4,
    \end{align*}
    where the last line is obtained for $c$ small enough.
    It is then straightforward to conclude.

\end{proof}

\section{Conclusion and open problems}
\label{sec:conclusion}
In this work, we have proposed a MFG-TW approach to the tragedy of the commons, and we would like to comment on one aspect, before moving on to open questions and further comments: throughout the article we have considered a bistable nonlinearity, that is used to model the (strong) Allee effect. Other nonlinearities are of paramount importance in spatial ecology, most notable monostable non-linearities. A first comment we would like to address is that of the possibility to extend our results to this setting (or obstructions to doing so). If we consider a monostable nonlinearity $f:[0;1]\to \R$ with $f(0)=f(1)=0\,, f>0\text{ in }(0;1)$ and such that $f'(0)>0$, no reversed traveling wave can be built, as can easily be infered from the associated phase portrait. However, if $f'(0)=0$ (such a case is often referred to as \emph{degenerate monostable}, and is used to model the weak Allee effect), a careful study of the phase portrait show that it is possible to build reversed travelling waves, and conclusions similar to that of Theorem \ref{Th:Quadratique} can be obtained. Overall,  it thus appear that, at least at a paradigmatic level, \emph{it is the combination of both the Allee effect and of te harvesting game structure that drives the population to extinction}.

On the same note, observe that the conclusions of Theorem \ref{Th:Quadratique} are valid whatever the precise shape of the non-linearity is, as long as a reversed travelling wave with a bounded second-order derivative can be built (which of course is not the case for non-degenerate monostable non-linearities, as explained above). 

Several questions that are as crucial from the applications' perspectives as they seem out of reach from the mathematical point of view remain omen. The three main of these questions are the following:
\begin{enumerate}
\item The first one is the \emph{stability} of the reversed MFG travelling waves we obtained. Most proofs of the stability of travelling waves for bistable propagation fronts rely on a specific structure of the PDE (typically, a variational structure, see \cite{Gallay_2007,Risler_2008} for a general approach. We also refer to \cite[Chapter 5]{Volpert_1994}). Here, the approach one should adopt is unclear, but the question of knowing whether or not reversed MFG travelling waves or (locally or globally) stable is quite important.
\item Let us stress a point related to the tragedy of the commons: we exhibited a particular coordinated strategy that globally outperforms the optimal MFG one while ensuring survival of the fishes' population. However, this was proved for a specific choice of Lagrangian. A natural question is: is it in general true that for a fixed Lagrangian $L$ and for a small enough discount factor $\lambda>0$ there always exists a coordinated strategy that does not extinguish the fishes' population while guaranteeing that each individual player actually harvests more fishes than he would in a competitive setting? We believe the answer to this question is affirmative, but a proof is at this point out of reach. 
\item Finally, let us comment on government regulations on fishing. Typically, governments enforce regulations on the number of catches to avoid overfishing, and they can also prohibit harvesting in certain protected zones. This leads to two qualitatively important questions: first, which regulations should a government impose so as to rule out these reversed MFG traveling waves? Second, in the construction of our reversed MFG traveling waves, each fisherman is fishing in the zone of transition between 0 and 1, and, were they to fish at the back of the front (where $\Th\approx 1$), they ould harvest as much fish, while not causing the extinction of the fishes. Thus, what are the best strategies, in terms of designing ``harvest-free" zones?\end{enumerate}

\bibliographystyle{alpha}
\bibliography{BiblioNash}

\end{document}